\newcommand{\bburl}[1]{\textcolor{blue}{\url{#1}}}
\newtheorem{thm}{Theorem}[section]
\newtheorem{claim}[thm]{Claim}
\newtheorem{lem}[thm]{Lemma}
\newtheorem{prop}[thm]{Proposition}
\newtheorem{exa}[thm]{Example}
\newtheorem{defi}[thm]{Definition}
\newtheorem{rek}[thm]{Remark}
\DeclareMathOperator{\supp}{supp}
\numberwithin{equation}{section}
\begin{document}

\title{On Some Characterizations of Greedy-type Bases}

\author{Pablo M. Bern\'a}
\author{H\`ung Vi\d{\^e}t Chu}

\email{\textcolor{blue}{\href{mailto:pablo.berna@cunef.edu}{pablo.berna@cunef.edu}}}
\address{Departamento de Métodos Cuantitativos, CUNEF Universidad, 28040 Madrid, Spain}

\email{\textcolor{blue}{\href{mailto:hungchu2@illinois.edu}{hungchu2@illinois.edu}}}
\address{Department of Mathematics, University of Illinois at Urbana-Champaign, Urbana, IL 61820, USA}

\begin{abstract}
    In 1999, S. V. Konyagin and V. N. Temlyakov introduced the so-called Thresholding Greedy Algorithm. Since then,
    there have been many interesting and useful characterizations of greedy-type bases in Banach spaces. 
    In this article, we study and extend several characterizations of greedy and almost greedy bases in the literature. Along the way, we give various examples to complement our main results. Furthermore, we propose a new version of the so-called Weak Thresholding Greedy Algorithm (WTGA) and show that the convergence of this new algorithm is equivalent to the convergence of the WTGA.
\end{abstract}

\subjclass[2020]{41A65; 46B15}

\keywords{Thresholding greedy algorithm; greedy bases; almost greedy bases}

\thanks{The first author was supported by Grants PID2019-105599GB-100 (Agencia Estatal de Investigación, Spain) and 20906/PI/18 from Fundación Séneca (Región de Murcia, Spain). }

\thanks{The authors are thankful to Timur Oikhberg for useful feedback on the earlier draft of this paper. The authors would like to thank the anonymous referee for helpful suggestions.}

\maketitle

\tableofcontents

\section{Introduction}
Let $\mathbb{X}$ be an infinite-dimensional real Banach space equipped with the norm $\|\cdot\|$. In this article, we focus solely on Schauder bases and the underlying field $\mathbb{R}$ though some results may hold in a more general setting. Let $\mathcal{B} = (e_n)_{n=1}^\infty$ be a semi-normalized Schauder basis of $\mathbb{X}$ with basis constant $\mathbf K_b$; that is, 
\begin{enumerate}
    \item there exists a unique dual basis $\mathcal{B}^* = (e_n^*)_{n=1}^\infty \subset \mathbb{X}^*$, where $e_n^*(e_m) = \delta_{n,m}$ for all $m, n\in\mathbb{N}$ and the functionals $(e_n^*)_{n=1}^\infty$ are called the biorthogonal functionals,  
    \item there are two positive constants $c_1, c_2$ such that $c_1 \le \|e_n\|, \|e_n^*\|\le c_2$ for every $n\in\mathbb{N}$,
    \item the \textbf{partial sum projections} of order $N$ given by 
    $$S_N(x) \ :=\ \sum_{n=1}^N e_n^*(x)e_n$$
    satisfy $\lim_{N\rightarrow \infty} S_N(x) = x$ for every $x\in \mathbb{X}$. Furthermore, $\mathbf K_b$ is the smallest positive constant such that $\|S_N(x)\|\le \mathbf K_b \|x\|$ for all $x\in \mathbb{X}$ and $N\in\mathbb{N}$. 
\end{enumerate}

Since $\mathcal{B}$ is semi-normalized, for every $x\in \mathbb{X}$, we have $\lim_{n\rightarrow\infty} |e_n^*(x)| = 0$. The support of  $x\in \mathbb{X}$ is the set $\supp (x) = \{n\in \mathbb{N}: e_n^*(x)\neq 0\}$, and the set of finitely supported vectors is denoted by $\mathbb{X}_c$.

\subsection{Classical results}
A natural way to approximate $x$ by finite sums is to consider $S_N(x)$ for $N\ge 1$. It is easy to show that 
$$\|x-S_N(x)\|\ \le\ (1+\mathbf K_b)\inf_{a_n\in\mathbb R}\left\|x-\sum_{n=1}^N a_n e_n\right\|.$$ 
Though this method of approximation is convenient, such a level of accuracy is not always satisfying. To improve the level of accuracy, S. N. Konyagin and V. N. Temlyakov \cite{KT1} introduced the \textbf{Thresholding Greedy Algorithm}, where this algorithm selects the largest coefficients (in modulus) for our approximation. In particular, for each $x$, they defined the greedy ordering $\rho: \mathbb{N}\rightarrow \mathbb{N}$ such that $\rho$ is injective, $\supp (x)\subset \rho(\mathbb{N})$, and if $j < k$, then either $|e^*_{\rho(j)}(x)| > |e^*_{\rho(k)}(x)|$ or $|e^*_{\rho(j)}(x)| = |e^*_{\rho(k)}(x)|$ and $\rho(j) < \rho(k)$. The \textbf{greedy sum} of $x$ of order $m$, denoted by $\mathcal{G}_m(x)$, is 
$$\mathcal{G}_m(x): = \sum_{n=1}^m e^*_{\rho(n)}(x)e_{\rho(n)},$$
and $\Lambda_m(x): = \{\rho(1), \ldots, \rho(m)\}$ is called the \textbf{natural order-$m$ greedy set} of $x$. 

\begin{defi}[\cite{KT1}]\normalfont
A basis $\mathcal{B}$ in a Banach space $\mathbb X$ is \textbf{greedy} if there exists a constant $C\ge 1$ such that
\begin{equation}\label{e6}\left\|x-\mathcal{G}_m(x)\right\| \ \le\ C\inf\left\{\left\|x-\sum_{n\in A}a_ne_n\right\| \,:\, |A| = m, a_n\in \mathbb{R}\right\},\forall x\in \mathbb{X}, m\in \mathbb{N}.\end{equation}
If $C$ satisfies \eqref{e6}, we say $\mathcal{B}$ is $C$-greedy. For conciseness, set $$\sigma_m(x) := \inf\left\{\left\|x-\sum_{n\in A}a_ne_n\right\| \,:\, |A| = m, a_n\in \mathbb{R}\right\}.$$
\end{defi}

Note that the natural order-$m$ greedy set is uniquely determined; however, we can also define an order-$m$ greedy set (not necessarily unique) of $x$ as any set $G\subset\mathbb{N}$ (with $|G| = m$) satisfying $$\min_{n\in G}|e_n^*(x)| \ \ge\ \max_{n\notin G}|e_n^*(x)|.$$ Clearly, $x$ may have several greedy sets of the same order when $|e_j^*(x)| = |e_k^*(x)|$ for some $j\neq k$.  The corresponding greedy sum is 
$G_m(x): = \sum_{n\in G}e_n^*(x)e_n$.

Definition \eqref{e6} of greedy bases are due to S. V. Konyagin and V. N. Temlyakov \cite{KT1}, who also gave a beautiful characterization. 

\begin{thm}[Konyagin and Temlyakov \cite{KT1}]\label{KT1}
A basis $\mathcal{B}$ of a Banach space $\mathbb{X}$ is greedy if and only if
it is unconditional and democratic.
\end{thm}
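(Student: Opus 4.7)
My plan is to handle the two implications separately. For the forward direction, I first prove the suppression form of unconditionality: for any finitely supported $x=\sum_{n\in F}a_ne_n$ and any $A\subseteq F$, the coordinate projection $P_A x:=\sum_{n\in A}a_ne_n$ satisfies $\|P_A x\|\leq C\|x\|$, where $C$ is the greedy constant. The key device is to apply \eqref{e6} to the ``booster'' vector
\[z\;:=\;x+t\sum_{n\in F\setminus A}\mathrm{sign}(a_n)\,e_n,\qquad t\;>\;\max_{n\in F}|a_n|,\]
whose coefficients on $F\setminus A$ strictly dominate (in modulus) those on $A$. This forces $\Lambda_{|F\setminus A|}(z)=F\setminus A$, so $z-\mathcal{G}_{|F\setminus A|}(z)=P_Ax$; meanwhile the $|F\setminus A|$-term competitor $\sum_{n\in F\setminus A}t\,\mathrm{sign}(a_n)\,e_n$ shows $\sigma_{|F\setminus A|}(z)\leq\|x\|$. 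Inequality \eqref{e6} then yields $\|P_A x\|\leq C\|x\|$, and suppression unconditionality is equivalent to unconditionality up to a constant factor.

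For democracy, I start with \emph{disjoint} sets $A,B$ of equal size $m$. Apply \eqref{e6} to $y:=\sum_{n\in A}e_n+(1-\varepsilon)\sum_{n\in B}e_n$: its natural order-$m$ greedy set is $A$, so the greedy error is $(1-\varepsilon)\|\sum_{n\in B}e_n\|$, while the $m$-term competitor $(1-\varepsilon)\sum_{n\in B}e_n$ gives $\sigma_m(y)\leq\|\sum_{n\in A}e_n\|$. Passing $\varepsilon\downarrow 0$ and symmetrizing yields the disjoint, equal-size democratic inequality. The general case $|A|\leq|B|$ with possibly overlapping sets follows by routing through an auxiliary set $D$ with $|D|=|A|$ disjoint from $A\cup B$, and through any subset $A'\subseteq B$ of size $|A|$, combining the disjoint-democratic estimate with the unconditionality already established.

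For the reverse direction, assume $\mathcal{B}$ is unconditional with constant $K_u$ and democratic with some constant $\Delta$. Fix $x\in\mathbb{X}_c$, $m\geq 1$, and a competitor $y=\sum_{n\in A}b_ne_n$ with $|A|=m$; let $\Lambda=\Lambda_m(x)$. Decompose the greedy error as
\[x-\mathcal{G}_m(x)\;=\;P_{(A\cup\Lambda)^c}(x)+P_{A\setminus\Lambda}(x).\]
The first summand is supported off $A$, so by unconditionality its norm is controlled by $\|x-y\|$ (since $x$ and $x-y$ agree off $A$). For the second summand I exploit the defining property of the greedy set: $|e_n^*(x)|\leq M:=\min_{k\in\Lambda}|e_k^*(x)|$ for every $n\in A\setminus\Lambda$. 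Use unconditionality to replace the true coefficients on $A\setminus\Lambda$ by $M$ with the corresponding signs, then democracy to trade $A\setminus\Lambda$ for the equal-sized set $\Lambda\setminus A$, and unconditionality once more to reinstate the actual coefficients on $\Lambda\setminus A$ (each of modulus at least $M$). This reduces the bound to $\|P_{\Lambda\setminus A}(x)\|$, which is at most $K_u\|x-y\|$ since $\Lambda\setminus A\subseteq A^c$.

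The genuine obstacle is the ``coefficient swap'' on $A\triangle\Lambda$ in the reverse direction. Democracy is the precise tool that converts the uniform \emph{size} control furnished by the greedy selection (every kept coefficient dominates every discarded one in modulus) into a \emph{norm} comparison between the $A\setminus\Lambda$ block and the $\Lambda\setminus A$ block; orchestrating this three-step swap so that the constants remain sharp is the main technical task. The remaining work is careful bookkeeping and verifying that the auxiliary vectors in the forward direction ($z$ and $y$) have unambiguous natural greedy sets, which is exactly why the strict inequality $t>\max|a_n|$ and the perturbation parameter $\varepsilon>0$ appear.
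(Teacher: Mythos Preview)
The paper does not contain its own proof of Theorem~\ref{KT1}; it merely states the result with attribution to~\cite{KT1} and uses it as a black box. So there is no in-paper argument to compare your proposal against.

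That said, your proposal is correct and is essentially the original Konyagin--Temlyakov proof. A small point worth making explicit in the reverse direction: after you replace the coefficients on $A\setminus\Lambda$ by $M\cdot\mathrm{sign}(e_n^*(x))$, you are left with a \emph{signed} indicator $M\,1_{\varepsilon(A\setminus\Lambda)}$, whereas democracy as stated in~\eqref{e110901} compares only unsigned indicators $1_A$, $1_B$. You need one extra application of unconditionality to pass from $1_{\varepsilon(A\setminus\Lambda)}$ to $1_{A\setminus\Lambda}$ before invoking democracy, and another to pass from $1_{\Lambda\setminus A}$ to $1_{\eta(\Lambda\setminus A)}$ afterward; this costs a factor of $\mathbf K_u^2$ in the constant but does not affect the logic. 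Your three-step description (``unconditionality, then democracy, then unconditionality'') is compatible with this, but the phrase ``with the corresponding signs'' suggests you may be tacitly using super-democracy rather than plain democracy; make the extra sign-removal step explicit. Also, you restrict to $x\in\mathbb{X}_c$; a one-line density remark (or the observation that $\sigma_m$ and $\mathcal G_m$ behave well under approximation) is needed to pass to general $x$.
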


For completeness, we give the definition of unconditional and democratic bases.

\begin{defi}\normalfont A basis $\mathcal{B}$ in a Banach space $\mathbb X$ is \textbf{unconditional} if for each $x\in \mathbb{X}$, the series $\sum_{n=1}^\infty e_n^*(x)e_n$
converges unconditionally. 
\end{defi}

A notable characterization of an unconditional basis $\mathcal{B} = (e_n)_{n=1}^\infty$ is that there exists a constant $K$ such that for all $N\in\mathbb{N}$, $$\left\|\sum_{n=1}^Na_ne_n\right\|\ \le\ K\left\|\sum_{n=1}^N b_n e_n\right\|,$$
whenever $|a_n|\le |b_n|$ for all $1\le n\le N$. The least constant $K$ is called the \textbf{unconditional constant} of $\mathcal{B}$ and is denoted by $\mathbf K_u$.

Another useful characterization of unconditionality is the existence of the smallest constant $\mathbf K_{su}$ (called the \textbf{suppression-unconditional constant}) that satisfies $\|P_A(x)\|\le \mathbf K_{su}\|x\|$ for all $A\subset\mathbb{N}$, where $P_A$ is the projection operator, that is, if $A$ is a finite set,
$$P_A(x)=\sum_{n\in A}e_n^*(x)e_n.$$

\begin{defi}[\cite{KT1}]\normalfont A basis $\mathcal B$ in a Banach space is  \textbf{$C$-democratic} if there is a positive constant $C$ such that
\begin{equation}\label{e110901} \|1_A\|\ \le\ C\|1_B\|,\end{equation}
for all finite sets $A, B\subset \mathbb{N}$ with $|A| = |B|$, where for a given finite set $A\subset \mathbb{N}$,
$$1_A\ : =\ \sum_{n\in A}e_n.$$
\end{defi}
\begin{rek}\normalfont
For a given $\varepsilon = (\varepsilon_n)\in \{\pm 1\}^{\mathbb{N}}$ and a finite set $A$, we can extend $1_A$ to $1_{\varepsilon A}$ as follows:
$$1_{\varepsilon A}\ =\ \sum_{n\in A}\varepsilon_n e_n.$$
\end{rek}

S. V. Konyagin and V. N. Temlyakov \cite{KT1} also defined \textbf{quasi-greedy} bases, which satisfy $$\lim_{m\rightarrow\infty}\mathcal{G}_m(x) = x, \forall x\in\mathbb{X}.$$ Later, Wojtaszczyk \cite{W} characterized quasi-greedy bases in terms of the uniform boundedness of $(\mathcal{G}_m)_{m=1}^\infty$, which is surprising as these maps are neither linear nor continuous. In particular, a basis is quasi-greedy if and only if there exists a constant $C>0$ such that
\begin{equation}\label{e110801}\|\mathcal{G}_m(x)\|\ \le\ C\|x\|, \forall x\in\mathbb{X}, \forall m\in\mathbb{N}.\end{equation}
We shall call the smallest $C$ in \eqref{e110801} the quasi-greedy constant, denoted by $\mathbf C_q$. By a perturbation argument, \eqref{e110801} can be shown to hold for all greedy sums $G_m(x)$ with the same quasi-greedy constant (see \cite[Lemma 10.2.6]{AK}.)

One may argue that it is unfair to compare the two sides in \eqref{e6}: on the left side, $\mathcal{G}_m(x)$ is a projective approximation, while the right side allows arbitrary coefficients $a_n$. To remedy this, S. J. Dilworth, N. J. Kalton, D. Kutzarova, and V. N. Temlyakov \cite{DKKT} introduced the concept of almost greedy bases.

\begin{defi}[\cite{DKKT}]\normalfont A basis $\mathcal B$ in a Banach space $\mathbb X$ is \textbf{almost greedy} if there exists a constant $C\ge 1$ such that
\begin{equation}\label{e7}\|x-\mathcal{G}_m (x)\|\ \le\ C\tilde{\sigma}_m(x), \forall x\in \mathbb{X}, m\in \mathbb{N},\end{equation}
where $\tilde{\sigma}_m(x) = \inf\{\|x-P_A(x)\|: |A| = m\}$. If $C$ verifies \eqref{e7}, then $\mathcal{B}$ is said to be $C$-almost greedy. 
\end{defi}
It is easy to see that if a basis is greedy, then it is almost greedy. The converse is not true (see \cite[Example 10.5.4]{AK}).  In the spirit of Theorem \ref{KT1}, S. J. Dilworth et al. \cite{DKKT} showed that a basis is almost greedy if and only if it is quasi-greedy and democratic. The following theorem gives another characterization of almost greedy bases.

\begin{thm}[Dilworth, Kalton, Kutzarova, and Temlyakov \cite{DKKT}]\label{dkkt03}
Let $\mathcal B$ be a basis in a Banach space $\mathbb X$. The following are equivalent:
\begin{enumerate}
    \item [(i)] $\mathcal{B}$ is almost greedy.
    \item [(ii)] For all $\lambda>1$, there exists $C_\lambda$ such that 
    \begin{equation}\label{e110703}\|x-\mathcal{G}_{\lceil \lambda m\rceil}\| \ \le\ C_\lambda \sigma_m (x), \forall m\in \mathbb{N}, \forall x\in \mathbb{X}.\end{equation}
    \item [(iii)] For some $\lambda > 1$, there exists $C$ such that 
    \begin{equation*}\|x-\mathcal{G}_{\lceil \lambda m\rceil}(x)\|\ \le\ C \sigma_m(x), \forall m\in \mathbb{N}, \forall x\in \mathbb{X}.\end{equation*}
\end{enumerate}
\end{thm}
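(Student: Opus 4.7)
\emph{Proof plan.} The plan is to prove the cycle (i) $\Rightarrow$ (ii) $\Rightarrow$ (iii) $\Rightarrow$ (i); the middle implication (ii) $\Rightarrow$ (iii) is immediate by specializing (ii) to any fixed $\lambda > 1$.

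For (i) $\Rightarrow$ (ii), fix $\lambda > 1$ and set $k = \lceil \lambda m\rceil$. Applying the almost-greedy inequality at order $k$ yields $\|x - \mathcal G_k(x)\| \le C \tilde\sigma_k(x)$, so it suffices to show $\tilde\sigma_k(x) \le C_\lambda \sigma_m(x)$. Given any competitor $\sum_{n \in A}a_n e_n$ with $|A| = m$, I would set $y := x - \sum_{n \in A}a_n e_n$ and let $D \subseteq A^c$ collect the $k-m$ indices where $|e_n^*(y)|$ is largest within $A^c$. For $B := A \cup D$, since $\supp(x - y) \subseteq A \subseteq B$, a direct coefficient-level check gives
\begin{equation*}
    x - P_B(x) \ =\ y - P_B(y) \ =\ P_{A^c}(y) - \mathcal G_{k-m}\bigl(P_{A^c}(y)\bigr),
\end{equation*}
whose norm is at most $(1 + \mathbf{C}_q)\|P_{A^c}(y)\|$ by the quasi-greediness of $\mathcal B$ (inherited from (i)). To close the chain one needs $\|P_{A^c}(y)\| \le C'\|y\|$, which I would obtain from the democracy of $\mathcal B$ together with a truncation argument.

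For (iii) $\Rightarrow$ (i), I would invoke the characterization from \cite{DKKT} that almost greedy is equivalent to quasi-greedy together with democratic, reducing the task to deriving both properties from (iii). Quasi-greediness is nearly immediate: (iii) yields $\|\mathcal G_k(x)\| \le (1+C)\|x\|$ whenever $k = \lceil \lambda m\rceil$ for some $m$, and for arbitrary $n$ the gap to the nearest such $k$ is at most $\lambda + 1$, so the missing greedy terms contribute at most $(\lambda+1)c_2^2\|x\|$ by semi-normalization. For democracy, I would apply (iii) to $z := 1_A + (1+\delta)1_B$ with disjoint $A, B$ satisfying $|A|=m$, $|B|=\lceil \lambda m\rceil$: the order-$\lceil \lambda m\rceil$ natural greedy set of $z$ is $B$, so $\|1_A\| = \|z - \mathcal G_{\lceil \lambda m\rceil}(z)\| \le C\sigma_m(z) \le C(1+\delta)\|1_B\|$, yielding $\|1_A\| \le C\|1_B\|$ whenever $|B|=\lceil \lambda|A|\rceil$ and $A\cap B = \emptyset$. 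Standard democracy for equal-sized sets then follows by bridging through an auxiliary set of size $\lceil\lambda m\rceil$ and using quasi-greedy subset bounds.

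The main technical obstacle is the bound $\|P_{A^c}(y)\| \le C'\|y\|$ in (i) $\Rightarrow$ (ii). Since almost greedy bases need not be unconditional, $P_{A^c}$ onto an arbitrary co-finite set is not a priori uniformly bounded, so the argument requires combining quasi-greediness with democracy (both inherited from (i)) via a careful truncation/comparison scheme to obtain the bound with a constant depending only on $\mathbf{C}_q$ and the democracy parameter.
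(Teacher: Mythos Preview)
First, note that the paper does not actually supply a proof of this theorem: it is quoted as a known result from \cite{DKKT}, so there is no in-paper argument to compare against. That said, your plan for (iii)$\Rightarrow$(i) is sound and in fact parallels the paper's own proof of the analogous Theorem~\ref{m2} (with $\mathcal D_m^f$ in place of $\sigma_m$): quasi-greediness via the gap bound $n-\lceil\lambda m\rceil\le\lambda$, and then democracy from a test vector of the form $1_A+(1+\delta)1_B$ followed by a bridging argument through an auxiliary set.

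Your (i)$\Rightarrow$(ii), however, has a genuine gap, and it is exactly the one you flag as the ``main technical obstacle''. The inequality $\|P_{A^c}(y)\|\le C'\|y\|$ with $C'$ depending only on $\mathbf C_q$ and the democracy constant, uniformly over all finite $A$ and all $y$, is precisely suppression unconditionality: it is equivalent to $\sup_A\|I-P_A\|<\infty$. Since there exist almost greedy bases that are \emph{not} unconditional, no ``truncation/comparison scheme'' built from quasi-greediness and democracy alone can produce this bound; your proposed fix cannot work as stated. The argument in \cite{DKKT} avoids $\|P_{A^c}(y)\|$ altogether. One instead works directly with the greedy set $\Lambda$ of $x$ of size $k=\lceil\lambda m\rceil$, uses that $|\Lambda\setminus A|\ge k-m\ge(\lambda-1)m$ while $|\Lambda^c\cap A|\le m$, and controls the contribution of the ``bad'' indices in $\Lambda^c\cap A$ by comparing against the ``good'' indices in $\Lambda\setminus A$ via democracy together with the quasi-greedy truncation inequalities of Lemma~\ref{gu}. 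The hypothesis $\lambda>1$ is what makes this work: it guarantees enough room in $\Lambda\setminus A$ to absorb $\Lambda^c\cap A$, which is precisely why the equivalence fails at $\lambda=1$ for non-greedy almost greedy bases.
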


\subsection{Recent results and extensions}
Let $[1_A]$ denote the $1$-dimensional subspace spanned by $1_A$. P. M. Bern\'{a} and O. Blasco \cite{BB} showed that it is possible to replace $\sigma_m$ in \eqref{e6} by a simpler functional $\mathcal{D}_m$. In particular, 
$$\mathcal{D}_m(x) := \inf\left\{d(x, [1_A])\,:\, A\subset \mathbb{N}, |A| = m\right\},$$
where $d(x, [1_A]) = \inf\{\|x-y\|\,:\, y\in [1_A]\}$.

\begin{thm}[Bern\'{a} and Blasco]\label{BB1}
A basis $\mathcal{B}$ in a Banach space $\mathbb X$ is greedy if and only if there exists a constant $C$ such that
$$\|x-\mathcal{G}_m(x)\|\ \le\ C\mathcal{D}_m(x), \forall x\in \mathbb{X}, \forall m\in\mathbb{N}.$$
\end{thm}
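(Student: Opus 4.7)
My plan is to use the Konyagin--Temlyakov characterization (Theorem \ref{KT1}): I shall establish the easy direction directly and, for the converse, verify both unconditionality and democracy. For the forward direction, note that for every finite $A$ and every $c\in\mathbb R$ the vector $c\cdot 1_A=\sum_{n\in A}c\,e_n$ is one of the competitors in the definition of $\sigma_m(x)$, so $\sigma_m(x)\le \mathcal{D}_m(x)$. Hence, if $\mathcal B$ is $C$-greedy, then $\|x-\mathcal G_m(x)\|\le C\sigma_m(x)\le C\mathcal{D}_m(x)$.

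For the converse, assume $\|x-\mathcal{G}_m(x)\|\le C\mathcal{D}_m(x)$ for every $x\in\mathbb X$ and $m\in\mathbb N$. The heart of the argument will be a perturbation trick that yields suppression-unconditionality. Fix $x\in\mathbb X$ and a finite $A\subset\mathbb N$ of cardinality $m$. Since $\mathcal B$ is semi-normalized, $M:=\sup_n|e_n^*(x)|<\infty$; pick $\lambda>2M$ and set $y:=x+\lambda 1_A$. Then $|e_n^*(y)|\ge\lambda-M>M\ge |e_k^*(y)|$ for all $n\in A$ and $k\notin A$, so $A$ is the order-$m$ greedy set of $y$ and $\mathcal{G}_m(y)=P_A(y)=P_A(x)+\lambda 1_A$, giving $y-\mathcal{G}_m(y)=x-P_A(x)$. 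Since $\lambda 1_A\in[1_A]$, we also have $\mathcal{D}_m(y)\le\|y-\lambda 1_A\|=\|x\|$. Combining these,
\begin{equation*}
\|x-P_A(x)\|\ =\ \|y-\mathcal{G}_m(y)\|\ \le\ C\,\mathcal{D}_m(y)\ \le\ C\|x\|,
\end{equation*}
so $\|P_A(x)\|\le (1+C)\|x\|$ and $\mathcal B$ is suppression-unconditional.

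Democracy follows from a standard test vector: for disjoint $A,B$ with $|A|=|B|=m$ and $\varepsilon>0$, set $x:=1_A+(1+\varepsilon)1_B$. Since the $B$-coordinates dominate in absolute value, $\mathcal{G}_m(x)=(1+\varepsilon)1_B$ and $x-\mathcal{G}_m(x)=1_A$; using $c=1$ in $[1_A]$ gives $\mathcal{D}_m(x)\le (1+\varepsilon)\|1_B\|$, so $\|1_A\|\le C(1+\varepsilon)\|1_B\|$, and letting $\varepsilon\to 0^+$ yields $\|1_A\|\le C\|1_B\|$. For general $A,B$ with $|A|=|B|$, I would split $A=(A\cap B)\sqcup(A\setminus B)$ and $B=(A\cap B)\sqcup(B\setminus A)$, apply the disjoint case to $A\setminus B$ and $B\setminus A$, and invoke the suppression-unconditional constant obtained above to pass from $\|1_{A\cap B}\|$ and $\|1_{B\setminus A}\|$ to $\|1_B\|$. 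Theorem \ref{KT1} then delivers greediness. The main obstacle is the perturbation step: one has to recognize that $\mathcal{D}_m$ is insensitive to scalar multiples of $1_A$, which is precisely what allows the substitution $y=x+\lambda 1_A$ to turn an arbitrary coordinate projection $P_A$ into a greedy operator.
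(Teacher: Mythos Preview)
Your proof is correct and follows essentially the same route as the paper: the forward direction via $\sigma_m\le\mathcal D_m$, the perturbation $y=x+\lambda 1_A$ to deduce suppression-unconditionality, and the test vector $1_A+(1+\varepsilon)1_B$ for disjoint democracy are exactly the arguments in Proposition~\ref{pfmm1} and the proof of Theorem~\ref{mm1} specialized to $f\equiv 1$. The only cosmetic difference is that you pass from disjoint to general democracy by invoking the suppression constant, whereas the paper (Proposition~\ref{di}) does it by introducing a third set $G$ disjoint from both $A$ and $B$; either way works and yields comparable constants.
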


We describe our extension. Given a function $f: \mathbb{N}\rightarrow \mathbb{R}$ and $A = \{n_1, n_2, \ldots, n_m\}\subset\mathbb{N}$, define 
$$1_{f, A}\ :=\ \sum_{j=1}^m f(j)e_{n_j}\mbox{ and } \mathcal{D}^f_m(x) \ :=\ \inf\left\{d(x, [1_{f,B}])\,:\, B\subset \mathbb{N}, |B| = m\right\}.$$

\begin{defi}\normalfont
A basis $\mathcal B$ in a Banach space is $f$-\textbf{greedy} if there is a positive constant $C_f$ (possibly dependent on $f$) such that 
\begin{equation}\label{e110902}\|x-\mathcal{G}_m(x)\|\ \le\ C_f\mathcal{D}^f_m(x), \forall x\in \mathbb{X}, \forall m\in\mathbb{N}.\end{equation}
\end{defi}

Clearly, all greedy bases satisfy \eqref{e110902}, since $\sigma_m(x)\le \mathcal{D}^f_m(x)$ for all $x\in \mathbb{X}$ and $m\in \mathbb{N}$. A natural question is for which functions $f$, does \eqref{e110902} imply greediness?
Our first result in this article gives a sufficient condition on $f$ so that \eqref{e110902} implies that $\mathcal{B}$ is greedy. (By Theorem \ref{BB1}, we know that $f\equiv 1$ works.) Furthermore, we give examples of a class of functions $f$ and bases that satisfy \eqref{e110902} but are not greedy. Roughly speaking, when the values of $|f(n)|$ are not ``bounded away" from $0$ or $\infty$ and increase/decrease fast enough, then there are bases that satisfy \eqref{e110902} but is not democratic (see Section \ref{expo}.) 

\begin{defi}\normalfont
A function $f:\mathbb{N}\rightarrow \mathbb{N}$ is \textbf{regular} if $0 < c_1 \le \inf_n |f(n)|\le \sup_n |f(n)| \le c_2 < \infty$ for some constants $c_1$ and $c_2$.
\end{defi}

\begin{thm}\label{mm1} If $f$ is regular, then a basis $\mathcal B$ in a Banach space is $f$-greedy if and only if the basis is greedy.
\end{thm}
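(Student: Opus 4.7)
My plan is to apply Konyagin--Temlyakov's Theorem~\ref{KT1} and show that $f$-greediness with regular $f$ forces both unconditionality and democracy. The reverse implication is immediate: since $a\cdot 1_{f,A}$ is a particular $m$-term linear combination of basis vectors, $\sigma_m(x)\le\mathcal{D}^f_m(x)$, and $C$-greediness yields $C$-$f$-greediness for free. So I will focus on the forward direction, assuming $\mathcal{B}$ is $C_f$-$f$-greedy with $0<c_1\le|f(n)|\le c_2$ for all $n$.

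\emph{Step 1 (Unconditionality).} Fix $x\in\mathbb{X}_c$ with support $E$, a subset $F\subset E$, and set $D:=E\setminus F$, $k:=|D|$. Consider
$$y \ :=\ x + t\cdot 1_{f,D},$$
with $|t|$ chosen so large that $|t|c_1 > 2\max_n|e^*_n(x)|$. Regularity of $f$ then forces every coefficient of $y$ on $D$ to have modulus at least $|t|c_1-\max_n|e^*_n(x)|$, strictly exceeding every coefficient modulus on $F$. Hence the natural greedy set of order $k$ is exactly $D$, so $\mathcal{G}_k(y)=P_Dy$ and $y-\mathcal{G}_k(y)=P_Fx$. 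The $f$-greedy inequality, combined with the bound $\mathcal{D}^f_k(y)\le\|y-t\cdot 1_{f,D}\|=\|x\|$ (taking $A'=D$ with coefficient $t$ in the infimum), gives $\|P_Fx\|\le C_f\|x\|$. Passing to general $x\in\mathbb{X}$ via $P_Fx=P_FS_Nx$ and $\|S_Nx\|\to\|x\|$ shows $\mathcal{B}$ is suppression-unconditional with constant at most $C_f$, hence unconditional with some constant $\mathbf{K}_u<\infty$.

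\emph{Step 2 (Democracy).} Given disjoint finite $A,B$ with $|A|=|B|=m$, consider
$$x \ :=\ 1_{f,A} + M\cdot 1_{f,B},\qquad M > c_2/c_1.$$
Since $Mc_1>c_2$, every coefficient of $x$ on $B$ strictly dominates every coefficient on $A$ in modulus, so $\mathcal{G}_m(x)=M\cdot 1_{f,B}$ and $x-\mathcal{G}_m(x)=1_{f,A}$. Taking $A'=A$ with coefficient $1$ in the infimum defining $\mathcal{D}^f_m(x)$ yields $\mathcal{D}^f_m(x)\le M\|1_{f,B}\|$, whence
$$\|1_{f,A}\|\ \le\ C_fM\|1_{f,B}\|\qquad\text{for every }M>c_2/c_1.$$
Unconditionality provides $c_1\mathbf{K}_u^{-1}\|1_A\|\le\|1_{f,A}\|\le c_2\mathbf{K}_u\|1_A\|$ (and similarly for $B$), so combining gives $\|1_A\|\le C\|1_B\|$ with $C$ depending only on $C_f,\mathbf{K}_u,c_1,c_2$. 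The non-disjoint case reduces to this by inserting a third set disjoint from $A\cup B$ and applying the estimate twice.

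The main obstacle is engineering the test vector $y$ in Step~1 so that (i) the natural greedy ordering isolates exactly $D$ (which requires $|f|$ to be bounded below away from $0$), and (ii) the upper bound for $\mathcal{D}^f_k(y)$ is independent of the large parameter $t$ (achieved by matching $A'=D$ with coefficient $t$, so the $t$-perturbation cancels exactly against itself). Regularity of $f$ enters decisively at (i): any non-regular $f$ with $\inf_n|f(n)|=0$ or $\sup_n|f(n)|=\infty$ breaks the dominance argument, which aligns with the promised counterexamples in Section~\ref{expo}.
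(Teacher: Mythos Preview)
Your proof is correct and follows essentially the same route as the paper: Step~1 is exactly Proposition~\ref{pfmm1}, and Step~2 is the argument in the proof of Theorem~\ref{mm1} combined with Proposition~\ref{equivfdemo} (the paper first proves $f$-democracy and then invokes unconditionality to pass to democracy, while you apply unconditionality directly to each side of the $\|1_{f,A}\|\le C\|1_{f,B}\|$ inequality, but this is only a cosmetic reordering). The reduction from the non-disjoint to the disjoint case via a third set is the content of Proposition~\ref{di}.
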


The main difference between proving Theorems \ref{BB1} and \ref{mm1} is that while the proof of Theorem \ref{BB1} shows democracy and unconditionality of $\mathcal{B}$ independently, the proof of Theorem \ref{mm1} uses unconditionality (and norm convexity) to show democracy.

It is not clear if we can replace $\tilde{\sigma}_m(x)$ in \eqref{e7} by a simpler functional that involves $1$-dimensional subspaces as in Theorem \ref{BB1}. However, 
in the spirit of Theorem \ref{dkkt03}, we can characterize almost greedy bases by modifying the left side of \eqref{e110902}. 
\begin{thm}\label{m2} Let $f$ be a regular function. The following are equivalent:
\begin{enumerate}
    \item [(i)] $\mathcal{B}$ is almost greedy.
    \item [(ii)] For all $\lambda>1$, there exists $C_\lambda$ such that 
    \begin{equation}\label{e110704}\|x-\mathcal{G}_{\lceil \lambda m\rceil}\| \ \le\ C_\lambda \mathcal{D}_m^f(x), \forall m\in \mathbb{N}, \forall x\in \mathbb{X}.\end{equation}
    \item [(iii)] For some $\lambda > 1$, there exists $C$ such that 
    \begin{equation}\label{e110702}\|x-\mathcal{G}_{\lceil \lambda m\rceil}(x)\|\ \le\ C \mathcal{D}_m^{f}(x), \forall m\in \mathbb{N}, \forall x\in \mathbb{X}.\end{equation}
\end{enumerate}
\end{thm}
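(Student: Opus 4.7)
The implications (i) $\Rightarrow$ (ii) and (ii) $\Rightarrow$ (iii) are routine. For the first, I would invoke Theorem~\ref{dkkt03} together with the trivial bound $\sigma_m(x)\le\mathcal{D}_m^f(x)$, which holds because any $c\cdot 1_{f,B}$ with $|B|=m$ is a valid $m$-term approximation of $x$; the second is immediate by specializing $\lambda$. The substantive direction is (iii) $\Rightarrow$ (i), for which my plan is to fix constants $\lambda_0>1$ and $C>0$ witnessing (iii) and to show that $\mathcal B$ is both quasi-greedy and democratic, whence the Dilworth--Kalton--Kutzarova--Temlyakov characterization of almost-greedy bases as quasi-greedy plus democratic yields (i).

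Quasi-greediness is the easier half. Choosing $c=0$ in the definition of $\mathcal D_m^f$ gives $\mathcal D_m^f(x)\le\|x\|$, so (iii) yields $\|\mathcal G_{\lceil\lambda_0 m\rceil}(x)\|\le(1+C)\|x\|$ for every $m\in\mathbb N$. To extend this uniform bound to all greedy orders $N$, I would write $N=\lceil\lambda_0 m_0\rceil+r$ with $0\le r<\lceil\lambda_0\rceil$ and absorb the extra $r$ summands using the semi-normalization of $\mathcal B$ and $\mathcal B^*$, since each term $e^*_{\rho(j)}(x)\,e_{\rho(j)}$ has norm at most $c_2^2\|x\|$.

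Democracy is the heart of the argument, and here the regularity of $f$ is essential. The plan is to test (iii) on vectors $x=\alpha\cdot 1_A+\beta\cdot 1_B$ for disjoint $A,B$ with appropriately tuned sizes and with $\alpha,\beta$ chosen so that the natural greedy sum of order $\lceil\lambda_0 m\rceil$ extracts $\alpha\cdot 1_A$ and leaves $\beta\cdot 1_B$ as residual. To bound $\mathcal D_m^f(x)$ from above, I would make a Chebyshev-type choice of scalar, $c=2\beta/(c_1+c_2)$, which, using $|f(j)|\in[c_1,c_2]$, makes $|\beta-cf(j)|\le\beta\delta$ uniformly in $j$ for $\delta:=(c_2-c_1)/(c_1+c_2)<1$. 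The main obstacle I expect is then closing the loop: the naive substitution produces an inequality of the shape $\|1_B\|\le C'\|1_A\|+C''\delta\,\|1_B\|$, and absorbing the right-hand $\|1_B\|$ requires $C''\delta<1$. Since $\delta$ is determined by $f$ and can be close to $1$ for highly varied regular $f$, overcoming this should involve either iterating the comparison across sets of sizes $m$ and $\lceil\lambda_0 m\rceil$, or a finer perturbation argument refining the choice of both scalar and supporting set in $\mathcal D_m^f(x)$ using the quasi-greediness already established. Once democracy is secured, the DKKT theorem produces almost-greediness.
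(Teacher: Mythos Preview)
Your overall architecture is right and matches the paper: (i)$\Rightarrow$(ii) via Theorem~\ref{dkkt03} and $\sigma_m\le\mathcal D_m^f$, (ii)$\Rightarrow$(iii) trivially, and (iii)$\Rightarrow$(i) by proving quasi-greediness and democracy separately. Your quasi-greedy argument is essentially the paper's.

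The genuine gap is in your democracy step, and you have honestly flagged it yourself: your test vector $x=\alpha 1_A+\beta 1_B$ with constant coefficients forces you to estimate $\mathcal D_m^f(x)$ via $\|x-c\,1_{f,B}\|$, leaving a residual $\beta 1_B-c\,1_{f,B}$ whose norm you can only control (through quasi-greedy estimates) by a multiple of $\|1_B\|$. This produces the self-referential inequality $\|1_B\|\le C'\|1_A\|+C''\delta\,\|1_B\|$ with no reason for $C''\delta<1$, and neither of your suggested fixes (iteration on sizes, or an unspecified perturbation) actually closes this loop.

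The paper sidesteps the problem entirely by choosing the test vector with $f$-weighted indicators from the start: with $|A|=m$, $|B|=\lceil\lambda m\rceil$ disjoint, set $x=(c_2/c_1+\varepsilon)1_{f,B}+1_{f,A}$. The coefficient ratio $c_2/c_1$ guarantees $B$ is the greedy set, and subtracting $1_{f,A}$ in $\mathcal D_m^f$ leaves exactly $(c_2/c_1+\varepsilon)1_{f,B}$ with \emph{zero} error term, yielding $\|1_{f,A}\|\le (c_2/c_1)C\,\|1_{f,B}\|$ cleanly. Only then does quasi-greediness enter, via Lemma~\ref{gu}, to convert $\|1_{f,A}\|$, $\|1_{f,B}\|$ into $\|1_A\|$, $\|1_B\|$. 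So the role of quasi-greediness is not to absorb a leftover term but to pass from $f$-democracy to democracy after the fact. Finally, this only compares sets of sizes $m$ and $\lceil\lambda m\rceil$; the paper closes the size gap by a partitioning argument (split the excess $\lceil\lambda m\rceil-m$ indices into $\lceil\lambda^2\rceil$ pieces small enough to apply the first comparison against initial segments of $B$, using the basis constant), which is a step your outline does not supply.
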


\begin{rek}\normalfont
In the proof that (iii) implies (i), we need to show that $\mathcal{B}$ is quasi-greedy and democratic. Like the proof of Theorem \ref{mm1}, we cannot show quasi-greediness and democracy independently. Instead, we first prove that \eqref{e110702} implies quasi-greediness, which will then be used to show democracy. It is worth mentioning that the same example given in Section \ref{expo}  shows that if $f$ is not regular, then \eqref{e110702} does not necessarily imply almost greediness.  
\end{rek}

The next definition is handy in due course. 

\begin{defi}\normalfont
Let $f: \mathbb{N}\rightarrow \mathbb{R}$.
A basis $\mathcal{B}$ is said to be \textbf{$f$-democratic} if there exists a constant $C$ such that 
$$\|1_{f, A}\|\ \le\ C\|1_{f, B}\|$$
for all finite sets $A, B\subset \mathbb{N}$ with $|A| = |B|$. In this case, we also say that $\mathcal{B}$ is $C$-$f$-democratic. 
\end{defi}

In the proofs of Theorems \ref{mm1} and \ref{m2}, we implicitly employ the fact that a quasi-greedy basis $\mathcal{B}$ is democratic if and only if it is $f$-democratic, given that $f$ is regular. As a complement to this observation, Section \ref{nonequivdemo} gives examples of bases that are not quasi-greedy, and the equivalence fails even when $f$ is regular.

Finally, we shift our attention to quasi-greedy bases by first discussing several characterizations that are immediate from the work of P. Wojtaszczyk, S. V. Konyagin, and V. N. Temlyakov \cite{KT, W}. Then we extend the characterization to the so-called $(A, B, t)$-greedy algorithm. For $t\in [0,1]$, a finite set $G$ is said to be a \textbf{$t$-weak greedy} set of $x$ if $$\min_{n\in G}|e_n^*(x)|\ge t\max_{n\notin G}|e_n^*(x)|.$$ Let $G(x, t)$ be the set of all $t$-weak greedy sets of $x$, $\mathcal{G}_m^t(x)$ be the set of all $t$-weak greedy sums of $x$ of order $m$, and $G_m^t(x)$ be a $t$-weak greedy sum of $x$ of order $m$. 

\begin{defi}[\cite{KT}]\normalfont
Let $\mathcal{B}$ be a basis in a Banach space $\mathbb{X}$. The basis is called \textbf{$t$-quasi-greedy} if there exists a constant $C := C(t) > 0$ such that \begin{equation}\label{etqg}\|G_m^t(x)\|\ \le\ C\|x\|, \forall m\in \mathbb{N}, \forall x\in \mathbb{X}, \forall G^t_m(x)\in \mathcal{G}_m^t(x).\end{equation}
The least constant verifying \eqref{etqg} is denoted by $\mathbf C_{q,t}$, and we say that $\mathcal{B}$ has the \textbf{$\mathbf C_{q,t}$-$t$-quasi-greedy property}. 
\end{defi}

\begin{thm}[Wojtaszczyk,  Konyagin, and Temlyakov \cite{KT, W}]\label{WKT}
Let $\mathcal{B}$ be a basis in a Banach space $\mathbb{X}$.
\begin{enumerate}
    \item[(i)] Fix $t\in (0,1]$. The basis $\mathcal{B}$ is quasi-greedy if and only if $\mathcal{B}$ has the $t$-quasi-greedy property.
    \item[(ii)] Fix $t\in (0,1]$. The basis $\mathcal{B}$ is quasi-greedy if and only if for any $x\in \mathbb{X}$ and any greedy approximation $(G_m^t(x))_{m=1}^\infty$, we have $G_m^t(x)\rightarrow x$.
    \item[(iii)] The basis $\mathcal{B}$ has the $0$-quasi-greedy property if and only if it is unconditional. 
\end{enumerate}
\end{thm}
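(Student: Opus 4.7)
Part (iii) is immediate from the definition: for $t=0$ the condition $\min_{n\in G}|e_n^*(x)|\ge 0$ is vacuous, so every finite $G\subset\mathbb{N}$ is a $0$-weak greedy set. The $0$-quasi-greedy property is therefore equivalent to the suppression-unconditional bound $\|P_G(x)\|\le C\|x\|$ for all finite $G$, which characterizes unconditionality.

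For (i), the easy direction holds because any natural greedy set $\Lambda_m(x)$ satisfies $\min_{n\in\Lambda_m(x)}|e_n^*(x)|\ge \max_{n\notin\Lambda_m(x)}|e_n^*(x)|\ge t\max_{n\notin\Lambda_m(x)}|e_n^*(x)|$, so each $\mathcal G_m(x)$ is in particular a $t$-weak greedy sum. For the converse, given a $t$-weak greedy set $G$ of $x$ with $\alpha:=\min_{n\in G}|e_n^*(x)|$ and $M:=\max_{n\notin G}|e_n^*(x)|$ (so $\alpha\ge tM$), my plan is to use the perturbation $y_\lambda:=(1-\lambda)P_G(x)+\lambda x$. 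Then $|e_n^*(y_\lambda)|=|e_n^*(x)|\ge\alpha$ for $n\in G$ and $|e_n^*(y_\lambda)|=\lambda|e_n^*(x)|\le\lambda M$ for $n\notin G$; choosing $\lambda=t$ makes $G$ an exact greedy set of $y_t$. Quasi-greediness applied to general greedy sums yields
$$\|P_G(x)\|\ =\ \|P_G(y_t)\|\ \le\ \mathbf{C}_q\|y_t\|\ \le\ \mathbf{C}_q\bigl((1-t)\|P_G(x)\|+t\|x\|\bigr),$$
which closes the bound whenever $t>1-1/\mathbf{C}_q$. For smaller $t$ I would bootstrap: once the $t_0$-quasi-greedy property is known with constant $D$, the same perturbation with $\lambda=t/t_0$ extends it to every $t>t_0(1-1/D)$.

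For (ii), assume quasi-greediness; by (i) the basis is $t$-quasi-greedy with some constant $\mathbf{C}_{q,t}$. Given $x\in\mathbb X$ and $\varepsilon>0$, let $y:=S_N(x)\in\mathbb X_c$ for $N$ large enough that $\|x-y\|<\varepsilon$, and set $\delta:=\min_{n\in\supp(y)}|e_n^*(y)|>0$. Since $e_n^*(x)\to 0$, the set $F:=\{n:|e_n^*(x)|\ge t\delta\}$ is finite. Any $t$-weak greedy set $G$ of $x$ with $|G|>|F|$ must contain $\supp(y)$: otherwise some $n_0\in\supp(y)\setminus G$ would give $\max_{n\notin G}|e_n^*(x)|\ge\delta$, forcing $G\subset F$. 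Using $\supp(y)\subset G$ one has $P_G(y)=y$ and therefore $x-P_G(x)=(x-y)-P_G(x-y)$. Observe that $G':=G\cap\{n>N\}$ is a $t$-weak greedy set of $x-y=\sum_{n>N}e_n^*(x)e_n$ (its minimum coefficient is at least $\alpha$ and its maximum outside is at most $M$, inherited from $G$ and the fact that $e_n^*(x-y)=0$ for $n\le N$), so
$$\|P_G(x-y)\|\ =\ \|P_{G'}(x-y)\|\ \le\ \mathbf{C}_{q,t}\|x-y\|\ \le\ \mathbf{C}_{q,t}\varepsilon,$$
giving $\|x-G_m^t(x)\|\le(1+\mathbf{C}_{q,t})\varepsilon$ and the convergence $G_m^t(x)\to x$. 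Conversely, convergence of every sequence $(G_m^t(x))_m$ provides pointwise boundedness $\sup_m\|G_m^t(x)\|<\infty$, and a Banach-Steinhaus-type argument (in the spirit of Wojtaszczyk's original characterization) upgrades this to the uniform bound $\|G_m^t(x)\|\le C\|x\|$, i.e.\ the $t$-quasi-greedy property; by (i), $\mathcal B$ is quasi-greedy.

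The main obstacle I expect is the bootstrap in (i): the one-shot perturbation closes only for $t>1-1/\mathbf{C}_q$, and covering all $t\in(0,1]$ requires arranging the iteration so that the growth of the successive constants $D_k$ does not prevent $t_k\downarrow 0$. A potentially cleaner alternative is the direct decomposition $G=A\cup H$ with $A:=\{n:|e_n^*(x)|>M\}$ (an exact greedy set, bounded by $\mathbf{C}_q\|x\|$) and $H:=G\setminus A$ (medium coefficients in $[\alpha,M]\subset[tM,M]$); controlling $\|P_H(x)\|$ then rests on a truncation-type lemma for quasi-greedy bases, which is the technical heart of the argument as it appears in the literature.
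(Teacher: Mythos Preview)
Part (iii) and the easy directions of (i) and (ii) are correct and agree with the paper (which, for the substantive directions of (i) and (ii), simply cites \cite{KT,W}).

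The real gap is the hard direction of (i). Your bootstrap cannot reach every $t\in(0,1]$ once $\mathbf{C}_q>1$, and this is not a matter of choosing the step sizes cleverly. Writing $\mu_k:=D_k(1-\lambda_k)\in(0,1)$, your recursion becomes $D_{k+1}-1=(D_k-1)/(1-\mu_k)$, so $D_k\ge(D_0-1)/P_k$ with $P_k:=\prod_{j<k}(1-\mu_j)$. Then
\[
\sum_{k}(1-\lambda_k)\ =\ \sum_{k}\frac{\mu_k}{D_k}\ \le\ \frac{1}{D_0-1}\sum_{k}\mu_kP_k\ =\ \frac{1}{D_0-1}\sum_{k}(P_k-P_{k+1})\ \le\ \frac{1}{\mathbf{C}_q-1}\ <\ \infty,
\]
and since each $\lambda_k>1-1/D_0>0$, the product $t_k=\prod_{j<k}\lambda_j$ stays bounded below by a positive constant. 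So you must commit to the alternative you sketch at the end: split $G=A\cup H$ with $A=\{n:|e_n^*(x)|>M\}$ a genuine greedy set (bounded by $\mathbf{C}_q\|x\|$) and control $\|P_H(x)\|$, whose coefficients lie in $[tM,M]$, via the quasi-greedy truncation estimates (Lemma~\ref{gu} and Lemma~\ref{l1} of this paper, which is precisely the content of \cite[Lemma~2.3]{KT} that the paper invokes).

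For the backward direction of (ii) you are working harder than necessary. The paper simply observes that each $\mathcal{G}_m(x)$ is in particular a $t$-weak greedy sum, so the hypothesis that \emph{every} sequence of $t$-weak greedy sums converges already yields $\mathcal{G}_m(x)\to x$, which is the \emph{definition} of quasi-greedy; no uniform-boundedness or gliding-hump argument is needed. Your forward argument for (ii), reducing to a finitely supported approximant and applying the $t$-quasi-greedy bound to the tail, is correct once (i) is in place and is close in spirit to Wojtaszczyk's original proof.
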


\begin{proof}
(i) The forward implication is due to S. V. Konyagin and V. N. Temlyakov \cite[Lemma 2.3]{KT}. The backward implication follows from the fact that any greedy set is $t$-weak greedy and P. Wojtaszczyk's characterization of quasi-greediness \eqref{e110801}.

(ii) The forward implication is due to P. Wojtaszczyk \cite{W}. The backward implication is due to the fact that any greedy set is $t$-weak greedy and the definition of quasi-greediness. 

(iii) If $\mathcal{B}$ is $\mathbf K_{su}$-unconditional, then $\|P_A(x)\|\le \mathbf K_{su}\|x\|$ for all $x\in \mathbb{X}$ and $A\subset \mathbb{N}$. Hence, $\mathcal{B}$ has the $\mathbf C_{q,0}$-$0$-quasi-greedy property with $\mathbf C_{q,0}\le \mathbf K_{su}$. 

Now, assume that $\mathcal{B}$ has the $\mathbf C_{q,0}$-$0$-quasi-greedy property. Let $x\in \mathbb{X}$. Since any finite $A\subset \mathbb{N}$ is a $0$-weak greedy set for $x$, the $\mathbf C_{q,0}$-$0$-quasi-greedy property implies that $\|P_A(x)\|\le \mathbf C_{q,0}\|x\|$. Therefore, $\mathcal{B}$ is unconditional with $\mathbf K_{su}\le \mathbf C_{q,0}$. 
\end{proof}

Our goal is to extend Theorem \ref{WKT} to a new algorithm, the \textbf{$(A, B, t)$-weak greedy algorithm}. Fix $t\in (0,1]$ and sets of positive integers $A$ and $B$. For some $(a,b)\in A\times B$, a finite set $\Gamma\subset \mathbb{N}$ with $|\Gamma| = m\ge a$ is called an \textbf{$(a, b, t)$-weak greedy set} of $x\in\mathbb{X}$ if
$$|e_{s(a)}^*(x, \Gamma)| \ge t |e_{\ell(b)}^*(x, \Gamma)|,$$
where $e_{\ell(b)}^*(x, \Gamma)$ is the $b$-th largest coefficient (in modulus) over all $e_k^*(x)$ with $k\in \Gamma^c$ and $e_{s(a)}^*(x, \Gamma)$ is the $a$-th smallest coefficient (in modulus) over all $e_j^*(x)$ with $j\in \Gamma$.

Let $G^{a, b, t}_m(x): = P_{\Gamma}(x)$ denote an \textbf{$(a, b, t)$-weak greedy sum} of order $m$ of $x$, $\mathcal G^{a, b, t}_m(x)$ denote the set of all $(a,b,t)$-weak greedy sums of order $m$ of $x$, and $G(x, a, b, t)$ denote the set of all $(a, b, t)$-weak greedy sets of $x$.

\begin{defi}\normalfont
	Let $\mathcal B$ a basis in a Banach space $\mathbb{X}$. The basis is said to have the \textbf{$(A,B,t)$-quasi-greedy property} if there exists an absolute constant $C > 0$ such that for all $a\in A$ and $b\in B$, 
	\begin{equation}\label{abquasi}
		\|G^{a, b, t}_m(x)\|\ \le\ C\|x\|, \forall m\ge a, \forall x\in \mathbb{X}, \forall G^{a,b,t}_m(x)\in \mathcal{G}^{a, b, t}_m(x).
	\end{equation}
The least constant verifying \eqref{abquasi} is denoted by $\mathbf Q=\mathbf Q(A,B,t)$.
\end{defi}

The next theorem has the same spirit as Theorem \ref{WKT}. 

\begin{thm}\label{m1}
Let $\mathcal{B}$ be a basis in a Banach space $\mathbb{X}$. Fix $t\in (0,1]$ and $A, B\subset\mathbb{N}$. 
\begin{enumerate}
    \item[(i)] If $A\cup B$ is finite, then $\mathcal{B}$ is quasi-greedy if and only if $\mathcal{B}$ has the $(A, B, t)$-quasi-greedy property. 
    \item [(ii)] If $A\cup B$ is finite, then $\mathcal{B}$ is quasi-greedy if and only if for any $x\in \mathbb{X}$ and any greedy approximation $( G^{a_m,1, t}_{m}(x))_{m=\max A}^\infty$ (here $(a_m)\subset A$), we have $$G^{a_m,1, t}_{m}(x)\rightarrow x.$$
    \item[(iii)] If $A\cup B$ is infinite, then $\mathcal{B}$ has the $(A, B, t)$-quasi-greedy property if and only if it is unconditional. 
\end{enumerate}
\end{thm}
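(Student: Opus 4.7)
The plan is to establish (i), (ii), (iii) separately, in each case handling the easy direction first and using Theorem \ref{WKT} as the $t$-quasi-greedy baseline. Throughout I write $a_0 = \min A$ and $b_0 = \min B$.

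For (i), the direction $(A,B,t)$-QG $\Rightarrow$ QG is immediate because every natural greedy set $\Lambda_m(x)$ satisfies the $(a,b,t)$-weak-greedy condition for each $(a,b) \in (A \cap [1,m]) \times B$ (min interior $\geq$ max exterior $\geq t \cdot b$-th largest exterior); applying the hypothesis at $a = a_0$ uniformly bounds $\|\mathcal{G}_m(x)\|$ for $m \geq a_0$, while semi-normalization handles the finitely many smaller orders. For the converse, start from QG and invoke Theorem \ref{WKT}(i) to obtain $t$-quasi-greediness. Given an $(a,b,t)$-weak greedy set $\Gamma$ with threshold $\tau = |e_{\ell(b)}^*(x,\Gamma)|$, split $\Gamma = \Gamma_\mathrm{hi} \sqcup \Gamma_\mathrm{lo}$ where $\Gamma_\mathrm{lo} = \{n \in \Gamma : |e_n^*(x)| < t\tau\}$ has at most $a-1$ elements, and let $\Delta = \{n \notin \Gamma : |e_n^*(x)| > \tau\}$, of size at most $b-1$. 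Then $\Gamma^* := \Gamma_\mathrm{hi} \cup \Delta$ is a genuine $t$-weak greedy set (min interior $\geq t\tau$, max exterior $\leq \tau$), so $\|P_{\Gamma^*}(x)\| \leq \mathbf{C}_{q,t}\|x\|$, and the identity $P_\Gamma(x) = P_{\Gamma^*}(x) - P_\Delta(x) + P_{\Gamma_\mathrm{lo}}(x)$ combined with semi-normalization (using $A, B$ finite) completes the estimate.

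For (ii), the reverse direction is immediate since natural greedy sums are $(a_0,1,t)$-weak greedy; convergence of these sums yields QG by Theorem \ref{WKT}(ii). For the forward direction, (i) provides the $(A,B,t)$-quasi-greedy property, so the maps $G_m^{a_m,1,t}$ are uniformly bounded. The key lemma is that the threshold $\tau_m := |e_{\ell(1)}^*(x, \Gamma_m)|$ tends to $0$: otherwise, for some $\delta > 0$ along a subsequence, $\Gamma_m$ contains at least $|\Gamma_m| - a_m + 1 \to \infty$ indices with $|e_n^*(x)| \geq t\delta$, contradicting $e_n^*(x) \to 0$ since $(a_m) \subset A$ is bounded. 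Approximating $x$ by $y = S_N(x)$ with $\|x - y\| < \varepsilon$, one has $\mathrm{supp}(y) \subset \Gamma_m$ for large $m$, hence $P_{\Gamma_m}(y) = y$ and $x - P_{\Gamma_m}(x) = P_{\Gamma_m^c}(x - y)$; a standard Wojtaszczyk-style triangle-inequality argument then gives $\|x - P_{\Gamma_m}(x)\| = O(\varepsilon)$.

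For (iii), the trivial direction is $\|P_\Gamma(x)\| \leq \mathbf{K}_{su}\|x\|$. For the converse, assume $A \cup B$ is infinite. The case $B$ infinite is clean: reduce to $\Gamma \subset \mathrm{supp}(x)$ and $|\Gamma| \geq a_0$, set $\alpha = |e_{s(a_0)}^*(x, \Gamma)| > 0$; since $e_n^*(x) \to 0$, pick $b \in B$ with $\beta_b := |e_{\ell(b)}^*(x, \Gamma)| \leq \alpha/t$, so that $\Gamma$ is $(a_0, b, t)$-weak greedy of $x$ and the hypothesis gives $\|P_\Gamma(x)\| \leq \mathbf{Q}\|x\|$; small $\Gamma$ is handled by semi-normalization. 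When instead $A$ is infinite and $B$ is finite, the analogous move is to take $a \in A$ with $a \geq |\Gamma|$ and set $\Gamma^* := \Gamma \cup \tilde\Lambda$, where $\tilde\Lambda$ consists of the top $a - |\Gamma|$ coefficient indices of $x$ in $\Gamma^c$; then $\Gamma^*$ is $(a, b_0, t)$-weak greedy (its $a$-th smallest equals its maximum, which dominates the $b_0$-th largest outside $\Gamma^*$), so $\|P_{\Gamma^*}(x)\| \leq \mathbf{Q}\|x\|$ and $P_\Gamma(x) = P_{\Gamma^*}(x) - P_{\tilde\Lambda}(x)$. The principal obstacle here is a uniform bound on $\|P_{\tilde\Lambda}(x)\|$: the naive quasi-greedy estimate yields an inequality that is self-referential in $\|P_\Gamma(x)\|$, so closing it requires tuning $a$ so that $\tilde\Lambda$ itself is $(a', b', t)$-weak greedy of $x$ for some $(a', b') \in A \times B$ (equivalently, arranging $|\{n \in \Gamma : |e_n^*(x)| > \min_{k \in \tilde\Lambda} |e_k^*(x)|\}| < \max B$), or iterating the extension construction to a sequence of sets with geometrically shrinking problematic counts.
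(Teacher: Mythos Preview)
Your treatment of (i) is correct and essentially matches the paper's: both remove the at most $a-1$ ``low'' indices from $\Gamma$ and account for the at most $b-1$ ``high'' indices outside, reducing to a genuine $t$-weak greedy set controlled by Theorem~\ref{WKT}(i).

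For (ii), your argument is valid but differs from the paper's. The paper decomposes $P_{\Gamma_m}(x)$ via the level sets $B_m^1=\{n:|e_n^*(x)|>\tau_m\}$ and $B_m^2=\{n:|e_n^*(x)|\ge t\tau_m\}$ and then invokes Lemma~\ref{l1} together with Lemma~\ref{e110802}. Your route through the partial sum $y=S_N(x)$ is shorter, but the step you label ``standard Wojtaszczyk-style'' hides a point that must be checked: one needs $\Gamma_m':=\Gamma_m\cap\{N+1,N+2,\ldots\}$ to be an $(a_m,1,t)$-weak greedy set of $x-y$, so that part~(i) bounds $\|P_{\Gamma_m}(x-y)\|=\|P_{\Gamma_m'}(x-y)\|$ by a constant times $\|x-y\|<\varepsilon$. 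This does hold (the indices removed carry zero coefficient for $x-y$, and the low-coefficient part of $\Gamma_m'$ still sits inside the set $K_{\Gamma_m}$ of size $<a_m$), but it is not automatic and should be stated.

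For (iii) with $B$ infinite your direct argument is fine and in fact tidier than the paper's contradiction. The $A$-infinite case, however, has a genuine gap. The self-referential inequality you flag cannot be closed, and neither proposed fix works in general. Requiring $|\{n\in\Gamma:|e_n^*(x)|>\min_{k\in\tilde\Lambda}|e_k^*(x)|\}|<\max B$ forces $\min_{\tilde\Lambda}$ above a threshold dictated by the coefficient profile of $x$ on $\Gamma$; since $\tilde\Lambda$ is drawn from $\Gamma^c$, whose coefficients may all be tiny (e.g.\ when $\Gamma$ is a large greedy set of $x$), no choice of $a\in A$ need achieve this. Iterating just reproduces the same obstruction. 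The paper's resolution is to argue by contradiction and, crucially, to \emph{truncate}: given $(F_j)$ with $\|P_{F_j}(x)\|\to\infty$, pass to $y=S_N(x)$ with $N>\max F_j$, then choose $n_j\in A$ with $n_j>|F_j|$ and set $G_j=F_j\cup\{m_j\}\cup A_j$, where $m_j$ is a maximum-coefficient index of $y$ and $A_j$ consists of indices beyond $N$, hence carrying \emph{zero} coefficient in $y$. Then $G_j$ is $(n_j,b,t)$-weak greedy for $y$ (its $n_j$-th smallest equals its maximum, which is the global maximum of $y$), while $P_{G_j}(y)=P_{F_j\cup\{m_j\}}(y)$ because the padding contributes nothing. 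The troublesome extra term $\|P_{\tilde\Lambda}\|$ simply vanishes; this truncation-and-padding device is the missing idea in your sketch.
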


\begin{rek}\normalfont
Curious readers may ask whether, for item (ii) of Theorem \ref{m1}, the number $1$ in $G^{a_m, 1, t}_m(x)$ can be replaced by larger numbers. In general, the answer is negative. For example, let $\mathcal{B} = (e_n)_{n=1}^\infty$ be a quasi-greedy basis and set $x := e_1$. If $m > \max A$, then $\{2, 3, \ldots, m+1\}$ is an $(a_m, 2, t)$-greedy set of $x$ of order $m$ for any $(a_m)\subset A$. Set $G^{a_m, 2, t}_m(x) = P_{\{2, 3, \ldots, m+1\}}(x)$. Then $G^{a_m, 2, t}_m(x) \equiv 0$ and so, $G^{a_m, 2, t}_m(x)\not\rightarrow x$.
\end{rek}

\section{Preliminaries}

The following observation about democracy simplifies several of our proofs.
\begin{prop}\label{di}
Let $\mathcal{B}$ be a basis in a Banach space.
If there exists a constant $C$ such that $\|1_A\|\le C\|1_B\|$ for all finite and disjoint sets $A, B\subset\mathbb{N}$ with $|A| = |B|$, then 
$\mathcal{B}$ is $C^2$-democratic. 
\end{prop}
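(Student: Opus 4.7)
The plan is to reduce the general case (arbitrary finite sets $A,B$ with $|A|=|B|$) to the hypothesis, which only controls norms for disjoint pairs. The obstruction is that $A$ and $B$ may overlap, so I cannot apply the hypothesis directly to the pair $(A,B)$. The standard remedy is to insert a third set $D$ as a ``bridge'' that is simultaneously disjoint from both $A$ and $B$ and has the same cardinality.

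More precisely, given finite sets $A,B\subset\mathbb{N}$ with $|A|=|B|=m$, the set $A\cup B$ is finite, so $\mathbb{N}\setminus(A\cup B)$ is infinite. Hence I can choose some $D\subset\mathbb{N}\setminus(A\cup B)$ with $|D|=m$. By construction, $A\cap D=\emptyset$ and $B\cap D=\emptyset$, so the hypothesis applies twice: once to the disjoint pair $(A,D)$, giving $\|1_A\|\le C\|1_D\|$, and once to the disjoint pair $(D,B)$, giving $\|1_D\|\le C\|1_B\|$. Chaining these two inequalities yields $\|1_A\|\le C^2\|1_B\|$, which is exactly the $C^2$-democracy inequality \eqref{e110901}.

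There is no real obstacle here beyond the existence of $D$, which is immediate because $\mathbb{N}$ is infinite while $A\cup B$ is finite. Note that the hypothesis is symmetric in form (``for all disjoint pairs''), so when applied to the pair $(D,B)$ with $|D|=|B|$, we are free to bound $\|1_D\|$ in terms of $\|1_B\|$, not the reverse. This gives the desired direction of the inequality.
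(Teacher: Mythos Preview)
Your proof is correct and follows essentially the same approach as the paper: both arguments insert an auxiliary set of the same cardinality that is disjoint from $A$ and $B$, then apply the hypothesis twice and chain the inequalities. The only cosmetic difference is that the paper picks the bridge set $G$ with $\min G > \max(A\cup B)$, whereas you take any $D\subset\mathbb{N}\setminus(A\cup B)$.
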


\begin{proof}
Let $E$ and $F\subset \mathbb{N}$ be finite (not necessarily disjoint) sets with $|E| = |F|$. Choose a set $G\subset\mathbb{N}$ with $\min G > \max (E\cup F)$ and $|G| = |E| = |F|$. By assumption, we have $\|1_E\|\le C\|1_G\|$ and $\|1_G\|\le C\|1_F\|$. Therefore, $\|1_E\|\le C^2\|1_F\|$, which implies $C^2$-democracy. 
\end{proof}

The next proposition given as Definition 1.1.2 in \cite{AK} will be used to show that the bases in our examples are Schauder. 

\begin{prop}\label{schauDef}
Let $(e_n)_{n=1}^\infty$ be a sequence in a Banach space $\mathbb{X}$. If there is a sequence $(e_n^*)_{n=1}^\infty$ in $\mathbb{X}^*$ such that
\begin{enumerate}
    \item [(i)] $e_k^*(e_j) = \delta_{k,j}$ for all $k, j\in \mathbb{N}$,
    \item [(ii)] $x = \sum_{n=1}^\infty e_n^*(x)e_n$ for all $x\in \mathbb{X}$.
\end{enumerate}
Then $(e_n)_{n=1}^\infty$ is a Schauder basis for $\mathbb{X}$.
\end{prop}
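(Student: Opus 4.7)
The plan is to verify directly that $(e_n)_{n=1}^\infty$ satisfies the definition of a Schauder basis, namely that every $x\in \mathbb{X}$ admits a unique norm-convergent expansion $x = \sum_{n=1}^\infty a_n e_n$ with scalar coefficients $a_n$. Hypothesis (ii) already provides existence of such an expansion, with the explicit choice $a_n = e_n^*(x)$, so the task reduces entirely to proving uniqueness of the coefficients.

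For uniqueness, I would suppose that $x = \sum_{n=1}^\infty a_n e_n$ for some scalar sequence $(a_n)$. Since $e_k^* \in \mathbb{X}^*$ is continuous and linear, it commutes with the norm limit defining the series, so
$$e_k^*(x)\ =\ \lim_{N\to\infty} e_k^*\!\left(\sum_{n=1}^N a_n e_n\right)\ =\ \lim_{N\to\infty}\sum_{n=1}^N a_n\, e_k^*(e_n)\ =\ a_k,$$
where the last equality uses the biorthogonality relation $e_k^*(e_n) = \delta_{k,n}$ from (i). Hence each coefficient appearing in any valid expansion of $x$ is forced to equal $e_k^*(x)$, which combined with (ii) establishes the claimed uniqueness.

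There is no real obstacle in this argument: condition (i) is what allows us to read off the coefficients from the functionals $e_k^*$, and the continuity of those functionals (built into the hypothesis $e_n^* \in \mathbb{X}^*$) is precisely what permits the interchange with the limit. The only point worth flagging explicitly is that one must invoke continuity of $e_k^*$ \emph{before} using linearity, since the expansion in (ii) is a norm limit rather than a finite sum.
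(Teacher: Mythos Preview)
Your argument is correct and is the standard textbook proof. Note, however, that the paper does not actually prove this proposition: it simply quotes it as Definition~1.1.2 from Albiac--Kalton \cite{AK}, so there is no in-paper proof to compare against.
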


\begin{prop}\normalfont\label{rr1}
Let $(\mathbb{X}_n)_{n=1}^\infty$ be finite dimensional Banach spaces, each of which has a Schauder basis $\mathcal{B}_n$ with basis constant $\mathbf K_{b,n}$. Suppose that $\sup_{n} \mathbf K_{b,n} < \infty$. Then the concatenation of these $\mathcal{B}_n$'s is a Schauder basis for the sum $(\oplus_{n=1}^\infty \mathbb{X}_n)_{\ell_p}$ $(1\le p < \infty)$ and $(\oplus_{n=1}^\infty \mathbb{X}_n)_{c_0}$.
\end{prop}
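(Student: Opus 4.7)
The plan is to invoke Proposition~\ref{schauDef}. Write each $\mathcal{B}_n=(e_{n,1},\dots,e_{n,d_n})$ with associated biorthogonal functionals $(e^*_{n,1},\dots,e^*_{n,d_n})\subset\mathbb{X}_n^*$, and enumerate the concatenation lexicographically as $(f_k)_{k=1}^\infty$: first the vectors of $\mathcal{B}_1$, then of $\mathcal{B}_2$, and so on. Let $\mathbb{X}$ denote either $(\oplus_n\mathbb{X}_n)_{\ell_p}$ or $(\oplus_n\mathbb{X}_n)_{c_0}$, and write a generic $x\in\mathbb{X}$ as $x=(x_n)_{n=1}^\infty$ with $x_n\in\mathbb{X}_n$. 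Whenever $f_k=e_{n,j}$, I would define the candidate dual functional by $f_k^*(x):=e^*_{n,j}(x_n)$; this is a bounded linear functional on $\mathbb{X}$ because the coordinate projection $x\mapsto x_n$ is contractive in both the $\ell_p$ and the $c_0$ direct-sum norms.

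First I would verify biorthogonality (condition (i) of Proposition~\ref{schauDef}): if $f_k=e_{n,j}$ and $f_\ell=e_{n',j'}$, then the $n$-th coordinate of $f_\ell$ vanishes when $n\neq n'$ and equals $e_{n,j'}$ when $n=n'$, so $f_k^*(f_\ell)=\delta_{k,\ell}$. For condition (ii), I would note that any $N\in\mathbb{N}$ can be written uniquely as $N=d_1+\dots+d_{k-1}+j$ with $0\le j\le d_k-1$, and with this decomposition the partial sum $\sum_{i=1}^N f_i^*(x)f_i$ equals the vector $(x_1,\dots,x_{k-1},S^{(k)}_j(x_k),0,0,\dots)$, where $S^{(k)}_j$ is the $j$-th partial-sum projection in $\mathbb{X}_k$ relative to $\mathcal{B}_k$ (with $S^{(k)}_0=0$). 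Setting $M:=1+\sup_n \mathbf{K}_{b,n}<\infty$, the uniform basis-constant hypothesis yields $\|x_k-S^{(k)}_j(x_k)\|_{\mathbb{X}_k}\le M\|x_k\|_{\mathbb{X}_k}$ for every $k$ and every $j$.

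Convergence then follows by a direct norm computation on the error vector $(0,\dots,0,x_k-S^{(k)}_j(x_k),x_{k+1},x_{k+2},\dots)$. For the $\ell_p$ sum,
\[
\Big\|x-\sum_{i=1}^N f_i^*(x)f_i\Big\|\;\le\;\Big(M^p\|x_k\|^p+\sum_{n>k}\|x_n\|^p\Big)^{1/p}\;\le\;M\Big(\sum_{n\ge k}\|x_n\|^p\Big)^{1/p},
\]
which tends to $0$ as $N\to\infty$ (hence $k\to\infty$) by summability of $(\|x_n\|^p)_{n\ge 1}$. For the $c_0$ sum the parallel estimate bounds the error by $M\sup_{n\ge k}\|x_n\|$, which tends to $0$ because $\|x_n\|\to 0$. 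Applying Proposition~\ref{schauDef} then yields that $(f_k)_{k=1}^\infty$ is a Schauder basis of $\mathbb{X}$.

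The only genuine obstacle is the partial-block error estimate: without the uniform bound $\sup_n\mathbf{K}_{b,n}<\infty$, the norm $\|S^{(k)}_j(x_k)\|$ could blow up as $k$ varies, so the error on the ``active'' block would not be controlled by $\|x_k\|$ with a uniform constant, and neither tail argument would close. Everything else is bookkeeping around the lexicographic enumeration of the concatenation.
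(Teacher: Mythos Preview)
Your proposal is correct and follows essentially the same route as the paper: both invoke Proposition~\ref{schauDef}, extend each $e^*_{n,j}$ to $\mathbb{X}$ via the contractive coordinate projection, and control the mid-block partial-sum error using the uniform bound on the basis constants. The only cosmetic difference is that you compute the tail norm in $\ell_p$ and $c_0$ explicitly (with constant $M=1+\sup_n\mathbf K_{b,n}$), whereas the paper bounds the error at position $(s,t)$ by a constant times the error after the completed block $s-1$ and then lets the latter go to zero.
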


\begin{proof}
Let $\mathcal{B}_n = (e_{n,j})_{j=1}^{N_n}$, where $N_n$ is the dimension of $\mathbb{X}_n$, and $\mathcal{B}_n^* = (e^*_{n,j})_{j=1}^{N_n}$. We need to show that the concatenation $\mathcal{B} = (e_{n, j})_{n\ge 1, 1\le j\le N_n}$ is a Schauder basis for $\mathbb{X} = (\oplus_{n=1}^\infty \mathbb{X}_n)_{\ell_p}$ $(1\le p < \infty)$ or $(\oplus_{n=1}^\infty \mathbb{X}_n)_{c_0}$. Choose $u\ge 1$, $1\le v\le N_u$ and consider $e^*_{u, v}: \mathbb{X}_u\rightarrow \mathbb{R}$. We can extend $e^*_{u, v}$ to $\mathbb{X}$ by setting 
$e^*_{u,v}((x_n)_{n=1}^\infty) = e^*_{u, v}(x_u)$ for all $(x_n)_{n=1}^\infty\in \mathbb{X}$ with $x_n \in \mathbb{X}_n$. Then $e^*_{u, v}\in \mathbb{X}^*$ because for all $x = (x_n)_{n=1}^\infty\in \mathbb{X}$, we have
$$|e^*_{u, v}(x)| \ =\ |e^*_{u,v}(x_u)|\ \le\ \|e^*_{u, v}\|_{\mathbb{X}^*_u}\|x_u\|_{\mathbb{X}_u}\ \le\ \|e^*_{u, v}\|_{\mathbb{X}^*_u}\|x\|_{\mathbb{X}}.$$
By Proposition \ref{schauDef}, it remains to show that for all $x = (x_n)_{n=1}^\infty\in \mathbb{X}$, 
$$x\ =\ \sum_{n=1}^\infty \sum_{m=1}^{N_n} e^*_{n,m}(x) e_{n,m}.$$
Indeed, pick $\varepsilon > 0$ and let $K := \sup \mathbf K_{b, n} < \infty$. Choose $u\in \mathbb{N}$ such that for all $v\ge u$
$$\left\|x-\sum_{n=1}^v\sum_{m=1}^{N_v}e^*_{n,m}(x) e_{n,m}\right\|_{\mathbb{X}}\ \le\ \frac{\varepsilon}{K}.$$
Therefore, for all $s \ge u+1$ and $1\le t\le N_s$, we obtain
\begin{align*}\left\|x-\sum_{n=1}^s\sum_{m=1}^t e^*_{n,m}(x) e_{n,m}\right\|_\mathbb{X}&\ \le\ K\left\|x-\sum_{n=1}^{s-1}\sum_{m=1}^{N_{s-1}} e^*_{n,m}(x) e_{n,m}\right\|_\mathbb{X}\ \le\ \varepsilon.
\end{align*}
This completes our proof. 
\end{proof}

The next lemma is useful in our proof of Theorem \ref{m2}. For its proof, see \cite[Corollary 10.2.11 and Theorem 10.2.12]{AK} and \cite{BBG}. The bounds in \cite{BBG} are tighter. 
\begin{lem}\label{gu}
Suppose that $\mathcal{B}$ is a $\mathbf C_q$-quasi-greedy basis in a Banach space $\mathbb X$. For finite sets $A, B\subset \mathbb{N}$ with $A\subset B$ and real numbers $(a_n)_{n\in A}$, we have
\begin{enumerate}
\item [(i)] 
\begin{equation}\label{b1}\left\|\sum_{n\in A}a_n e_n\right\|\ \le\ 2\mathbf C_q\max_{n\in A}|a_n|\left\|\sum_{n\in A}e_n\right\|.\end{equation}
\item [(ii)] \begin{equation}\label{b2}\min_{n\in A}| a_n|\left\|\sum_{n\in A}\varepsilon_ne_n\right\|\ \le\ 2\mathbf C_q\left\|\sum_{n\in A}a_ne_n\right\|,\end{equation}
where $\varepsilon_n  = \mbox{sign}(a_n)$.
\item [(iii)] \begin{equation}
    \label{b3}\|1_{\varepsilon A}\|\ \le \ 2\mathbf{C}_q\|1_{\eta B}\|,
\end{equation}
for all $\varepsilon, \eta\in \{\pm 1\}^{\mathbb{N}}$.
\end{enumerate}
\end{lem}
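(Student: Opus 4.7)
My plan is to derive the three estimates in the order (iii) $\Rightarrow$ (i) $\Rightarrow$ (ii), using Wojtaszczyk's characterization $\|\mathcal{G}_m(x)\| \le \mathbf{C}_q\|x\|$ together with the perturbation remark quoted just before the lemma, namely that the same bound applies to arbitrary greedy projections $P_E(x)$ whenever $E$ is a valid greedy set for $x$. The key observation I would exploit throughout is that a signed characteristic vector $1_{\eta B}$ has constant-magnitude coefficients, so \emph{every} subset of $B$ is a greedy set for $1_{\eta B}$; this is precisely how the quasi-greedy hypothesis is converted into the lattice-type estimate (iii).

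The first step is to prove (iii). In the special case $A = B$, I would decompose two sign patterns $\varepsilon, \eta$ on $A$ into their agreeing and disagreeing parts: writing $A_{+} = \{n \in A : \varepsilon_n = \eta_n\}$ and $A_{-} = A \setminus A_{+}$,
\begin{equation*}
1_{\varepsilon A}\ =\ P_{A_+}(1_{\eta A}) - P_{A_-}(1_{\eta A}),
\end{equation*}
and since both projections on the right are greedy for $1_{\eta A}$, each has norm at most $\mathbf{C}_q\|1_{\eta A}\|$. The triangle inequality then gives $\|1_{\varepsilon A}\| \le 2\mathbf{C}_q\|1_{\eta A}\|$. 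For the general case $A \subsetneq B$, I would first extract $1_{\eta|_A A} = P_A(1_{\eta B})$ as a greedy projection of $1_{\eta B}$ and then apply the same-set bound, combining the two projection steps carefully so that the resulting constant stays at $2\mathbf{C}_q$.

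Next, I would deduce (i) by a layer-cake decomposition. Setting $\alpha = \max_{n\in A}|a_n|$ and $\varepsilon_n = \mathrm{sign}(a_n)$, I would write
\begin{equation*}
\sum_{n\in A} a_n e_n\ =\ \alpha \int_0^1 1_{\varepsilon, E_t}\, dt, \quad E_t := \{n\in A : |a_n| \ge t\alpha\},
\end{equation*}
as a Bochner integral (or, if one prefers an elementary treatment, as its Riemann-sum approximation). Since each $E_t \subset A$, part (iii) yields $\|1_{\varepsilon, E_t}\| \le 2\mathbf{C}_q\|1_A\|$, and integrating in $t$ produces (i). For (ii) I would run the symmetric argument based on the identity
\begin{equation*}
\beta \cdot 1_{\varepsilon A}\ =\ x - \sum_{n\in A}\varepsilon_n(|a_n|-\beta)\,e_n,
\end{equation*}
with $\beta = \min_{n\in A}|a_n|$; the residual on the right has coefficients of magnitude at most $\alpha - \beta$ supported in $A$, and its norm is controlled by quasi-greediness of the truncation operator at threshold $\beta$ (which itself is handled by combining the greedy projection $P_{\Lambda_\beta}$ and its co-projection, where $\Lambda_\beta = \{n \in A : |a_n| > \beta\}$).

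The main obstacle I anticipate is tracking the constant $2\mathbf{C}_q$ rather than $\mathbf{C}_q^2$ or $4\mathbf{C}_q^2$. A naive chaining of sign-flip bounds and greedy-projection bounds yields too large a factor; the sharp constant requires packaging the greedy projection together with the corresponding co-projection into a single estimate, and invoking the perturbation remark to handle ties in coefficient magnitudes. This is the delicate point in the classical arguments of Albiac--Kalton (Corollary 10.2.11 and Theorem 10.2.12), and it is also where the refinement of \cite{BBG} tightens the constants further.
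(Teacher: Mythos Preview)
The paper does not supply a proof of this lemma; it only cites \cite[Corollary~10.2.11 and Theorem~10.2.12]{AK} and \cite{BBG}. Your overall strategy --- establish (iii) first, then feed it into a layer-cake representation for (i), then turn to (ii) --- is the standard one in those references, and your arguments for (iii) and (i) are essentially correct. One small point on (iii): the two-stage route you describe for $A\subsetneq B$ (first $P_A(1_{\eta B})$, then the same-set sign flip) naturally produces $2\mathbf C_q^2$. To keep the constant at $2\mathbf C_q$ you should do the general case in a single pass, writing $1_{\varepsilon A}=P_{A_+}(1_{\eta B})-P_{A_-}(1_{\eta B})$ with $A_\pm=\{n\in A:\varepsilon_n=\pm\eta_n\}\subset B$; both projections are then greedy for $1_{\eta B}$ directly.

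The real gap is in your sketch of (ii). Starting from $\beta\,1_{\varepsilon A}=x-r$ with $r=\sum_{n\in A}\varepsilon_n(|a_n|-\beta)e_n$ forces you to bound $\|r\|$ by a multiple of $\|x\|$, but $r$ is the soft-thresholded vector, and the boundedness of that operator is exactly what (ii) encodes; invoking ``the truncation operator at threshold $\beta$'' here is circular, and combining $P_{\Lambda_\beta}(x)$ with its co-projection does not recover $r$. The clean argument is the dual Abel summation: order $A=\{n_1,\dots,n_m\}$ with $|a_{n_1}|\ge\cdots\ge|a_{n_m}|=\beta$, set $b_k=1/|a_{n_k}|$ (non-decreasing, $b_m\le 1/\beta$), and write
\[
\tfrac{1}{\beta}\,1_{\varepsilon A}\ \text{ replaced by }\ 1_{\varepsilon A}=\sum_{k=1}^m b_k\,a_{n_k}e_{n_k}=b_m\,x+\sum_{k=1}^{m-1}(b_k-b_{k+1})P_{G_k}(x),
\]
where $G_k=\{n_1,\dots,n_k\}$ is a greedy set of $x$. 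Since $\|P_{G_k}(x)\|\le \mathbf C_q\|x\|$ (by the perturbation remark for ties), the triangle inequality gives $\|1_{\varepsilon A}\|\le b_m\|x\|+(b_m-b_1)\mathbf C_q\|x\|\le (1+\mathbf C_q)\|x\|/\beta\le 2\mathbf C_q\|x\|/\beta$. This is the ``symmetric'' argument you allude to, but the summands must be the greedy projections $P_{G_k}(x)$, not indicators $1_{\varepsilon G_k}$.
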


\section{Characterizations of greedy and almost greedy bases}

In this section, we assume that $f:\mathbb{N}\rightarrow \mathbb{R}$ is regular with $0 < c_1 \le \inf |f(n)| \le \sup |f(n)| \le c_2 < \infty$.

\subsection{Greedy bases - Proof of Theorem \ref{mm1}}
\begin{prop}\label{pfmm1} If $f(n)\neq 0$ for all $n$ and $\mathcal{B}$ satisfies \eqref{e110902}, then $\mathcal{B}$ is unconditional. 
\end{prop}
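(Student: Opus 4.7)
The plan is to establish suppression unconditionality with constant at most $C_f$, that is, $\|P_A(x)\| \leq C_f\|x\|$ for every $x \in \mathbb{X}$ and every finite $A \subset \mathbb{N}$. By a standard approximation using $x = \lim_N S_N(x)$ and the fact that $P_A(x) = P_A(S_N(x))$ as soon as $N \geq \max A$, it suffices to prove the inequality for $x \in \mathbb{X}_c$, and by replacing $A$ with $A \cap \supp(x)$, we may further assume $A \subset \supp(x)$.

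Fix such an $x$ with support $D$, set $m := |D \setminus A|$, and write $D \setminus A = \{n_1 < n_2 < \cdots < n_m\}$ so that $1_{f, D \setminus A} = \sum_{j=1}^m f(j) e_{n_j}$. The central construction is the perturbation
\[
y_\lambda\ :=\ x + \lambda\cdot 1_{f, D \setminus A}, \qquad \lambda > 0.
\]
Because $f(j) \neq 0$ for every $j$, for $\lambda$ sufficiently large (depending on $x$ and on the finitely many values $f(1),\dots,f(m)$) one has
\[
\lambda |f(j)|\ -\ |e_{n_j}^*(x)|\ >\ \max_{n \in A}|e_n^*(x)|, \qquad j=1,\dots,m,
\]
so the natural order-$m$ greedy set of $y_\lambda$ is exactly $D \setminus A$. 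Consequently,
\[
\mathcal G_m(y_\lambda)\ =\ P_{D \setminus A}(x)\, +\, \lambda\cdot 1_{f, D \setminus A}, \qquad y_\lambda - \mathcal G_m(y_\lambda)\ =\ P_A(x).
\]

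Applying the hypothesis \eqref{e110902} to $y_\lambda$ at order $m$ gives $\|P_A(x)\| \leq C_f\,\mathcal D^f_m(y_\lambda)$. To bound the right-hand side, test $\mathcal D^f_m(y_\lambda)$ against $B := D \setminus A$ with scalar $\mu = \lambda$: then $y_\lambda - \mu\cdot 1_{f, B} = x$, hence $\mathcal D^f_m(y_\lambda) \leq \|x\|$. Combining the two estimates yields $\|P_A(x)\| \leq C_f\|x\|$, which is the desired suppression unconditionality. The only point requiring care is verifying that $D \setminus A$ is in fact the natural greedy set of $y_\lambda$ at order $m$; this is a straightforward consequence of each $f(j)$ being nonzero, so the full regularity of $f$ is \emph{not} needed here. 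The argument parallels the classical Konyagin--Temlyakov and Bern\'a--Blasco proofs, with the indicator $1_A$ replaced by the weighted test vector $1_{f, D \setminus A}$, whose nonvanishing coefficients play exactly the same amplifying role.
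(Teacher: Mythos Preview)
Your proof is correct and follows essentially the same route as the paper's: the paper also perturbs a finitely supported $x$ by adding a large multiple of $1_{f,\,\supp(x)\setminus B}$ so that the order-$m$ greedy sum strips off exactly that piece, leaving $P_B(x)$, and then bounds $\mathcal{D}^f_m$ by undoing the perturbation. The only cosmetic differences are in notation (the paper's $A,B$ correspond to your $D,A$) and that the paper specifies the explicit threshold $\alpha>2\max_n|e_n^*(x)|/\min_{n\le k}|f(n)|$ rather than invoking ``$\lambda$ sufficiently large.''
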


\begin{proof}
Let $x\in \mathbb{X}_c$ with $\supp (x) = A$, $|A| = k$, and $B\subset A$. Fix $$\alpha \ >\ 2\max_n|e_n^*(x)|/\min_{n\le k} |f(n)|.$$ Set 
$$y\ =\ x + \alpha1_{f, A\backslash B} = (P_{A\backslash B}(x) + \alpha1_{f, A\backslash B}) + P_B(x).$$
Observe that for all $n_0\in A\backslash B$, 
$$|e_{n_0}^*(y)| \ \ge \ \alpha \min_{n\le k} |f(n)| - |e_{n_0}^*(x)|  \ >\ \max_{n}|e_n^*(x)| \ \ge\ \max_{n\in B}|e_n^*(x)|.$$
Let $|A\backslash B| = m$. Then $\mathcal{G}_m(y) = P_{A\backslash B}(x) + \alpha1_{f, A\backslash B}$. By \eqref{e110902}, we have
$$\|P_B(x)\|\ =\ \|y-\mathcal{G}_m(y)\|\ \le\ C_f\mathcal{D}_m^f(y)\ \le\ C_f\|y - \alpha 1_{f, A\backslash B}\|\ =\ C_f\|x\|.$$
This shows that $\mathcal{B}$ is unconditional. 
\end{proof}

\begin{prop}\label{equivfdemo}
Let $\mathcal{B}$ be unconditional with unconditional constant $\mathbf K_u$. Then
\begin{enumerate}
    \item [(i)] if $\mathcal{B}$ is $C_{d, f}$-$f$-democratic, then $\mathcal{B}$ is $\frac{c_2}{c_1}C_{d,f}\mathbf K_u^2$-democratic,
    \item [(ii)] if $\mathcal{B}$ is $C_d$-democratic, then $\mathcal{B}$ is $\frac{c_2}{c_1}C_d\mathbf K_u^2$-$f$-democratic.
\end{enumerate}
\end{prop}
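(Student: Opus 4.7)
The strategy is simple: since $1_A$ and $1_{f,A}$ are supported on the same set $A$, and the coefficients of $1_{f,A}$ lie in the interval $[c_1, c_2]$ (in modulus) while the coefficients of $1_A$ are all $\pm 1$, unconditionality lets us compare these two vectors by absorbing only a factor depending on $\mathbf K_u$, $c_1$, and $c_2$. Once we can freely convert between $\|1_A\|$ and $\|1_{f,A}\|$, item (i) is obtained by sandwiching an application of $f$-democracy between two such conversions, and item (ii) by sandwiching an application of democracy between two such conversions.

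More precisely, the plan is to establish the elementary comparison
\[
\|1_A\|\ \le\ \frac{\mathbf K_u}{c_1}\|1_{f,A}\|\qquad\text{and}\qquad \|1_{f,A}\|\ \le\ c_2\mathbf K_u\|1_A\|,
\]
valid for every finite $A\subset\mathbb N$. Both inequalities follow directly from the characterization of unconditionality via the constant $\mathbf K_u$: in the first, use that $|1|\le |f(j)|/c_1$ coefficient by coefficient, so $\|1_A\|\le \mathbf K_u\|c_1^{-1}1_{f,A}\|$; in the second, use that $|f(j)|\le c_2$, so $\|1_{f,A}\|\le \mathbf K_u\|c_2\cdot 1_A\|$.

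For (i), given $|A|=|B|$, chain these comparisons together with $f$-democracy:
\[
\|1_A\|\ \le\ \frac{\mathbf K_u}{c_1}\|1_{f,A}\|\ \le\ \frac{\mathbf K_u}{c_1}C_{d,f}\|1_{f,B}\|\ \le\ \frac{c_2\mathbf K_u^2}{c_1}C_{d,f}\|1_B\|.
\]
For (ii), do the symmetric chain with democracy in the middle:
\[
\|1_{f,A}\|\ \le\ c_2\mathbf K_u\|1_A\|\ \le\ c_2\mathbf K_u C_d\|1_B\|\ \le\ \frac{c_2\mathbf K_u^2}{c_1}C_d\|1_{f,B}\|.
\]

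There is no real obstacle here beyond being careful with the order of the coefficients in the definition $1_{f,A}=\sum_{j=1}^{m}f(j)e_{n_j}$: because $f(j)$ depends on the enumeration position $j$ rather than on the basis index $n_j$, one should note that the coefficient-wise inequalities $c_1\le|f(j)|\le c_2$ depend only on $j$, and so unconditionality applies verbatim to each vector $1_{f,A}$ independently of how $A$ is enumerated. Regularity of $f$ is used in an essentially unavoidable way; if $\inf_n|f(n)|=0$ or $\sup_n|f(n)|=\infty$, either comparison between $1_A$ and $1_{f,A}$ breaks down, and indeed (as mentioned in the paper) the equivalence is known to fail in that regime.
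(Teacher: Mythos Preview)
Your proof is correct and follows essentially the same approach as the paper: both arguments use unconditionality to pass between $\|1_A\|$ and $\|1_{f,A}\|$ via the coefficient bounds $c_1\le|f(j)|\le c_2$, and then sandwich ($f$-)democracy between two such comparisons to obtain the stated constants. The paper routes through the intermediate quantities $\min_{1\le n\le m}|f(n)|$ and $\max_{1\le n\le m}|f(n)|$ before invoking $c_1,c_2$, but this is a purely cosmetic difference.
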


\begin{proof}
We prove (i). Let $A, B\subset \mathbb{N}$ with $|A| = |B| = m$. Write $A = \{n_1, \ldots, n_m\}$ and $B = \{\ell_1, \ldots, \ell_m\}$. 
We have
\begin{align*}
    \min_{1\le n\le m}|f(n)|\|1_{A}\|&\ =\ \left\|\sum_{j=1}^m \min_{1\le n\le m}|f(n)|e_{n_j}\right\|\ \le\ \sup_{\theta_j\in \{\pm 1\}}\left\|\sum_{j=1}^m \theta_jf(j)e_{n_j}\right\|\\
    &\ \le\ \mathbf K_u\left\|\sum_{j=1}^m f(j)e_{n_j}\right\| \ \le\ \mathbf K_uC_{d, f}\left\|\sum_{j=1}^m f(j)e_{\ell_j}\right\|\\
    &\ \le\ \mathbf K_u^2C_{d, f}\max_{1\le n\le m}|f(n)|\left\|1_B\right\|.
\end{align*}
We have shown that 
$$\|1_A\|\ \le\ \frac{c_2}{c_1}\mathbf K_u^2C_{d,f}\|1_B\|.$$

We prove (ii). Let $A, B\subset \mathbb{N}$ with $|A| = |B| = m$. We have
\begin{align*}
    \|1_{f, A}\| &\ \le\ \max_{1\le n\le m}|f(n)| \mathbf K_u\|1_A\|\ \le\ C_d\max_{1\le n\le m}|f(n)| \mathbf K_u\|1_B\|\\
    &\ \le\ C_d\frac{\max_{1\le n\le m}|f(n)|}{\min_{1\le n\le m}|f(n)|}\mathbf K^2_u\|1_{f, B}\|\ \le\ C_d\frac{c_2}{c_1}\mathbf K_u^2\|1_{f, B}\|.
\end{align*}
This completes our proof.
\end{proof}

\begin{proof}[Proof of Theorem \ref{mm1}] Let $f$ be regular. Assume that $\mathcal{B}$ satisfies \eqref{e110902}.
Due to Propositions \ref{pfmm1} and \ref{equivfdemo}, it suffices to show that $\mathcal{B}$ is $f$-democratic. 
Let $A, B\subset \mathbb{N}$ with $|A| = |B| = m$. We shall show that $\|1_{f, A}\|\le C_{d,f}\|1_{f, B}\|$ for some constant $C_{d,f}$. Choose $D\subset \mathbb{N}$ such that $\min D > \max (A\cup B)$ and $|D| = m$. For $\varepsilon > 0$, set $$x\ :=\ (c_2/c_1 + \varepsilon)1_{f, D} + 1_{f, A}.$$
Observe that for all $n\in D$, 
$$|e_n^*(x)|\ =\ (c_2/c_1 + \varepsilon)|e_n^*(1_{f,D})|\ \ge\ (c_2/c_1 + \varepsilon)c_1 = c_2 + \varepsilon c_1.$$
Hence, $\mathcal{G}_m(x) = (c_2/c_1 + \varepsilon)1_{f, D}$ and so,
\begin{align*}
    \|1_{f, A}\| \ =\ \|x - \mathcal{G}_m(x)\|\ \le\ C_f\mathcal{D}_m^f(x)\ \le\ C_f\|x- 1_{f, A}\|\ =\ C_f(c_2/c_1 + \varepsilon)\|1_{f, D}\|.
\end{align*}
Letting $\varepsilon\rightarrow 0$, we get $\|1_{f, A}\|\le \frac{c_2}{c_1}C_f\|1_{f, D}\|$. Similarly, $\|1_{f, D}\|\le \frac{c_2}{c_1}C_f\|1_{f, B}\|$. Therefore, $\|1_{f, A}\|\le (\frac{c_2}{c_1}C_f)^2\|1_{f, B}\|$, as desired. 
\end{proof}

\subsection{Almost greedy bases - Proof of Theorem \ref{m2}}

\begin{prop}
If $\mathcal{B}$ satisfies \eqref{e110702}, then 
$$\|x-G_{\lceil \lambda m\rceil}(x)\|\ \le\ C \mathcal{D}_m^{f}(x), \forall m\in \mathbb{N}, \forall x\in \mathbb{X},$$
for every greedy sum $G_{\lceil \lambda m\rceil}(x)$.
\end{prop}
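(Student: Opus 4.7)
The plan is a standard perturbation argument, in the same spirit as the derivation of \eqref{e110801} for arbitrary greedy sums from the natural one (see \cite[Lemma 10.2.6]{AK}). Fix $x\in\mathbb{X}$, $m\in\mathbb{N}$, and an arbitrary greedy sum $G_{\lceil\lambda m\rceil}(x)=P_\Gamma(x)$ of order $\lceil\lambda m\rceil$, so $|\Gamma|=\lceil\lambda m\rceil$ and $\min_{n\in\Gamma}|e_n^*(x)|\ge\max_{n\notin\Gamma}|e_n^*(x)|$. The goal is to build a small perturbation $x_\delta$ of $x$ for which $\Gamma$ is the \emph{unique} natural greedy set of order $\lceil\lambda m\rceil$, apply the hypothesis \eqref{e110702} to $x_\delta$, and then let the perturbation vanish.

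For $\delta>0$, define
$$x_\delta\ :=\ x+\delta\sum_{n\in\Gamma}\varepsilon_n e_n,$$
where $\varepsilon_n:=\mathrm{sign}(e_n^*(x))$ when $e_n^*(x)\neq 0$ and $\varepsilon_n:=1$ otherwise. Then $|e_n^*(x_\delta)|=|e_n^*(x)|+\delta$ for $n\in\Gamma$ while $|e_n^*(x_\delta)|=|e_n^*(x)|$ for $n\notin\Gamma$, so for every $\delta>0$ we have a strict separation $\min_{n\in\Gamma}|e_n^*(x_\delta)|>\max_{n\notin\Gamma}|e_n^*(x_\delta)|$. Consequently $\Lambda_{\lceil\lambda m\rceil}(x_\delta)=\Gamma$ and $\mathcal{G}_{\lceil\lambda m\rceil}(x_\delta)=P_\Gamma(x_\delta)$. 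A direct computation gives
$$x_\delta-\mathcal{G}_{\lceil\lambda m\rceil}(x_\delta)\ =\ x_\delta-P_\Gamma(x_\delta)\ =\ x-P_\Gamma(x)\ =\ x-G_{\lceil\lambda m\rceil}(x),$$
since the perturbation is supported in $\Gamma$ and therefore cancels on both sides.

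It remains to control $\mathcal{D}_m^f(x_\delta)$. For any finite $B\subset\mathbb{N}$ with $|B|=m$ and any $y\in [1_{f,B}]$, the triangle inequality yields $\|x_\delta-y\|\le\|x-y\|+\delta\bigl\|\sum_{n\in\Gamma}\varepsilon_n e_n\bigr\|$, hence
$$\mathcal{D}_m^f(x_\delta)\ \le\ \mathcal{D}_m^f(x)+\delta\left\|\sum_{n\in\Gamma}\varepsilon_n e_n\right\|.$$
Applying hypothesis \eqref{e110702} to $x_\delta$ and combining with the identity above gives
$$\|x-G_{\lceil\lambda m\rceil}(x)\|\ \le\ C\,\mathcal{D}_m^f(x_\delta)\ \le\ C\mathcal{D}_m^f(x)+C\delta\left\|\sum_{n\in\Gamma}\varepsilon_n e_n\right\|.$$
Letting $\delta\to 0^+$ yields the desired bound $\|x-G_{\lceil\lambda m\rceil}(x)\|\le C\mathcal{D}_m^f(x)$. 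There is no real obstacle here; the only point to watch is the tie-breaking for coordinates on which $e_n^*(x)=0$, which is handled by simply assigning $\varepsilon_n=1$ in the definition of $x_\delta$ so that the strict separation of coefficients holds for every $\delta>0$.
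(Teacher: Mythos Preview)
Your proof is correct and follows the same perturbation strategy as the paper: perturb $x$ so that the given greedy set becomes the \emph{natural} greedy set, apply \eqref{e110702}, and let the perturbation vanish. Your execution is in fact a bit cleaner than the paper's, since you perturb only on $\Gamma$ and thereby get the exact identity $x_\delta-P_\Gamma(x_\delta)=x-P_\Gamma(x)$, whereas the paper first passes to a finitely supported approximation $P_B(x)$ before perturbing, which introduces an extra $\varepsilon$-term to track.
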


\begin{proof}
Let $x\in \mathbb{X}$ and $G_{\lceil \lambda m\rceil}(x) = P_A(x)$. Let $\varepsilon > 0$. Choose a finite set $B\subset \mathbb{N}$ such that $A\subset B$ and 
$\|P_B(x) - x\| < \varepsilon$. Perturb $P_B(x)$ to obtain $y$ such that $P_A(x)$ is a strictly greedy sum of $y$, and $\|y-P_B(x)\| < \varepsilon$. By \eqref{e110702}, we have
\begin{align*}\|x - G_{\lceil \lambda m\rceil}(x)\|&\ \le\ \|x-P_B(x)\| + \|P_B(x) - y\| + \|y - P_A(x)\|\\
&\ \le\ 2\varepsilon + C\mathcal{D}_m^{f}(y).
\end{align*}
Let $z = \alpha 1_{f, D}$ for some scalar $\alpha$ and $|D| = m$. We have
$$\|y - z\|\ \le\ \|x-y\| + \|x-z\| \ <\ 2\varepsilon + \|x-z\|.$$
Hence, $\mathcal{D}_m^{f}(y) \ \le\ 2\varepsilon + \mathcal{D}_m^{f}(x)$. We obtain
$$\|x - G_{\lceil \lambda m\rceil}(x)\|\ \le\ 2(C+1)\varepsilon + C\mathcal{D}_m^{f}(x).$$
Letting $\varepsilon \rightarrow 0$, we have the desired result. 
\end{proof}

\begin{proof}[Proof of Theorem \ref{m2}]
Assume (i). By Theorem \ref{dkkt03}, we have \eqref{e110703}. Since $\sigma_m\le \mathcal{D}^f_m$, we have \eqref{e110704}. Hence, (i) implies (ii). That (ii) implies (iii) is trivial. We shall prove that (iii) implies (i). Assume (iii). Fix $x\in \mathbb{X}$. 

\underline{Quasi-greediness}: Let $G_m(x)$ be a greedy sum of $x$. Choose the non-negative integer $k$ satisfying $\lceil\lambda k\rceil \le m < \lceil \lambda (k+1)\rceil$. Then 
$$m - \lceil\lambda k\rceil \ \le\ \lceil\lambda (k+1)\rceil - \lceil\lambda k\rceil - 1 \ \le\ \lambda.$$
We have
\begin{align*}\|G_m(x)\|&\ \le\ \|x\| + \|x - G_{\lceil\lambda k\rceil}(x)\| + \|G_{\lceil\lambda k\rceil}(x) - G_m(x)\|\\
&\ \le\ \|x\| + C\mathcal{D}^f_k(x) + \lambda \mathbf k\|x\|\ \le\ (1+C+\lambda \mathbf k)\|x\|,
\end{align*}
where $\mathbf k := \sup_{n} \|e_n\|\|e_n^*\|$.
Hence, $\mathcal{B}$ is quasi-greedy with constant $\mathbf C_q\le 1+C+\lambda\mathbf k$. 

\underline{Democracy}: We prove democracy after proving the following claims. 

Claim 1: Let $A, B\subset \mathbb{N}$ with $|A| = m$, $|B| = \lceil\lambda m\rceil$, and $A\cap B = \emptyset$. For some absolute constant $C_{d, f}$, it holds that 
$$\|1_A\|\ \le\ C_{d, f}\|1_B\|.$$  

\begin{proof}For $\varepsilon > 0$, set 
$$x\ :=\ (c_2/c_1 + \varepsilon)1_{f, B} + 1_{f, A}.$$
It is easy to check that $\mathcal{G}_{\lceil\lambda m\rceil}(x) = (c_2/c_1 + \varepsilon)1_{f, B}$. Hence, 
$$\|1_{f, A}\| \ =\ \|x - \mathcal{G}_{\lceil\lambda m\rceil}(x)\| \ \le\ C \mathcal{D}_m^{f}(x)\ \le\ C\|x - 1_{f, A}\|\ =\ \left(\frac{c_2}{c_1}+\varepsilon\right)C\|1_{f, B}\|.$$
Letting $\varepsilon \rightarrow 0$ to obtain
\begin{equation}\label{eee11}\|1_{f, A}\|\ \le\ \frac{c_2}{c_1}C\|1_{f, B}\|.\end{equation}
By Lemma \ref{gu}, we have
\begin{equation}\label{eee2}\frac{c_1}{4\mathbf C_q^2}\|1_A\|\ \le \ \|1_{f, A}\|\mbox{ and } \|1_{f, B}\|\ \le\ 2\mathbf C_qc_2\|1_B\|.\end{equation}
From \eqref{eee11} and \eqref{eee2}, we obtain 
$$\|1_A\|\ \le\ 8\left(\frac{c_2}{c_1}\right)^2C\mathbf C_q^3\|1_B\|.$$
Set $C_{d, f} = 8\left(c_2/c_1\right)^2C\mathbf C_q^3$ and we are done. 
\end{proof}

Claim 2: Let $A, B\subset \mathbb{N}$ be disjoint, finite sets with $\lambda^2 +1 < |B|$ and $|A| = \lceil \lambda |B|\rceil - |B|$. It holds that $\|1_{A}\|\le \Delta_{d}\|1_{B}\|$ for some absolute constant $\Delta_d$.

\begin{proof}
Let $v = \lceil \lambda^2\rceil$ and $|B| = m > \lambda^2+1$. Partition $A$ into $v$ sets $A_1, A_2, \ldots, A_v$ such that for all $1\le j\le v$, we have
$$|A_j|\ \le\ \frac{\lceil \lambda m\rceil - m}{\lambda^2} + 1 \ <\ \frac{m}{\lambda}.$$
Hence, $\lambda |A_j| < m = |B|$. Choose $B_j\subset B$ such that $|B_j| = \lceil \lambda |A_j|\rceil$ and $\max B_j < \min (B\backslash B_j)$. We have
\begin{align*}\|1_{A}\|&\ \le \ \sum_{j=1}^v \|1_{A_j}\|\ \le\ \sum_{j=1}^v C_{d, f}\|1_{B_j}\| \mbox{ due to Claim 1}\\
&\ \le\ \mathbf K_bC_{d,f}\sum_{j=1}^v\|1_{B}\|\ =\ \mathbf K_bC_{d,f}\lceil \lambda^2\rceil\|1_{B}\|.\end{align*}
Set $\Delta_d =   \mathbf K_bC_{d,f}\lceil \lambda^2\rceil$ and we are done. 
\end{proof}

We are ready to prove democracy. Let $A, B\subset \mathbb{N}$ be disjoint sets with $|A| = |B| = m > \lambda^2+1$. Choose $D\subset\mathbb{N}$ such that $\min D > \max (A\cup B)$ and $|D| = \lceil \lambda m\rceil - m$. From Claims 1 and 2, we obtain
$$\|1_{A}\|\ \le\ C_{d,f}\|1_{D\cup B}\|\ \le\ C_{d,f}(\|1_{B}\| + \|1_{D}\|)\ \le\ C_{d,f}(\|1_{B}\| + \Delta_d\|1_{B}\|).$$
Therefore, $\|1_{ A}\|\le C_{d,f}(1+\Delta_d)\|1_{B}\|$. We have shown disjoint democracy, which implies democracy according to Proposition \ref{di}.
\end{proof}

\section{Undemocratic (thus, non-greedy) bases that satisfy \eqref{e110902}}\label{expo}

\begin{defi}\normalfont
A function $f$ is said to be \textbf{sparse} if there exists $g:\mathbb{N}\rightarrow \mathbb{R}_{>0}$ satisfying
\begin{enumerate}
    \item $\lim_{n\rightarrow\infty} g(n) = \infty$,
    \item $g(n)/n$ is decreasing or $\inf g(n)/n > 0$, and 
    \item for all nonempty, finite sets $A\subset \mathbb{N}$, we have
\begin{equation}\label{r301}\max_{n\in A}|f(n)|\ \ge \ \frac{g(|A|)}{|A|}\sum_{n\in A}|f(n)|.\end{equation}
\end{enumerate}
The function $g$ is called the \textbf{sparseness} of $f$.
\end{defi}

\begin{exa}\normalfont
We list several examples and nonexamples of sparse functions:
\begin{itemize}
    \item[(a)] The function $f\equiv 0$ is sparse with sparseness $g(n) = n$.
    \item[(b)] The function $f(n) = 2^{-n}$ is sparse with sparseness $g(n) = n/2$.
    \item[(c)] The function $f(n) = 3^n$ is sparse with sparseness $g(n) = 2n/3$.
    \item [(d)] If there exist constants $c_1, c_2$ such that $0 < c_1 \le \inf_{n}|f(n)|\le \sup_n |f(n)| \le c_2 < \infty$ (i.e., $f$ is regular), then $f$ is not sparse. 
    To see this, take a finite, nonempty set $A\subset \mathbb{N}$ with $|A| = m$. If $f$ is sparse with sparseness $g$, we have 
    $$c_2 \ \ge \ \max_{n\in A}|f(n)|\ \ge \ \frac{g(m)}{m}\sum_{n\in A}|f(n)|\ \ge\ g(m)c_1,$$
    which contradicts that $\lim_{n\rightarrow\infty} g(n) = \infty$.
    \item [(e)] The function $f(n) = n^{-2}$ is not sparse. Assume otherwise; \eqref{r301} implies that, for all $N\in \mathbb{N}$,
    $$\frac{1}{N^2} \ \ge\ \frac{g(N)}{N}\sum_{n=N}^{2N-1}\frac{1}{n^2}\ \ge\ \frac{g(N)}{N}\int_{N}^{2N} x^{-2}dx\ =\ \frac{g(N)}{2N^2},$$
    which implies that $g$ is bounded, contradicting $\lim_{n\rightarrow\infty} g(n) = \infty$.
\end{itemize}
\end{exa}

Let $f:\mathbb{N}\rightarrow \mathbb{R}$ be sparse with sparseness $g$. We now construct a Banach space with a non-greedy basis $\mathcal{B}$ that satisfies \eqref{e110902}.

We build a strictly increasing sequence $(N_k)_{k\ge 1}^\infty$ recursively. Let $N_0$ be the smallest positive integer such that $g(n) > e$ for all $n\ge N_0$, and for $k\ge 1$, let $N_k$ be a number satisfying  
$$N_k \ \ge\ g(N_{k-1})2^{N_{k-1}}\mbox{ and }g(N_k) \ >\ 3\log g(N_{k-1});$$
furthermore, if $\inf g(n)/n> c> 0$ for some $c>0$, then we also require 
$$N_k \ \ge\ \frac{3}{c}\log g(N_{k-1}).$$

For $k\ge 1$, let $\mathbb{X}_k$ be an $N_{k}$-dimensional sequence space equipped with the norm 
$$\|(x_1, \ldots, x_{N_k})\|_{\mathbb{X}_{k}} \ =\ \max\left\{\max_{i}|x_i|, \frac{\log g(N_{k-1})}{N_k}\sum_{i=1}^{N_k}|x_i|\right\}.$$
Let $\mathbb{X} = c_0(\oplus \mathbb{X}_k)$, which is the direct sum of $\mathbb{X}_k$ in the sense of $c_0$. Each vector $x$ in 
$\mathbb{X}$ is a sequence $(x_k)_{k=1}^\infty$, where $x_k\in \mathbb{X}_k$. We can also write $(x_k)_{k=1}^\infty$ as
$(x_k(i))_{\substack{k\in \mathbb{N}\\ 1\le i\le N_k}}$, where $x_k = (x_k(i))_{i=1}^{N_k}$. For each $n\in \mathbb{N}$, let 
$$Q_n(x) \ :=\ Q_n((x_k)_{k=1}^\infty) \ =\ x_n.$$
Since $\|x_n\|_{\mathbb{X}_n}\ \le\ \|x\|_{\mathbb{X}}$, we have $\|Q_n\| = 1$.  
Finally, we define blocks as a partition of $\mathbb{N}$: $M_1 = [1, N_1]$, $M_2 = [N_1+1, N_1+N_2]$, and $M_k = [\sum_{j=1}^{k-1} N_j + 1, \sum_{j=1}^k N_j]$ for all $k\ge 3$. With this notation, $Q_n(x)$ can be written as
$$Q_n(x) = P_{M_n} (x).$$
We consider the standard unit sequence, denoted by $\mathcal{B} = (e_n)_{n=1}^\infty$, where $e_n = (\delta_{k,n})_{k=1}^\infty$. By Proposition \ref{rr1}, $\mathcal{B}$ is a Schauder basis.

\begin{claim} The basis $\mathcal{B}$ is not democratic. \end{claim}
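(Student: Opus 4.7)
The plan is to exploit the two-term structure of the norm on each $\mathbb{X}_k$, which is the maximum of a sup-norm and an $\ell_1$-averaging norm. The averaging term heavily penalizes vectors whose support is concentrated in a single block, but is invisible to vectors whose support is spread across many distinct blocks. Concretely, I will produce, for each large $k$, two sets $A_k, B_k\subset\mathbb{N}$ with $|A_k|=|B_k|=N_k$ such that $\|1_{A_k}\|/\|1_{B_k}\|\to\infty$.

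First, since $\mathbb{X}=c_0(\oplus\mathbb{X}_k)$, its norm is $\|x\|_{\mathbb{X}}=\sup_{k}\|P_{M_k}(x)\|_{\mathbb{X}_k}$. For the \emph{concentrated} set, take $A_k:=M_k$. Then $1_{A_k}$ is the all-ones vector sitting in the $k$-th coordinate block, so directly from the definition of $\|\cdot\|_{\mathbb{X}_k}$,
$$\|1_{A_k}\|_{\mathbb{X}}\ =\ \max\!\left\{1,\ \frac{\log g(N_{k-1})}{N_k}\sum_{i=1}^{N_k}1\right\}\ =\ \max\{1,\ \log g(N_{k-1})\}\ =\ \log g(N_{k-1}),$$
where the last equality uses $g(N_0)>e$, hence $\log g(N_{k-1})>1$ for every $k\ge 1$.

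For the \emph{spread} set, pick one index from each of the blocks $M_{k+1},M_{k+2},\ldots,M_{k+N_k}$ and let $B_k$ be the resulting $N_k$-element set. For every block $M_{k'}$ with $k+1\le k'\le k+N_k$, the restriction $P_{M_{k'}}(1_{B_k})$ is a single unit vector in $\mathbb{X}_{k'}$, so
$$\|P_{M_{k'}}(1_{B_k})\|_{\mathbb{X}_{k'}}\ =\ \max\!\left\{1,\ \frac{\log g(N_{k'-1})}{N_{k'}}\right\}\ =\ 1,$$
the second term being at most $1$ thanks to the construction $N_{k'}\ge g(N_{k'-1})2^{N_{k'-1}}\gg \log g(N_{k'-1})$. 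All other blocks contribute $0$, so $\|1_{B_k}\|_{\mathbb{X}}=1$ by the $c_0$-sup structure.

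Combining the two computations gives
$$\frac{\|1_{A_k}\|}{\|1_{B_k}\|}\ =\ \log g(N_{k-1})\ \xrightarrow{k\to\infty}\ \infty,$$
since $N_{k-1}\to\infty$ and $g(n)\to\infty$. Therefore $\mathcal{B}$ cannot be democratic. There is no real obstacle; the only point requiring care is verifying the inequality $\log g(N_{k'-1})\le N_{k'}$ used in the spread-set calculation, which is immediate from the recursive growth condition imposed when the sequence $(N_k)$ was built.
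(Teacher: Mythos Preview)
Your proof is correct and follows essentially the same strategy as the paper: both take $A_k=M_k$ to obtain $\|1_{A_k}\|_{\mathbb{X}}=\log g(N_{k-1})$ and then exhibit a comparison set $B_k$ of the same cardinality with $\|1_{B_k}\|_{\mathbb{X}}=1$. The only difference is cosmetic---the paper places all of $B_k$ inside the single larger block $M_{k+1}$ (using $N_k\ll N_{k+1}$ so that the $\ell_1$-averaging term stays below $1$), whereas you spread $B_k$ one point per block across $M_{k+1},\ldots,M_{k+N_k}$; either choice works and the conclusion is identical.
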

\begin{proof}
Let $A_k = M_k$ and $B_k\subset M_{k+1}$ with $|B_k| = |A_k| = |M_k|$. Then $$\|1_{A_k}\|_{\mathbb{X}} \ =\ \|1_{A_k}\|_{\mathbb{X}_k}\ =\ \log g(N_{k-1}).$$
Also, 
$$\|1_{B_k}\|_{\mathbb{X}}\ =\ \|1_{B_k}\|_{\mathbb{X}_{k+1}}\ =\ \max\left\{1, \frac{\log g(N_{k})}{N_{k+1}}N_k\right\}\ \le\ \max\left\{1, \frac{N_k\log g(N_{k})}{g(N_k)2^{N_k}}\right\} \ =\ 1,$$
for sufficiently large $k$.
Since 
$$\frac{\|1_{A_k}\|_{\mathbb{X}}}{\|1_{B_k}\|_{\mathbb{X}}}\ =\ \log g(N_{k-1}) \rightarrow \infty \mbox{ as }k\rightarrow\infty,$$
we know that $\mathcal{B}$ is not democratic. 
\end{proof}

\begin{claim} The basis $\mathcal{B}$ satisfies \eqref{e110902}; indeed, for all $x\in \mathbb{X}$ and $m\in\mathbb{N}$, it holds that
$$\|x - \mathcal{G}_m(x)\|_\mathbb{X}\le\ 2\mathcal{D}^f_m(x).$$
\end{claim}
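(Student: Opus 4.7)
The plan is to fix an arbitrary comparison vector $y = \alpha 1_{f,B}$ with $|B| = m$ and prove $\|x-\mathcal{G}_m(x)\|_{\mathbb{X}} \le 2\|x-y\|_{\mathbb{X}}$; taking the infimum over $(B,\alpha)$ then yields the claim. The first preliminary is that each block norm $\|\cdot\|_{\mathbb{X}_k}$ depends only on the absolute values of coordinates, so the basis $\mathcal{B}$ is $1$-unconditional; in particular $\|P_E z\|_{\mathbb{X}}\le \|z\|_{\mathbb{X}}$ for every $E\subset\mathbb{N}$ and $z\in\mathbb{X}$. Writing $A = \Lambda_m(x)$, $t = \min_{n\in A}|e_n^*(x)|$, $B_1 = B\setminus A$, $J_k := \{j : n_j\in B\cap M_k\}$, and $J' := \{j : n_j\in B_1\}$, the task reduces to bounding $\|P_{A^c}(x)\|_{\mathbb{X}} = \sup_k \|(P_{A^c}(x))_k\|_{\mathbb{X}_k}$ block by block. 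If $A = B$, then $P_{A^c}(y)=0$ and $\|P_{A^c}(x)\|_{\mathbb{X}} = \|P_{A^c}(x-y)\|_{\mathbb{X}} \le \|x-y\|_{\mathbb{X}}$ immediately; otherwise some $j_0\in A\setminus B$ witnesses $|(x-y)_{j_0}| = |e_{j_0}^*(x)|\ge t$, so $t\le \|x-y\|_{\mathbb{X}}$.

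Within each block $M_k$ I would estimate the two ingredients of $\|\cdot\|_{\mathbb{X}_k}$ separately. The $\ell_\infty$-part $\max_{j\in A^c\cap M_k}|e_j^*(x)|$ is bounded by $t\le \|x-y\|_{\mathbb{X}}$. For the $\ell_1$-average part I split $A^c\cap M_k$ into $A^c\cap M_k\cap B^c$ and $B_1\cap M_k$: on the first piece $|e_j^*(x)| = |(x-y)_j|$, while on the second $|e_j^*(x)|\le |(x-y)_j|+|\alpha f(j')|$ with $j'$ the position of $j$ in $B$. The first piece contributes at most $\|(x-y)_k\|_{\mathbb{X}_k}\le \|x-y\|_{\mathbb{X}}$, while the second equals $\frac{\log g(N_{k-1})}{N_k}\sum_{j'\in J_k\cap J'}|\alpha f(j')|$. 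Applying the sparseness inequality $\sum_{j'\in J_k\cap J'}|f(j')|\le \frac{|J_k\cap J'|}{g(|J_k\cap J'|)}\max_{j'\in J_k\cap J'}|f(j')|$ together with the recursive design of $(N_k)$---either $g(N_k)>3\log g(N_{k-1})$ (via $g(n)/n$ decreasing) or $N_k\ge (3/c)\log g(N_{k-1})$ (via $\inf g(n)/n > c$)---shrinks the prefactor to at most $1/3$, so this second piece is at most $\tfrac{1}{3}M$, where $M:= \max_{j'\in J'}|\alpha f(j')|$.

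It remains to show $M\le 2\|x-y\|_{\mathbb{X}}$. Let $j^*\in J'$ attain $M$ and set $n^*:= n_{j^*}\in B_1\subset A^c$, so $|e_{n^*}^*(x)|\le t$. If $M\le 2|e_{n^*}^*(x)|$, then $M\le 2t\le 2\|x-y\|_{\mathbb{X}}$; otherwise $|e_{n^*}^*(x-y)|\ge M-|e_{n^*}^*(x)|>M/2$, giving $M<2\|x-y\|_{\mathbb{X}}$. Assembling the pieces, each block norm is at most $\max\bigl\{\|x-y\|_{\mathbb{X}},\,\|x-y\|_{\mathbb{X}}+\tfrac{2}{3}\|x-y\|_{\mathbb{X}}\bigr\}=\tfrac{5}{3}\|x-y\|_{\mathbb{X}}\le 2\|x-y\|_{\mathbb{X}}$, which closes the argument.

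The main obstacle is the quantitative matching between the block norm and the sparseness of $f$: the raw sparseness bound leaves a factor $|J_k\cap J'|/g(|J_k\cap J'|)$ that must be uniformly controlled across all $k$, and the recursive growth prescription on $(N_k)$ is calibrated precisely to absorb it, handled via the two mutually exclusive regimes ($g(n)/n$ decreasing versus $\inf g(n)/n>c$). A secondary subtlety is the dichotomy controlling $M$: the greedy threshold $t$ governs one branch while direct cancellation in $x-y$ controls the other, and the split point must be chosen so that the two branches yield balanced estimates.
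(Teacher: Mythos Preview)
Your argument is correct and rests on the same ingredients as the paper's proof (block-by-block analysis of the $\ell_\infty$ and weighted-$\ell_1$ parts of $\|\cdot\|_{\mathbb{X}_k}$, the sparseness inequality for $f$, and the calibrated growth of $(N_k)$), but the execution differs. The paper isolates the single worst block $r$, splits into two cases according to whether $\|y_r\|_{\mathbb{X}_r}\le 2\|y_r\|_\infty$, and in the $\ell_1$-dominant case proceeds by \emph{contradiction}: it assumes $\|y_r\|_{\mathbb{X}_r}>2\|(x-\alpha 1_{f,A})_r\|_{\mathbb{X}_r}$ and extracts incompatible upper and lower bounds on $|\alpha|\|P_{\Lambda_r^c}Q_r 1_{f,A}\|$ via sparseness. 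You instead estimate the $\ell_1$-average \emph{directly}, splitting $A^c\cap M_k$ into the $B^c$ part (comparable to $(x-y)$) and the $B_1$ part (controlled by sparseness), and you separately bound $M=\max_{j'\in J'}|\alpha f(j')|$ by the dichotomy on $|e_{n^*}^*(x)|$. This direct route is arguably cleaner and even gives $5/3$ in place of $2$.

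One small bookkeeping slip: when you write ``the second equals $\frac{\log g(N_{k-1})}{N_k}\sum_{j'\in J_k\cap J'}|\alpha f(j')|$'' you have dropped the $|(x-y)_j|$ contributions over $B_1\cap M_k$ that came from your bound $|e_j^*(x)|\le |(x-y)_j|+|\alpha f(j')|$. The fix is to absorb \emph{all} the $|(x-y)_j|$ terms (over both pieces of $A^c\cap M_k$) into the first estimate $\frac{\log g(N_{k-1})}{N_k}\sum_{j\in A^c\cap M_k}|(x-y)_j|\le \|(x-y)_k\|_{\mathbb{X}_k}$, after which the remaining sum really is $\frac{\log g(N_{k-1})}{N_k}\sum_{j'\in J_k\cap J'}|\alpha f(j')|$ and your $\tfrac13 M$ bound goes through unchanged.
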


\begin{proof}
Fix $x\in\mathbb{X}$, $m\in \mathbb{N}$, $A\subset\mathbb{N}$ with $|A| = m$, and $\alpha \in \mathbb{R}$. We shall show that 
$$\|x - \mathcal{G}_m(x)\|_{\mathbb{X}} \ \le\ 2\|x- \alpha 1_{f, A}\|_{\mathbb{X}}.$$
Let $y: = x-\mathcal{G}_m(x)$ and write $y = (y_k)_{k = 1}^\infty$, where $y_k\in \mathbb{X}_k$.
Then $$\|y\|_{\mathbb{X}}\ =\ \max_{k}\|y_k\|_{\mathbb{X}_k} \ =\ \|y_{r}\|_{\mathbb{X}_{r}}$$ for some $r$. If $\|y_r\|_{\mathbb{X}_r} = 0$, there is nothing to prove. Assume that $\|y_k\|_{\mathbb{X}_r} > 0$. We consider two cases.

Case 1: $\|y_r\|_{\mathbb{X}_r}\le 2\max_{i}|y_r(i)| = 2|y_r(j)|$ for some $1\le j\le N_r$. Since $y_r(j)$ is a coordinate we have after substracting the greedy sum of order $m$ from $x$, we know that $x$ has at least $m+1$ coordinates of magnitude at least $|y_r(j)|$. Hence, $\|x- \alpha 1_{f, A}\|_{\mathbb{X}}\ \ge\ |y_r(j)|$, since $|A| = m$. Therefore, 
$$\|y\|_{\mathbb{X}} \ =\ \|y_r\|_{\mathbb{X}_r}\ \le \ 2\|x-\alpha 1_{f,A}\|_{\mathbb{X}}.$$

Case 2: $\|y_r\|_{\mathbb{X}_r} > 2\max_{i}|y_r(i)|$. By the definition of $\|\cdot\|_{\mathbb{X}_r}$, we have
\begin{equation}\label{e111302}\|y_r\|_{\mathbb{X}_r}\ =\ \frac{\log g(N_{r-1})}{N_r}\|y_r\|_{\ell_1}.\end{equation}
If we can show that 
\begin{equation}\label{e111301}\|y_r\|_{\mathbb{X}_r}\ \le \ 2\|x_r - \alpha Q_r 1_{f, A}\|_{\mathbb{X}_r},\end{equation}
then we are done. Indeed, assume that we have \eqref{e111301}. Then 
$$\|y\|_{\mathbb{X}}\ =\ \|y_r\|_{\mathbb{X}_r}\ \le\ 2\|x_r - \alpha Q_r 1_{f, A}\|_{\mathbb{X}_r}\ \le\ 2\|Q_r\|\|x-\alpha 1_{f,A}\|_{\mathbb{X}} \ =\ 2\|x-\alpha 1_{f,A}\|_{\mathbb{X}}.$$
Assume, for a contradiction, that \eqref{e111301} does not hold. By the definition of $\|\cdot\|_{\mathbb{X}_r}$, we have
\begin{align}\label{e111303}
    \|y_r\|_{\mathbb{X}_r}&\ > \ 2\|x_r - \alpha Q_r 1_{f, A}\|_{\mathbb{X}_r}\nonumber\\
    &\ =\ 2\max\left\{\|x_r - \alpha Q_r1_{f, A}\|_{\infty}, \frac{\log g(N_{r-1})}{N_r}\|x_r - \alpha Q_r1_{f, A}\|_{\ell_1}\right\}.
\end{align}
We estimate $\frac{\log g(N_{r-1})}{N_r}\|x_r - \alpha Q_r1_{f, A}\|_{\ell_1}$. Write $\mathcal{G}_m(x) = P_{\Lambda}(x)$ and $\Lambda = \cup_{k=1}^\infty \Lambda_k$, where $\Lambda_k = \Lambda\cap M_k$. It follows that $y_r = P_{\Lambda^c_r}x_r$. Hence,
\begin{align}\label{e111304}\frac{\log g(N_{r-1})}{N_r}\|x_r - \alpha Q_r1_{f, A}\|_{\ell_1} &\ \ge\ \frac{\log g(N_{r-1})}{N_r}\|y_r - \alpha Q_rP_{\Lambda^c_r}1_{f,A}\|_{\ell_1}\nonumber\\
&\ \ge\ \frac{\log g(N_{r-1})}{N_r}\|y_r\|_{\ell_1} - \frac{\log g(N_{r-1})}{N_r}|\alpha|\|Q_rP_{\Lambda^c_r}1_{f,A}\|_{\ell_1}\nonumber\\
& \ =\  \|y_r\|_{\mathbb{X}_r} - \frac{\log g(N_{r-1})}{N_r}|\alpha|\|Q_rP_{\Lambda^c_r}1_{f,A}\|_{\ell_1}.
\end{align}
The last equality is due to \eqref{e111302}.
From \eqref{e111303} and \eqref{e111304}, we obtain
\begin{equation}\label{e111307}
    \|Q_rP_{\Lambda^c_r}1_{f,A}\|_{\ell_1}|\alpha| \ \ge\ \frac{N_r}{2\log g(N_{r-1})}\|y_r\|_{\mathbb{X}_r}.
\end{equation}
On the other hand, \eqref{e111303} gives
\begin{align*}
\|y_r\|_{\mathbb{X}_r}\ >\ 2\|x_r - \alpha Q_r1_{f, A}\|_{\infty} &\ \ge\ 2\|\alpha P_{\Lambda_r^c}Q_r1_{f, A} - y_r\|_{\infty}\\
&\ \ge\ 2|\alpha|\|P_{\Lambda_r^c}Q_r1_{f, A}\|_{\infty} - 2\|y_r\|_{\infty}.
\end{align*}
Hence, 
\begin{equation}\label{e111306}
    2|\alpha|\|P_{\Lambda_r^c}Q_r1_{f, A}\|_{\infty}  \ <\ \|y_r\|_{\mathbb{X}_r} + 2\|y_r\|_{\infty} \ \le\ 3\|y_r\|_{\mathbb{X}_r}.
\end{equation}
Since $f$ is sparse with sparseness $g$, 
\begin{equation}\label{e111305}\|P_{\Lambda_r^c}Q_r1_{f, A}\|_{\infty} \ \ge \ \begin{cases}\frac{g(N_r)}{N_r}\|P_{\Lambda_r^c}Q_r1_{f, A}\|_{\ell_1}&\mbox{ if }g(n)/n\mbox{ is decreasing},\\ c\|P_{\Lambda_r^c}Q_r1_{f, A}\|_{\ell_1}&\mbox{ if }\inf g(n)/n\ge c>0.\end{cases}\end{equation}

From \eqref{e111307}, \eqref{e111306}, and \eqref{e111305}, we obtain 
\begin{align*} 3\|y_r\|_{\mathbb{X}_r} &\ >\ 2|\alpha| \|P_{\Lambda_r^c}Q_r1_{f, A}\|_\infty\\
&\ \ge\ \begin{cases}2|\alpha|\frac{g(N_r)}{N_r}\|P_{\Lambda_r^c}Q_r1_{f, A}\|_{\ell_1}\ \ge\ \frac{g(N_r)}{\log g(N_{r-1})}\|y_r\|_{\mathbb{X}_r}&\mbox{ if }g(n)/n\mbox{ is decreasing},\\
2|\alpha|c\|P_{\Lambda_r^c}Q_r1_{f, A}\|_{\ell_1}\ \ge\ \frac{cN_r}{\log g(N_{r-1})}\|y_r\|_{\mathbb{X}_r}&\mbox{ if }\inf g(n)/n\ge c > 0.
\end{cases}
\end{align*}
which gives either
$$\frac{g(N_r)}{\log g(N_{r-1})} \ <\ 3 \mbox{ or } \frac{N_r}{\log g(N_{r-1})} \ <\ \frac{3}{c}.$$
both of which contradict our construction of $(N_k)_{k\ge 1}$.
\end{proof}

\section{When democracy and $f$-democracy are not equivalent}\label{nonequivdemo}

The goal of this section is to complement Proposition \ref{equivfdemo} by giving examples that show non-equivalence of democracy and $f$-democracy when $\mathcal{B}$ is conditional. For quasi-greedy bases, democracy and $f$-democracy are still equivalent (see Proposition \ref{qgequiv} below.) Hence, examples of bases that demonstrate the non-equivalence are, in fact, not quasi-greedy.

\begin{prop}\label{qgequiv}
Let $\mathcal{B}$ be a quasi-greedy basis of a Banach space $\mathbb{X}$ with quasi-greedy constant $\mathbf C_q$. Suppose that $f: \mathbb{N}\rightarrow\mathbb{R}$ is regular; that is, $0 < c_1\le \inf |f(n)| \le \sup |f(n)|\le c_2 < \infty$ for some constants $c_1$ and $c_2$. 
\begin{enumerate}
    \item [(i)] If $\mathcal{B}$ is $C_{d, f}$-$f$-democratic, then $\mathcal{B}$ is $\left(8\frac{c_2}{c_1}C_{d,f}\mathbf C_q^3\right)$-democratic.
    \item [(ii)] If $\mathcal{B}$ is $C_d$-democratic, then $\mathcal{B}$ is $\left(8\frac{c_2}{c_1}C_{d}\mathbf C_q^3\right)$-$f$-democratic. 
\end{enumerate}
\end{prop}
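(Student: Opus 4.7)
\smallskip

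The plan is to deduce both directions from the two–sided comparison
$$\frac{c_1}{4\mathbf C_q^{2}}\,\|1_A\|\ \le\ \|1_{f,A}\|\ \le\ 2\mathbf C_q c_2\,\|1_A\|,$$
valid for every finite set $A\subset\mathbb{N}$, and then just chain the inequalities. Thus the core of the proof is this comparison, after which (i) and (ii) follow by a one–line substitution.

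To establish the upper bound, I would write $A=\{n_1,\ldots,n_m\}$, take the coefficients $a_{n_j}=f(j)$, and apply Lemma~\ref{gu}(i): since $\max_{1\le j\le m}|f(j)|\le c_2$, we get $\|1_{f,A}\|\le 2\mathbf C_q c_2\|1_A\|$. For the lower bound I would combine parts (ii) and (iii) of Lemma~\ref{gu}. First, Lemma~\ref{gu}(ii) with the same coefficients yields $c_1\|1_{\varepsilon A}\|\le\min_{j}|f(j)|\,\|1_{\varepsilon A}\|\le 2\mathbf C_q\|1_{f,A}\|$, where $\varepsilon_{n_j}=\mathrm{sign}(f(j))$. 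Then Lemma~\ref{gu}(iii) compares the unsigned and signed characteristic sums: $\|1_A\|\le 2\mathbf C_q\|1_{\varepsilon A}\|$. Combining these gives $\|1_A\|\le \frac{4\mathbf C_q^{2}}{c_1}\|1_{f,A}\|$, which is the lower half of the comparison above.

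For part (i), if $\mathcal B$ is $C_{d,f}$-$f$-democratic and $A,B\subset\mathbb{N}$ satisfy $|A|=|B|$, then
$$\|1_A\|\ \le\ \frac{4\mathbf C_q^{2}}{c_1}\|1_{f,A}\|\ \le\ \frac{4\mathbf C_q^{2}}{c_1}\,C_{d,f}\,\|1_{f,B}\|\ \le\ \frac{4\mathbf C_q^{2}}{c_1}\cdot C_{d,f}\cdot 2\mathbf C_q c_2\,\|1_B\|\ =\ 8\,\frac{c_2}{c_1}\,C_{d,f}\,\mathbf C_q^{3}\,\|1_B\|.$$
Part (ii) proceeds symmetrically, applying the upper bound to $A$, democracy of $\mathcal B$, and the lower bound to $B$; the same constants appear.

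There is no real obstacle here beyond keeping the roles of $1_A$, $1_{f,A}$, $1_{\varepsilon A}$ straight and remembering that $f$ may change sign (which is precisely what forces us to pass through Lemma~\ref{gu}(iii) rather than directly bounding $\|1_A\|$ by $\|1_{\varepsilon A}\|$). In fact, the proof is a strict quasi-greedy analogue of Proposition~\ref{equivfdemo}: each use of the unconditional constant $\mathbf K_u$ in that earlier proof is replaced by a use of Lemma~\ref{gu}, which is the substitute for unconditionality available in the quasi-greedy setting.
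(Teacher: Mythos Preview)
Your proof is correct and follows essentially the same approach as the paper: establish the two-sided comparison $\frac{c_1}{4\mathbf C_q^{2}}\|1_A\|\le\|1_{f,A}\|\le 2\mathbf C_q c_2\|1_A\|$ via Lemma~\ref{gu} and then chain with the appropriate democracy hypothesis. If anything, you are more explicit than the paper about which parts of Lemma~\ref{gu} are invoked (in particular the passage through $1_{\varepsilon A}$ to handle the signs of $f$), whereas the paper simply cites the lemma and records the resulting inequalities.
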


\begin{proof}
Assume that $\mathcal{B}$ is $C_{d,f}$-$f$-democratic. Let $A, B\subset\mathbb{N}$ with $|A| = |B| = m$. Then
\begin{equation}\label{eee1}\|1_{f, A}\|\ \le\ C_{d,f}\|1_{f, B}\|.\end{equation}
By Lemma \ref{gu}, we have
\begin{equation}\label{eee22}\frac{c_1}{4\mathbf C_q^2}\|1_A\|\ \le \ \|1_{f, A}\|\mbox{ and } \|1_{f, B}\|\ \le\ 2\mathbf C_qc_2\|1_B\|.\end{equation}
From \eqref{eee1} and \eqref{eee22}, we obtain 
$$\|1_A\|\ \le\ 8\frac{c_2}{c_1}C_{d,f}\mathbf C_q^3\|1_B\|.$$
Hence, $\mathcal{B}$ is $\left(8\frac{c_2}{c_1}C_{d,f}\mathbf C_q^3\right)$-democratic. 

Now, assume that $\mathcal{B}$ is $C_d$-democratic. Let $A, B\subset \mathbb{N}$ with $|A| = |B| = m$. Then
\begin{equation}\label{eee3}\|1_A\|\ \le\ C_d\|1_{B}\|.\end{equation}
By Lemma \ref{gu}, we have 
\begin{equation}\label{eee4}\|1_{f, A}\|\ \le\ 2\mathbf C_qc_2\|1_A\|\mbox{ and }c_1\|1_B\|\ \le\ 4\mathbf C_q^2\|1_{f, B}\|.\end{equation}
From \eqref{eee3} and \eqref{eee4}, we obtain
$$\|1_{f,A}\|\ \le\ 8\frac{c_2}{c_1}C_d\mathbf C_q^3\|1_{f,B}\|.$$
Hence, $\mathcal{B}$ is $\left(8\frac{c_2}{c_1}C_{d}\mathbf C_q^3\right)$-$f$-democratic.
\end{proof}

\subsection{Example of a democratic basis that is not $f$-democratic}\label{exa1}
Let $(N_k)_{k=1}^{\infty}$ be a strictly increasing sequence of positive integers. Define $\mathbb{X}_k$ to be an $N_k$-dimensional Banach space, where 
$$\|(x_1, \ldots, x_{N_k})\|_{\mathbb{X}_{k}}\ =\ \max\left\{\max_{i}|x_i|, \max_{j}\left|\sum_{i=j}^{N_k}x_i\right|\right\}.$$
Let $\mathbb{X}$ be $\ell_1(\oplus \mathbb{X}_k)$, the direct sum of $\mathbb{X}_k$ in the sense of $\ell_1$. 
We consider the standard unit sequence, denoted by $\mathcal{B} = (e_n)_{n=1}^\infty$, where $e_n = (\delta_{k,n})_{k=1}^\infty$. Each vector $x$ in 
$\mathbb{X}$ is a sequence $(x_k)_{k=1}^\infty$, where $x_k\in \mathbb{X}_k$. We can also write $(x_k)_{k=1}^\infty$ as
$(x_k(i))_{\substack{k\in \mathbb{N}\\ 1\le i\le N_k}}$, where $x_k = (x_k(i))_{i=1}^{N_k}$. By Proposition \ref{rr1}, $\mathcal{B}$ is a Schauder basis.

\begin{claim}
The standard unit basis $\mathcal{B}$ is democratic.
\end{claim}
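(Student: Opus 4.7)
The plan is to show $1$-democracy by a direct computation: I will verify that $\|1_A\|_\mathbb{X} = |A|$ for every finite $A\subset\mathbb{N}$, which trivially yields democracy with constant $1$.

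First I would decompose $A$ along the blocks $M_k$ corresponding to the summands $\mathbb{X}_k$. Writing $A_k = A\cap M_k$ and using that $\mathbb{X} = \ell_1(\oplus\mathbb{X}_k)$, the norm of $1_A$ breaks up as
\[
\|1_A\|_\mathbb{X}\ =\ \sum_{k=1}^\infty \|1_{A_k}\|_{\mathbb{X}_k}.
\]
Next I would evaluate each $\|1_{A_k}\|_{\mathbb{X}_k}$. The vector $1_{A_k}$ has coordinates in $\{0,1\}$, so the first term in the definition of $\|\cdot\|_{\mathbb{X}_k}$ contributes $1$ when $A_k\neq\emptyset$ and $0$ otherwise. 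For the second term, $\sum_{i=j}^{N_k}\chi_{A_k}(i)$ counts the number of elements of $A_k$ that are at least $j$; this is a nonnegative quantity maximized at $j=1$, where it equals $|A_k|$. Hence
\[
\|1_{A_k}\|_{\mathbb{X}_k}\ =\ \max\{\mathbf 1_{[A_k\neq\emptyset]},\,|A_k|\}\ =\ |A_k|,
\]
with the convention $\|1_\emptyset\|=0$.

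Summing over $k$ gives $\|1_A\|_\mathbb{X} = \sum_k |A_k| = |A|$. Consequently, whenever $|A|=|B|$ one has $\|1_A\|_\mathbb{X} = \|1_B\|_\mathbb{X}$, so $\mathcal{B}$ is $1$-democratic. There is no real obstacle here; the computation is immediate once one recognizes that the tail-sum seminorm $\max_j|\sum_{i=j}^{N_k}x_i|$ is nonnegative on the nonnegative vector $1_{A_k}$ and is attained at $j=1$. The substantive content of this example is reserved for the subsequent claim that this democratic basis fails to be $f$-democratic, where the tail-sum term will interact nontrivially with the scaling encoded by $f$.
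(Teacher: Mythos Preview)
Your proposal is correct and follows exactly the paper's approach: the paper's proof is the single line ``since $\|1_A\|_{\mathbb{X}} = |A|$ for every finite $A$, $\mathcal{B}$ is democratic,'' and you have simply supplied the straightforward computation that justifies this equality via the $\ell_1$-sum decomposition and the tail-sum observation.
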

\begin{proof}
Since for any finite set $A\subset \mathbb{N}$, we have $\|1_A\|_{\mathbb{X}} = |A|$, we know that $\mathcal{B}$ is democratic. 
\end{proof}

\begin{claim}
The standard unit basis $\mathcal{B}$ is not $f$-democratic, where $f(n) = (-1)^n$.
\end{claim}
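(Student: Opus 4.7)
The plan is to exhibit, for each $k$, a pair of sets $A_k,B_k\subset\mathbb{N}$ with $|A_k|=|B_k|$ such that $\|1_{f,A_k}\|_\mathbb{X}/\|1_{f,B_k}\|_\mathbb{X}$ grows without bound, which directly contradicts $f$-democracy. The guiding intuition is that with $f(j)=(-1)^j$, the sign pattern in $1_{f,A}=\sum_j(-1)^je_{n_j}$ is assigned by the position $j$ of an index within the ordered listing of $A$, not by the actual coordinate $n_j$. Thus the norm of $1_{f,A}$ is very sensitive to whether consecutive chosen indices share a block $\mathbb{X}_k$ (in which case the alternating signs cancel telescopically against the tail-sum functional in the definition of $\|\cdot\|_{\mathbb{X}_k}$) or lie in distinct blocks (in which case no cancellation is possible, because $\mathbb{X}$ is the $\ell_1$-direct sum of the $\mathbb{X}_k$'s).

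Concretely, I would set $B_k:=M_k$, the entire $k$-th block, of size $N_k$, and let $A_k$ consist of a single chosen element from each of the first $N_k$ blocks $M_1,M_2,\ldots,M_{N_k}$, so that $|A_k|=N_k=|B_k|$. For $B_k$, enumerating $M_k$ in its natural increasing order gives
$$1_{f,B_k}\ =\ \sum_{j=1}^{N_k}(-1)^j e_{s+j}\qquad\bigl(s=\textstyle\sum_{i<k}N_i\bigr),$$
which lives entirely inside $\mathbb{X}_k$; every tail sum $\sum_{i=j}^{N_k}(-1)^i$ lies in $\{-1,0,1\}$, and each coordinate has modulus one, so $\|1_{f,B_k}\|_\mathbb{X}=\|1_{f,B_k}\|_{\mathbb{X}_k}=1$. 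For $A_k$, the ordering $n_1<n_2<\cdots<n_{N_k}$ matches the block order (because every element of $M_j$ precedes every element of $M_{j+1}$), hence $1_{f,A_k}$ has exactly one nonzero coordinate, of modulus one, in each of the blocks $\mathbb{X}_1,\ldots,\mathbb{X}_{N_k}$. Since $\mathbb{X}=\ell_1(\oplus\mathbb{X}_k)$, the norm is additive across blocks, giving $\|1_{f,A_k}\|_\mathbb{X}=N_k$.

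Therefore $\|1_{f,A_k}\|_\mathbb{X}/\|1_{f,B_k}\|_\mathbb{X}=N_k\to\infty$, contradicting $f$-democracy. There is no real obstacle here once the sign/position dichotomy is observed; the only point worth verifying carefully is that the tail-sum functional in the definition of $\|\cdot\|_{\mathbb{X}_k}$ really does flatten the alternating pattern on $B_k$, but this is immediate from the fact that $|\sum_{i=j}^{N_k}(-1)^i|\le 1$ for every $j$.
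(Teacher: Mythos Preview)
Your proof is correct and follows essentially the same approach as the paper: both pick $A$ as one element from each of several distinct blocks (forcing the $\ell_1$-sum to add up) and $B$ inside a single block (where the alternating signs collapse the tail sums to $\{-1,0,1\}$). The only cosmetic difference is that you tie the common cardinality to $N_k$ and take $B_k=M_k$ exactly, whereas the paper lets $m$ be arbitrary and takes $B$ to be any $m$-subset of a sufficiently large block.
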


\begin{proof}
Partition $\mathbb{N}$ into blocks: $M_1 = [1, N_1], M_2 = [N_1+1, N_2]$, and $M_k = [\sum_{i=1}^{k-1}N_i +1, N_k]$ for $k\ge 3$. Let $m\ge 3$ and choose
$$A \ =\ \left\{1, N_1+1, N_1 + N_2+1, \ldots, \sum_{n=1}^{m-1}N_{n}+1\right\},$$
and $B\subset M_k$ for some sufficiently large $k$. We have $\|1_A\|_{\mathbb{X}} = m$ and $\|1_B\|_{\mathbb{X}} = 1$. Therefore, $\mathcal{B}$ is not $f$-democratic.
\end{proof}

\subsection{Example of an $f$-democratic basis that is not democratic}
We define a norm on $c_{00}$
$$\|(x_1, x_2, x_3, \ldots)\| \ =\ \sup\left\{ \left|\sum_{i=1}^N (-1)^ix_{n_i}\right|\,:\, N\in \mathbb{N}, (n_i)_{i= 1}^N\subset \mathbb{N}, n_1 < n_2 < \cdots\right\}.$$
Let $\mathbb{X}$ be the completion of $c_{00}$ with respect to this norm.
Consider the standard unit sequence, denoted by $\mathcal{B} = (e_n)_{n=1}^\infty$, where $e_n = (\delta_{k,n})_{k=1}^\infty$.

\begin{claim}\label{sp}
The sequence $\mathcal{B}$ is a Schauder basis. 
\end{claim}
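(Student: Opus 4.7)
The plan is to apply Proposition \ref{schauDef}. First, I would define the biorthogonal functionals $e_n^*$ on $c_{00}$ by $e_n^*(x) = x_n$ for $x = (x_k)_{k=1}^\infty \in c_{00}$. Taking $N = 1$ and $n_1 = n$ in the supremum defining the norm shows $|x_n| \le \|x\|$, so each $e_n^*$ is bounded on $c_{00}$ with norm at most $1$, and therefore extends uniquely to a bounded functional on the completion $\mathbb{X}$. The biorthogonality $e_k^*(e_j) = \delta_{k,j}$ is immediate, so condition (i) of Proposition \ref{schauDef} holds. It remains to verify condition (ii): that $x = \sum_{n=1}^\infty e_n^*(x)\, e_n$ for every $x \in \mathbb{X}$.

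The strategy is the standard one: establish that the partial sum operators $S_N(x) := \sum_{n=1}^N e_n^*(x) e_n$ are uniformly bounded on $c_{00}$, and then invoke density of $c_{00}$ in $\mathbb{X}$. The heart of the proof is the claim that $\|S_N(x)\| \le \|x\|$ for every $x \in c_{00}$ and every $N \in \mathbb{N}$. To see this, fix $x \in c_{00}$ and any finite increasing sequence $n_1 < n_2 < \cdots < n_M$ of positive integers. The coordinates of $S_N(x)$ agree with those of $x$ for indices $\le N$ and vanish otherwise, so if we set $J := \{i : n_i \le N\}$, then
$$\sum_{i=1}^M (-1)^i (S_N(x))_{n_i} \;=\; \sum_{i \in J} (-1)^i x_{n_i}.$$
The key observation is that because $(n_i)$ is increasing, $J$ is an initial segment $\{1,2,\ldots,|J|\}$ of $\{1,\ldots,M\}$; thus the right-hand side equals $\sum_{i=1}^{|J|} (-1)^i x_{n_i}$, which is one of the expressions appearing in the supremum defining $\|x\|$. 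Taking the supremum over all such sequences yields $\|S_N(x)\| \le \|x\|$, as required.

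With uniform boundedness in hand, each $S_N$ extends to a bounded operator on $\mathbb{X}$ (still of norm at most $1$), and by continuity the extension satisfies $S_N(y) = \sum_{n=1}^N e_n^*(y) e_n$ for every $y \in \mathbb{X}$. For $z \in c_{00}$ we have $S_N(z) = z$ once $N$ exceeds the support of $z$. Given $x \in \mathbb{X}$ and $\varepsilon > 0$, choose $z \in c_{00}$ with $\|x - z\| < \varepsilon/3$ and take $N_0$ with $S_N(z) = z$ for all $N \ge N_0$; then for such $N$,
$$\|x - S_N(x)\| \;\le\; \|x - z\| + \|z - S_N(z)\| + \|S_N(z - x)\| \;\le\; 2\|x - z\| \;<\; 2\varepsilon/3.$$
Hence $S_N(x) \to x$ for every $x \in \mathbb{X}$, so condition (ii) holds and $\mathcal{B}$ is a Schauder basis.

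The only nontrivial point is the initial-segment observation that makes each $S_N$ a contraction; without it, truncating at $N$ could disrupt the alternating sign pattern and inflate the norm. Everything else (extension by continuity and the density argument) is routine.
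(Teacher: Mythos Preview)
Your proof is correct and follows essentially the same route as the paper: define the coordinate functionals, verify their boundedness, and apply Proposition~\ref{schauDef} by combining a contraction property with the density of $c_{00}$. The one minor variation is that you show the partial sum operators $S_N$ are contractions (via the initial-segment observation), whereas the paper works with the tails $y - S_{j-1}(y)$ and implicitly uses that the tail projection is a contraction (via the analogous final-segment observation, after re-indexing the alternating signs); both observations are two sides of the same coin, and your formulation is arguably cleaner.
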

\begin{proof}
For each $n\in\mathbb{N}$, define $e_n^*: \mathbb{X}\rightarrow \mathbb{R}$ as $e_n^*((x_1, x_2, x_3, \ldots)) = x_n$. If $y = (y_1, y_2, y_3, \ldots)\in \mathbb{X}$, we have
$$\|y\|\ \ge\ \|y\|_{\infty} \ \ge\ |y_n|\ =\ |e_n^*(y)|, \forall n\in\mathbb{N}.$$
Hence, $e_n^*\in \mathbb{X}^*$. We shall show that for each $y\in \mathbb{X}$, $y = \sum_{n=1}^\infty e_n^*(y)e_n$. Let $\varepsilon > 0$. It suffices to show that there exists an $M\in \mathbb{N}$ such that 
$$\left\|\sum_{n=j}^\infty e_n^*(y)e_n\right\| \ \le\ \varepsilon, \forall j\ge M.$$
Suppose otherwise; that is, $\|\sum_{n=j}^\infty e_n^*(y)e_n\| > \varepsilon$ holds for infinitely many $j$. Since $y\in \mathbb{X}$, there exists a sequence $y_m\in c_{00}$ such that $\|y_m - y\| < \varepsilon$. Let $M = \max\mbox{supp}(y_m)$ and choose $j_0 > M$ such that $\left\|\sum_{n=j_0}^\infty e_n^*(y)e_n\right\| > \varepsilon$. By the definition of $\|\cdot\|$, we have the following contradiction
$$ \varepsilon \ >\  \|y - y_m\|\ \ge \ \left\|\sum_{n=j_0}^\infty e_n^*(y)e_n\right\| \ > \ \varepsilon.$$
This completes our proof. 
\end{proof}

\begin{claim}
The standard unit basis $\mathcal{B}$ is not democratic. 
\end{claim}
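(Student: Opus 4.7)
The plan is to exhibit, for each $m \in \mathbb{N}$, two sets $A_m, B_m \subset \mathbb{N}$ with $|A_m| = |B_m| = m$ such that the ratio $\|1_{B_m}\| / \|1_{A_m}\|$ grows without bound. The natural candidates are the consecutive block $A_m = \{1, 2, \ldots, m\}$ and the ``sparse'' set $B_m = \{1, 3, 5, \ldots, 2m-1\}$. The guiding intuition is that, when computing $\|1_A\|$, one is free to pad the subsequence $(n_i)$ with indices outside $A$; each such padding shifts the parities of the subsequent signs $(-1)^i$. For consecutive $A$, no padding is available before all of $A$ is exhausted, so the signs must alternate; for sparse $A$, one can insert dummy indices so that every element of $A$ receives the same sign.

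First I would upper-bound $\|1_{A_m}\|$. For any strictly increasing $(n_i)_{i=1}^N \subset \mathbb{N}$, the defining sum equals $\sum_{i\,:\, n_i \in A_m} (-1)^i$. Since $A_m$ is the initial segment $\{1,\dots,m\}$ and $(n_i)$ is increasing, once some $n_{i_0} > m$ we have $n_i > m$ for all $i \ge i_0$; hence $\{i : n_i \in A_m\}$ is a prefix $\{1,\ldots,k\}$ of $\mathbb{N}$ and $\sum_{i=1}^k (-1)^i \in \{0,-1\}$. Taking the supremum gives $\|1_{A_m}\| \le 1$ (and $\ge 1$ by choosing $n_1 = 1$), so $\|1_{A_m}\| = 1$.

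Next I would lower-bound $\|1_{B_m}\|$ by exhibiting a single good witness: take $N = 2m-1$ and $(n_i)_{i=1}^{2m-1} = (1, 2, \ldots, 2m-1)$. Then $n_i \in B_m$ exactly when $i$ is odd, so
$$\sum_{i=1}^{2m-1} (-1)^i \, e_{n_i}^*(1_{B_m}) \;=\; \sum_{k=1}^{m} (-1)^{2k-1} \;=\; -m,$$
whence $\|1_{B_m}\| \ge m$. Combining the two estimates, $\|1_{B_m}\|/\|1_{A_m}\| \ge m \to \infty$, so no uniform democracy constant exists.

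There is no real obstacle here; the main conceptual point (and the only thing that needs a clean argument rather than a computation) is the ``prefix'' observation for consecutive $A$, which is precisely what prevents sign cancellation from being avoided in that case while permitting it for $B_m$.
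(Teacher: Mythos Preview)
Your proof is correct and follows essentially the same approach as the paper: the paper also takes $A = \{1,\ldots,m\}$ and $B = \{1,3,\ldots,2m-1\}$ and asserts $\|1_A\|=1$, $\|1_B\|=m$. Your write-up simply supplies the details behind the paper's ``it is easy to check,'' including the prefix observation for $A$ and an explicit witnessing sequence for $B$.
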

\begin{proof}
Let $m\in\mathbb{N}$. Choose $A = \{1, 2, 3, \ldots, m\}$ and $B = \{1, 3, 5, \ldots, 2m-1\}$. It is easy to check that $\|1_A\| = 1$ and $\|1_B\| = m$. Since $m$ is arbitrary, $\mathbb{X}$ is not democratic. 
\end{proof}

\begin{claim}
The standard unit basis $\mathcal{B}$ is $f$-democratic, where $f(n) = (-1)^n$.
\end{claim}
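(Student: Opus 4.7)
The plan is to compute $\|1_{f,A}\|$ exactly for every finite $A \subset \mathbb{N}$ and show that it equals $|A|$. This immediately yields $f$-democracy with constant $1$, since for $|A| = |B|$ we then get $\|1_{f,A}\| = |A| = |B| = \|1_{f,B}\|$.

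Write $A = \{n_1 < n_2 < \cdots < n_m\}$, so $1_{f,A} = \sum_{j=1}^m (-1)^j e_{n_j}$. By the definition of the norm on $\mathbb{X}$, evaluating $\|1_{f,A}\|$ amounts to taking the supremum of $\bigl|\sum_{i=1}^N (-1)^i\,e_{t_i}^*(1_{f,A})\bigr|$ over all finite increasing sequences $t_1 < t_2 < \cdots < t_N$ of positive integers. Since $e_{t_i}^*(1_{f,A})$ vanishes unless $t_i \in A$, in which case $t_i = n_{j}$ for a unique $j$ and the corresponding term equals $(-1)^{i+j}$, the sum reduces to $\sum_{t_i \in A} (-1)^{i + j(i)}$, a signed sum of at most $m$ unit terms.

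For the upper bound, this observation directly gives $\|1_{f,A}\| \le m$, because at most $m$ of the $t_i$'s can lie in $A$ and each contributes $\pm 1$. For the lower bound, I choose the particular test sequence $T = A$, i.e., $t_i = n_i$ for $i=1,\dots,m$. Then $i = j(i)$ for every $i$, so every term equals $(-1)^{2i} = 1$, producing the sum $m$. Hence $\|1_{f,A}\| \ge m$, and combining gives $\|1_{f,A}\| = |A|$.

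There is no real obstacle here: the key observation is that the alternating-sign norm collapses beautifully on the alternating vector $1_{f,A}$, because the parity of the position of $n_j$ inside the test sequence $T = A$ matches the parity carried by $(-1)^j$. Once this is noted, the $f$-democracy constant is simply $1$, which I will state at the end of the short proof.
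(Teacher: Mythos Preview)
Your proof is correct and follows exactly the same approach as the paper: you compute $\|1_{f,A}\|=|A|$ for every finite $A$, by bounding the alternating sum above by $m$ and achieving $m$ via the test sequence $T=A$. The paper's own proof merely asserts ``it is easy to check that $\|1_{f,A}\|=\|1_{f,B}\|=m$'' and concludes $1$-$f$-democracy; you have supplied precisely those details.
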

\begin{proof}
Let $m\in\mathbb{N}$ and $A, B\subset\mathbb{N}$ with $|A| = |B| = m$. It is easy to check that $\|1_A\| = \|1_B\| = m$. Hence, $\mathcal{B}$ is $1$-$f$-democratic. 
\end{proof}

Next, we give examples of unconditional bases that witness either democracy or $f$-democracy but not both. Due to Proposition \ref{qgequiv}, we know that $f$ cannot be regular. 

\subsection{Greedy basis that is not $f$-democratic}
We shall use the Schreier space, denoted by $\mathbb{S}$, as an example. The space was constructed by J. Schreier
\cite{S} as a counterexample to a question of Banach and Saks. It
has the property that the standard unit basis is
weakly null, but there is no subsequence that Ces\`{a}ro sums to 0. We describe the construction of the Schreier space. Let $\mathcal{F} = \{F\subset \mathbb{N}: \min F\ge |F|\}$.
The Schreier space is the completion of $c_{00}$ with respect to the norm
$$\|(x_1, x_2, x_3, \ldots)\|_{\mathbb{S}}\ =\ \sup_{F\in \mathcal{F}}\sum_{i\in F}|x_i|.$$
It is well-known that the standard unit sequence, denoted by $\mathcal{B}$, is an unconditional, Schauder basis of $\mathbb{S}$.

\begin{claim}
The basis $\mathcal{B}$ is $2$-democratic. 
\end{claim}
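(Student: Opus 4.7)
The plan is to show that $\|1_A\|_{\mathbb{S}} \leq m$ for every finite $A$ with $|A|=m$, and that $\|1_B\|_{\mathbb{S}} \geq \lceil m/2 \rceil$ for every such $B$. These two bounds immediately yield
\[\|1_A\|_{\mathbb{S}}\ \le\ m\ \le\ 2\lceil m/2\rceil\ \le\ 2\|1_B\|_{\mathbb{S}},\]
proving $2$-democracy.

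The upper bound is immediate from the definition of the norm: for any admissible $F\in\mathcal{F}$,
\[\sum_{i\in F}|(1_A)_i|\ =\ |F\cap A|\ \le\ |A|\ =\ m,\]
so taking the supremum gives $\|1_A\|_{\mathbb{S}}\le m$.

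For the lower bound, the key idea is to look at a tail of $B$. Write $B=\{b_1<b_2<\cdots<b_m\}$ and set $j := \lceil (m+1)/2\rceil$ and $F := \{b_j,b_{j+1},\ldots,b_m\}$. Because $b_1,\ldots,b_j$ are $j$ distinct positive integers, we have $b_j\ge j$, and by the choice of $j$ we have $j\ge m-j+1=|F|$. Hence $\min F = b_j\ge |F|$, so $F\in\mathcal{F}$ and
\[\|1_B\|_{\mathbb{S}}\ \ge\ \sum_{i\in F}|(1_B)_i|\ =\ m-j+1\ \ge\ \lceil m/2\rceil.\]

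There is essentially no serious obstacle. The only subtlety is recognizing that the witnessing admissible set for the lower bound must be drawn from the \emph{large} elements of $B$ (a tail rather than a head), since the admissibility condition $\min F\ge |F|$ forces us to balance the size of $F$ against how far out we start. Once this is spotted, the rest is a one-line counting argument.
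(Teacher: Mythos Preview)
Your proof is correct and follows essentially the same approach as the paper: both obtain the upper bound $\|1_A\|_{\mathbb{S}}\le m$ trivially and the lower bound by exhibiting the tail $\{b_{\lfloor m/2\rfloor+1},\ldots,b_m\}$ as an admissible set (your index $j=\lceil(m+1)/2\rceil$ equals $\lfloor m/2\rfloor+1$, so the witnessing sets coincide). The only cosmetic difference is that you phrase the lower bound as $\lceil m/2\rceil$ while the paper writes $m-\lfloor m/2\rfloor\ge m/2$.
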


\begin{proof}
Let $A\subset \mathbb{N}$ be a finite set with $|A| = m$. By the definition of $\|\cdot\|_{\mathbb{S}}$, we have $\|1_A\|_{\mathbb{S}} \le m$. On the other hand, write $A = \{n_1, n_2, \ldots, n_m\}$. Consider
$$F\ :=\ \{n_{\lfloor m/2 \rfloor+1}, \ldots, n_m\}\ \in\ \mathcal{F}.$$
Clearly, 
$$\min F \ =\ n_{\lfloor m/2 \rfloor+1} \ \ge\ \lfloor m/2 \rfloor + 1 \ \ge\ m-\lfloor m/2 \rfloor\ =\ |F|.$$
Hence, $\|1_A\|_{\mathbb{S}} \ge m - \lfloor m/2\rfloor \ge m/2$. These bounds on $\|1_A\|_{\mathbb{S}}$ imply $2$-democracy. 
\end{proof}

\begin{claim}
The basis $\mathcal{B}$ is not $f$-democratic, where $f(n) = 1/n$.
\end{claim}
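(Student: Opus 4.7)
The plan is to disprove $f$-democracy by exhibiting two sequences of finite subsets $A_m, B_m \subset \mathbb{N}$ with $|A_m| = |B_m| = m$ and $\|1_{f, A_m}\|_{\mathbb{S}}/\|1_{f, B_m}\|_{\mathbb{S}} \to \infty$. The key observation is that, by definition of $1_{f, A}$, if we enumerate $A = \{n_1 < n_2 < \cdots < n_m\}$, then $1_{f, A} = \sum_{j=1}^{m} (1/j) e_{n_j}$: the coefficient $1/j$ attaches to the $j$-th smallest index of $A$, not to $e_{n_j}$ intrinsically. So the weight profile depends on the \emph{rank} within $A$, and we can exploit the fact that the Schreier admissibility condition $\min F \ge |F|$ penalizes small-index supports.

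For the denominator I would take $B_m := \{1, 2, \ldots, m\}$, in which case the large coefficients $1/j$ sit at the smallest indices. Any admissible $F \in \mathcal{F}$ with $F \subset B_m$ satisfies $\min F \ge |F| =: k$, hence $1/i \le 1/k$ for every $i \in F$, which yields
\[
\sum_{i \in F} 1/i \ \le\ k \cdot (1/k) \ =\ 1.
\]
Therefore $\|1_{f, B_m}\|_{\mathbb{S}} \le 1$ uniformly (in fact it equals $1$, attained by $F=\{1\}$).

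For the numerator I would take $A_m := \{m, m+1, \ldots, 2m-1\}$, so that $1_{f, A_m} = \sum_{j=1}^{m} (1/j)\, e_{m+j-1}$. The crucial feature is that the entire set $A_m$ is itself admissible: $\min A_m = m = |A_m|$, so $A_m \in \mathcal{F}$. Testing the norm on $F = A_m$ gives
\[
\|1_{f, A_m}\|_{\mathbb{S}} \ \ge\ \sum_{j=1}^m \frac{1}{j} \ =\ H_m,
\]
the $m$-th harmonic number. Combining, $\|1_{f, A_m}\|_{\mathbb{S}} / \|1_{f, B_m}\|_{\mathbb{S}} \ge H_m \to \infty$, contradicting $f$-democracy.

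There is no serious obstacle: once one recognizes the rank-vs-index distinction in the definition of $1_{f, A}$, the two sets essentially suggest themselves — $B_m$ is the worst case for the Schreier norm because admissibility excludes the small indices where the large weights live, while $A_m$ is the best case because $A_m$ is the longest admissible set starting at $m$ and all its weights $1/1, 1/2, \ldots, 1/m$ can be summed simultaneously.
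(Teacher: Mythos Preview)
Your proof is correct and follows essentially the same approach as the paper: both use the set $\{1,\ldots,m\}$ for the bounded side and a translate $\{m,\ldots,2m-1\}$ (the paper uses $\{m+1,\ldots,2m\}$) for the unbounded side, exploiting that the latter is itself Schreier-admissible so the full harmonic sum appears. Your bound $\sum_{i\in F} 1/i \le |F|\cdot 1/\min F \le 1$ for the small set is in fact a bit cleaner than the paper's integral estimate, which only yields $O(1)$.
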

\begin{proof}
Let $m\in \mathbb{N}$ be sufficient large.
Let $A = \{1, \ldots, m\}$ and $B = \{m+1, \ldots, 2m\}$. Then 
$$\|1_{f,B}\|_{\mathbb{S}}\ =\ \sum_{n=1}^m \frac{1}{n}\ \ge \ \frac{1}{2}\log m.$$
We bound $\|1_{f, A}\|_{\mathbb{S}}$ from above. Let $F\in \mathcal{F}$. If $\min F > m$, then $\sum_{i\in F}|e_i^*(1_{f, A})| = 0$. If $2\le \min F \le m$, then 
$$\sum_{i\in F}|e_i^*(1_{f, A})|\ \le \ \sum_{n=\min F}^{2\min F-1}\frac{1}{n} \ \le\ \int_{\min F-1}^{2\min F-1}\frac{dx}{x}\ =\ \log\frac{2\min F-1}{\min F-1} \ =\ O(1).$$
If $\min F = 1$, then $F = \{1\}$, which gives $\sum_{i\in F}|e_i^*(1_{f, A})| = 1$. Hence, $\|1_{f, A}\|_{\mathbb{S}} = O(1)$. This completes our proof. 
\end{proof}

\subsection{Unconditional and $f$-democratic basis that is not democratic}
Observe that if $\sum_{n} |f(n)| < c < \infty$ and $f(1)\neq 0$, then any normalized basis is $f$-democratic. Indeed, for any finite set $A\subset \mathbb{N}$, we have 
$$\frac{|f(1)|}{\mathbf K_b}\ =\ \frac{1}{\mathbf K_b}\|f(1) e_{\min A}\|\ \le \|1_{f,A}\|\ \le\ \sum_{n} |f(n)| \ < \ c.$$
Hence, $\|1_{f, A}\|$ is bounded by positive constants independent of $A$, which implies $f$-democracy. 
On the other hand, there are many examples of an unconditional basis that is not democratic (see \cite[Example 10.4.4]{AK}.) 

Suppose that $f$ does not satisfy $\sum_{n} |f(n)| < \infty$, we shall construct an unconditional basis that is $f$-democratic but not democratic. 

Let $f(n) = n^{-1/2}$, $s_n = \sum_{i=1}^n \frac{1}{i}$, and $\Pi$ be the set of all permutations of $\mathbb{N}$. Let $d_1, d_2>0$ be such that $d_1\log n < s_n < d_2\log n$ whenever $n\ge 2$. 
Let $\mathbb{X}$ be the completion of $c_{00}$ with respect to the following norm
$$\|(x_1, x_2, x_3, \ldots)\|\ =\ \max\left\{\sup_{n}\frac{1}{\sqrt{s_n}}\sup_{\pi\in \Pi} \sum_{i=1}^n \frac{|x_{\pi(i)}|}{i^{1/2}}, \left(\sum_{i}|x_{2i}|^2\right)^{1/2}\right\}.$$
For convenience, set 
\begin{align*}
    \|(x_1, x_2, x_3, \ldots)\|_1 &\ =\ \sup_{n}\frac{1}{\sqrt{s_n}}\sup_{\pi\in \Pi} \sum_{i=1}^n \frac{|x_{\pi(i)}|}{i^{1/2}}\mbox{ and }\\ \|(x_1, x_2, x_3, \ldots)\|_2&\ =\ \left(\sum_{i}|x_{2i}|^2\right)^{1/2}.
\end{align*}
Using the same argument as in the proof of Claim \ref{sp}, we can show that $\mathcal{B} = (e_n)_{n=1}^\infty$, where $e_n = (\delta_{k,n})_{k=1}^\infty$, is a Schauder basis. Clearly, $\mathcal{B}$ is unconditional. 

\begin{claim}
The basis $\mathcal{B}$ is not democratic.  
\end{claim}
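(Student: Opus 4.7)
The plan is to exhibit two sets $A,B\subset\mathbb N$ with $|A|=|B|$ whose indicator-norm ratio blows up, exploiting the fact that the second summand $\|\cdot\|_2$ in the definition of the norm only sees coordinates at even positions.

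First I would take $A=\{1,3,5,\ldots,2m-1\}$ and $B=\{2,4,6,\ldots,2m\}$, both of cardinality $m$. Since $B$ consists entirely of even indices, $\|1_B\|\ge\|1_B\|_2=\sqrt{m}$ immediately. Conversely, every even-indexed coordinate of $1_A$ is zero, so $\|1_A\|_2=0$ and therefore $\|1_A\|=\|1_A\|_1$. The whole task reduces to showing that $\|1_A\|_1$ grows strictly slower than $\sqrt m$.

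Next I would estimate $\|1_A\|_1$ from above. By the rearrangement inequality, for each fixed $n$ the supremum
$$\sup_{\pi\in\Pi}\sum_{i=1}^n\frac{|e_{\pi(i)}^*(1_A)|}{\sqrt i}$$
is attained by sending the $m$ nonzero coordinates of $1_A$ to the indices $1,2,\ldots,\min(n,m)$, which gives the value $\sum_{i=1}^{\min(n,m)}1/\sqrt i$. Hence
$$\|1_A\|_1\ =\ \sup_{n\in\mathbb N}\frac{1}{\sqrt{s_n}}\sum_{i=1}^{\min(n,m)}\frac{1}{\sqrt i}.$$
Using $\sum_{i=1}^N 1/\sqrt i\le 2\sqrt N$ together with $\sqrt{s_n}\ge\sqrt{d_1\log n}$ for $n\ge 2$, both regimes of the supremum are dominated by $n=m$: for $n\le m$ the quantity is at most $2\sqrt{n/(d_1\log n)}$, which is increasing in $n$, and for $n\ge m$ the numerator is frozen while $\sqrt{s_n}$ keeps growing. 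Consequently $\|1_A\|_1\le C\sqrt{m/\log m}$ for some absolute $C>0$.

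Combining the two estimates yields
$$\frac{\|1_B\|}{\|1_A\|}\ \ge\ \frac{\sqrt m}{C\sqrt{m/\log m}}\ =\ \frac{\sqrt{\log m}}{C}\ \longrightarrow\ \infty,$$
as $m\to\infty$, contradicting democracy. The only mildly delicate point is the upper bound on $\|1_A\|_1$, and in particular verifying that the optimal $n$ in the supremum is of order $m$; this is a routine comparison handled by the monotonicity observations above.
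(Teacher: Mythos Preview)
Your proposal is correct and follows essentially the same route as the paper: you choose the odd-indexed and even-indexed sets of size $m$, observe that the $\ell_2$-part of the norm gives $\sqrt{m}$ on the even set and vanishes on the odd set, and then bound $\|\cdot\|_1$ of any $m$-element indicator by a constant times $\sqrt{m/\log m}$ via the rearrangement identity $\sup_{\pi}\sum_{i=1}^n|x_{\pi(i)}|/\sqrt{i}=\sum_{i=1}^{\min(n,m)}1/\sqrt{i}$. The paper simply swaps the labels $A$ and $B$ and records the $\|\cdot\|_1$ estimate a bit more tersely (writing $\sup_{n\le m}$ directly), but the argument is the same; your only extra care is the explicit monotonicity check showing the optimal $n$ is of order $m$, which is harmless and correct for $n\ge 3$.
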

\begin{proof}
Let $m\ge 5$. Choose $A = \{2, 4, \ldots, 2m\}$ and $B = \{1, 3, \ldots, 2m-1\}$. Then
$\|1_{A}\|_2 \ =\ m^{1/2}$ and $\|1_B\|_2\ =\ 0$. For any $D\subset \mathbb{N}$ with $|D| = m$, we shall estimate
$\|1_D\|_1$:
\begin{align*}
    \|1_D\|_1 \ =\ \sup_{n\le m}\frac{1}{\sqrt{s_n}} \sum_{i=1}^n \frac{1}{i^{1/2}} \ \le\ O(1)\vee \sup_{5\le n\le m}\frac{2n^{1/2}-1}{(d_1\log n)^{1/2}}\ =\ O(1)\vee \frac{2m^{1/2}-1}{(d_1\log m)^{1/2}}.
\end{align*}
Therefore, for sufficiently large $m$, 
$$\|1_A\| \ = \ m^{1/2}, \mbox{ while } \|1_B\|\ \le\ \frac{2m^{1/2}-1}{(d_1\log m)^{1/2}}.$$
Hence, $\mathcal{B}$ is not democratic. 
\end{proof}

\begin{claim}
The basis $\mathcal{B}$ is $f$-democratic.  
\end{claim}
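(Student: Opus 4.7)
The plan is to prove the stronger assertion that $\|1_{f,A}\| = \sqrt{s_m}$ for every $A \subset \mathbb{N}$ with $|A| = m$, which gives $1$-$f$-democracy directly. The intuition is that $1_{f,A}$ always has its $m$ nonzero coordinates equal to $(j^{-1/2})_{j=1}^m$, differing between choices of $A$ only in where these values are placed; the $\|\cdot\|_1$ piece takes a supremum over permutations and is therefore insensitive to the positions, while the $\|\cdot\|_2$ piece samples only the even-indexed coordinates and can only give a smaller value.

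First I would compute $\|1_{f, A}\|_1$. Writing $x = 1_{f,A}$, the nonzero magnitudes of $x$ arranged in decreasing order are exactly $a_i = i^{-1/2}$ for $1 \le i \le m$. Since both $(a_i)$ and $(i^{-1/2})$ are decreasing, the rearrangement inequality yields
$$\sup_{\pi \in \Pi} \sum_{i=1}^n \frac{|x_{\pi(i)}|}{\sqrt{i}}\ =\ \sum_{i=1}^{\min(n,m)} \frac{1}{i}\ =\ s_{\min(n,m)}.$$
Therefore $\|x\|_1 = \sup_n s_{\min(n,m)}/\sqrt{s_n}$. For $n \le m$ this ratio equals $\sqrt{s_n}$, which is maximized at $n=m$; for $n \ge m$ it equals $s_m/\sqrt{s_n}$, also maximized at $n=m$. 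Hence $\|1_{f,A}\|_1 = \sqrt{s_m}$, independently of $A$.

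Next I would bound $\|1_{f, A}\|_2$ from above. Writing $A = \{n_1 < n_2 < \cdots < n_m\}$, only those indices $j$ with $n_j$ even contribute to the $\ell_2$ sum, so
$$\|1_{f, A}\|_2^2\ =\ \sum_{\substack{1\le j\le m \\ n_j\text{ even}}} \frac{1}{j}\ \le\ \sum_{j=1}^m \frac{1}{j}\ =\ s_m,$$
giving $\|1_{f,A}\|_2 \le \sqrt{s_m}$. Combining the two estimates, $\|1_{f, A}\| = \max\{\|1_{f,A}\|_1, \|1_{f,A}\|_2\} = \sqrt{s_m}$ for every $A$ with $|A| = m$, so $\mathcal{B}$ is $1$-$f$-democratic. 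There is no serious obstacle here; the only point that requires care is the rearrangement step for $\|\cdot\|_1$, and that is immediate from the monotonicity of the two sequences involved. The design of the norm makes $\|\cdot\|_1$ symmetric in a way that precisely matches the weight profile of $f$, while $\|\cdot\|_2$ was inserted solely to break democracy without affecting $f$-democracy.
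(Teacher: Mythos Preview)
Your proof is correct and follows essentially the same approach as the paper: compute $\|1_{f,A}\|_1 = \sqrt{s_m}$ via the rearrangement inequality and bound $\|1_{f,A}\|_2 \le \sqrt{s_m}$, concluding $\|1_{f,A}\| = \sqrt{s_m}$ independently of $A$. The paper is simply terser, writing $\|1_{f,A}\|_1 = \sup_{n\le m}\frac{1}{\sqrt{s_n}}\sum_{i=1}^n\frac{1}{i}$ without spelling out the rearrangement step or the case $n>m$, but the computation is identical.
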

\begin{proof}
Let $A\subset \mathbb{N}$ with $|A| = m$. Write $A = \{n_1, n_2, \ldots, n_m\}$. We have
$$\|1_{f,A}\|_2 \ \le\ \left(\sum_{i=1}^m\frac{1}{i}\right)^{1/2} \ =\ s_m^{1/2},$$
while 
$$\|1_{f, A}\|_1 \ =\ \sup_{n\le m}\frac{1}{\sqrt{s_n}} \sum_{i=1}^n \frac{1}{i} \ =\ s_m^{1/2}.$$
Therefore, $\|1_{f, A}\| = s_m^{1/2}$ and $\mathcal{B}$ is thus, $f$-democratic. 
\end{proof}

\section{Characterization of quasi-greedy bases - Proof of Theorem \ref{m1}}
We present an useful result due to Wojtaszczyk (see Lemma \ref{l1}). For existing proofs, see \cite{W} or \cite[Lemma 2.1]{KT}. We shall improve the boundedness constant in the lemma from $C_q^4$ down to $C_q^3$.

\begin{lem}\label{l1}
Let $(e_n)_{n=1}^\infty$ be a quasi-greedy basis with constant $\mathbf C_q$. Fix $t\in (0,1]$ and
$x\in \mathbb{X}$. Let $A_1$ and $A_2$ be finite sets of positive integers such that $A_1\subset A_2$ and for all $n\in A_2$, we have $t\le |e_n^*(x)|\le 1$. Then
$$\|P_{A_1}(x)\|\ \le\ \frac{8\mathbf C_q^3}{t}\|P_{A_2}(x)\|.$$
\end{lem}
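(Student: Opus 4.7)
The plan is to apply the three inequalities of Lemma \ref{gu} in succession, using $A_1 \subset A_2$ together with the uniform bounds $t \le |e_n^*(x)| \le 1$ on $A_2$. Let $\varepsilon_n := \operatorname{sign}(e_n^*(x))$ for $n \in A_2$, and write $\varepsilon = (\varepsilon_n)$.

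First, I would apply Lemma \ref{gu}(i) to the set $A_1$ with coefficients $a_n = e_n^*(x)$. Since $|e_n^*(x)| \le 1$ for every $n \in A_2 \supset A_1$, this yields
\[
\|P_{A_1}(x)\| \;=\; \Bigl\|\sum_{n\in A_1} e_n^*(x)\,e_n\Bigr\| \;\le\; 2\mathbf C_q \max_{n\in A_1}|e_n^*(x)|\cdot\|1_{A_1}\| \;\le\; 2\mathbf C_q\,\|1_{A_1}\|.
\]
Next, since $A_1 \subset A_2$, Lemma \ref{gu}(iii) gives the comparison of signed sums
\[
\|1_{A_1}\| \;\le\; 2\mathbf C_q\,\|1_{\varepsilon A_2}\|.
\]
Finally, applying Lemma \ref{gu}(ii) to the set $A_2$ with $a_n = e_n^*(x)$ (so that the sign choice matches $\varepsilon$) and using $\min_{n \in A_2}|e_n^*(x)| \ge t$, I obtain
\[
t\,\|1_{\varepsilon A_2}\| \;\le\; \min_{n \in A_2}|e_n^*(x)|\cdot \|1_{\varepsilon A_2}\| \;\le\; 2\mathbf C_q\,\|P_{A_2}(x)\|,
\]
hence $\|1_{\varepsilon A_2}\| \le (2\mathbf C_q/t)\,\|P_{A_2}(x)\|$.

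Chaining these three estimates produces
\[
\|P_{A_1}(x)\| \;\le\; 2\mathbf C_q \cdot 2\mathbf C_q \cdot \frac{2\mathbf C_q}{t}\,\|P_{A_2}(x)\| \;=\; \frac{8\mathbf C_q^3}{t}\,\|P_{A_2}(x)\|,
\]
which is the stated bound. There is no serious obstacle here: once one recognizes that Lemma \ref{gu} is tailor-made to pass between a signed coefficient sum $P_{A}(x)$ and the ``flat'' signed sum $1_{\varepsilon A}$ at a cost of a factor $2\mathbf C_q$ (and to shrink the support monotonically at the same cost), the improvement from $\mathbf C_q^4$ to $\mathbf C_q^3$ follows automatically because we use the flat-sum comparison only once as a bridge between $A_1$ and $A_2$ rather than twice.
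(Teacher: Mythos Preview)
Your proof is correct and follows essentially the same argument as the paper: both apply the three parts of Lemma~\ref{gu} in the order (i), (iii), (ii), passing from $P_{A_1}(x)$ to $1_{A_1}$, then to $1_{\varepsilon A_2}$, and finally to $P_{A_2}(x)$. The paper's proof is simply a more compressed one-line version of the chain you wrote out explicitly.
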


\begin{proof}
Let $\varepsilon \equiv (\text{sign}(e_n^*(x))$. Using Lemma \ref{gu}, we obtain
\begin{equation*}
    \|P_{A_1}(x)\| \ \le\ 2\mathbf C_q\|1_{A_1}\| \ \le\ 4\mathbf C_q^2 \frac{1}{t} t\|1_{\varepsilon A_2}\| \ \le\ \frac{8\mathbf C_q^3}{t} \|P_{A_2}(x)\|.
\end{equation*}
\end{proof}

\begin{lem}\label{e110802}
Let $\mathcal{B}$ be a quasi-greedy basis in a Banach space $\mathbb{X}$ with constant $\mathbf C_q$. Fix $x\in \mathbb{X}$ and $\varepsilon > 0$. 
\begin{enumerate}
\item[(a)] There exists $N\in \mathbb{N}$ such that for all $m\ge N$, 
$$\|x-G_m(x)\| \ <\ \varepsilon.$$
\item[(b)] For any sequence of non-negative real numbers $(\alpha_i)_{i\ge 1}$, if $\lim_{i\rightarrow \infty}\alpha_i = 0$, then 
$$\lim_{i\rightarrow \infty}\sum_{n: |e_n^*(x)|\ge \alpha_i} e_n^*(x)e_n  = x.$$
\item[(c)] For any sequence of non-negative real numbers $(\alpha_i)_{i\ge 1}$, if $\lim_{i\rightarrow \infty}\alpha_i = 0$, then 
$$\lim_{i\rightarrow \infty}\sum_{n: |e_n^*(x)| > \alpha_i} e_n^*(x)e_n  = x.$$
\end{enumerate}
\end{lem}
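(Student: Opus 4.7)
The plan is to establish (a) first---this is where the substantive content lies---and then to deduce (b) and (c) almost immediately by recognising the partial sums in question as greedy sums of growing order.

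The main obstacle in (a) is that $G_m(x)$ denotes an \emph{arbitrary} greedy sum of order $m$, not the natural one, and ties in $(|e_n^*(x)|)_n$ allow many distinct choices of $\Gamma := \supp G_m(x)$. The plan is to sandwich any such $\Gamma$ between two canonical sets determined only by $m$. Define $\alpha_m := \min_{n\in\Gamma}|e_n^*(x)|$; a first routine check shows that $\alpha_m$ depends only on $m$, since every order-$m$ greedy set must contain all indices whose $|e_n^*(x)|$-value strictly exceeds the common threshold. Then set $A_m := \{n : |e_n^*(x)| > \alpha_m\}$ and $B_m := \{n : |e_n^*(x)| \ge \alpha_m\}$, so that $A_m \subset \Gamma \subset B_m$ and the finite ``tie layer'' $B_m\setminus A_m$ consists of indices with $|e_n^*(x)|$ of magnitude exactly $\alpha_m$. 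Because all ambiguity is confined to this tie layer, $P_{A_m}(x) = \mathcal{G}_{|A_m|}(x)$ and $P_{B_m}(x) = \mathcal{G}_{|B_m|}(x)$ are both \emph{natural} greedy sums, so quasi-greediness yields $P_{A_m}(x), P_{B_m}(x) \to x$ as $m \to \infty$ (under the assumption that $\supp(x)$ is infinite; the finite-support case is trivial).

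To bridge from $P_{A_m}(x)$ to $P_\Gamma(x)$, I would write $P_\Gamma(x) - P_{A_m}(x) = P_{\Gamma\setminus A_m}(x)$, observe that $\Gamma\setminus A_m \subset B_m\setminus A_m$, and apply Lemma \ref{l1} to the rescaled vector $\alpha_m^{-1}x$---whose coefficients on the tie layer have magnitude exactly $1$---with $t=1$. Scaling back produces
$$\|P_{\Gamma\setminus A_m}(x)\|\ \le\ 8\mathbf C_q^3\|P_{B_m\setminus A_m}(x)\|\ =\ 8\mathbf C_q^3\|P_{B_m}(x)-P_{A_m}(x)\|,$$
whose right-hand side tends to $0$ with a bound depending on $m$ alone, not on $\Gamma$. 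A single application of the triangle inequality then furnishes the required $N$, uniformly over all greedy sums of order $\ge N$.

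For (b) and (c), I let $\Gamma_i := \{n : |e_n^*(x)| \ge \alpha_i\}$ or $\{n : |e_n^*(x)| > \alpha_i\}$, respectively. In both cases $\min_{n\in\Gamma_i}|e_n^*(x)| \ge \max_{n\notin\Gamma_i}|e_n^*(x)|$, so $P_{\Gamma_i}(x)$ is a greedy sum of order $|\Gamma_i|$; and because $|e_n^*(x)|\to 0$, each $\Gamma_i$ is finite when $\alpha_i > 0$, while $|\Gamma_i|\to\infty$ as $\alpha_i\to 0^+$ (unless $\supp(x)$ is finite, in which case $\Gamma_i$ eventually equals $\supp(x)$ and the statement is immediate). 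Applying (a) then delivers $P_{\Gamma_i}(x)\to x$, completing both parts.
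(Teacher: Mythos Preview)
Your proof is correct, and parts (b) and (c) match the paper's treatment essentially verbatim (recognise the level sets as greedy sums of order tending to infinity, then invoke (a)). Part (a), however, follows a genuinely different route. The paper argues via the Schauder partial sums: choose $M$ with $\|x-S_M(x)\|<\varepsilon/(2\mathbf C_q)$, observe that any sufficiently large greedy set $A$ contains $\{1,\ldots,M\}\cap\supp(x)$, and then note that $P_A(x)-S_M(x)=P_{A\setminus\{1,\ldots,M\}}(x)$ is a greedy sum of the tail $x-S_M(x)$, hence bounded by $\mathbf C_q\|x-S_M(x)\|$. Your argument instead stays entirely within the greedy framework: the sandwich $A_m\subset\Gamma\subset B_m$, the identification of $P_{A_m}(x)$ and $P_{B_m}(x)$ as \emph{natural} greedy sums (so that the defining convergence $\mathcal G_k(x)\to x$ applies directly), and Lemma~\ref{l1} to absorb the tie layer. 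The paper's approach is a touch more elementary---it avoids Lemma~\ref{l1} and uses only the uniform bound $\|G_m(y)\|\le\mathbf C_q\|y\|$---while yours makes the source of non-uniqueness (the tie layer $B_m\setminus A_m$) completely transparent and never leaves the greedy hierarchy for the ordered partial sums. Incidentally, since every coefficient on $B_m\setminus A_m$ equals $\alpha_m$ exactly, you could replace the appeal to Lemma~\ref{l1} by Lemma~\ref{gu}(iii) and obtain the sharper constant $2\mathbf C_q$ in place of $8\mathbf C_q^3$.
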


\begin{proof}
We prove (a). Choose $M\in\mathbb{N}$ such that $\|S_M (x) - x\| < \varepsilon/(2\mathbf C_q)$, where $S_M$ is the partial sum projection of order $M$. We claim that there exists $N_0\in\mathbb{N}$ such that for all $m\ge N_0$, if $P_A(x)$ is a greedy sum of order $m$, then $P_{A}(x) - S_M(x) = P_{A\backslash \{1,2,\ldots, M\}}(x)$. Indeed, let $$\alpha = \min_{\substack{1\le n\le M\\ e^*_n(x)\neq 0}}|e^*_n(x)| > 0\mbox{ and }B = \{n: |e_n^*(x)| \ge \alpha\}.$$
Then any greedy set of order at least $|B|$ must have $\{1\le n\le M: |e^*_n(x)| > 0\}$ as a subset. Set $N = N_0 = |B|$.

Let $m\ge N$ and consider a greedy sum $G_m(x) := P_A(x)$. We have
\begin{align*}
    \|x-G_m(x)\|&\ \le\ \|x-S_M(x)\| + \|S_M(x) - P_A(x)\|\\
    &\ =\ \|x-S_M(x)\| + \|P_{A\backslash\{1,2,\ldots, M\}}(x)\|\\
    &\ <\ \varepsilon/2 + \mathbf C_q\|x-S_M(x)\|\\
    & \mbox{ since }P_{A\backslash\{1,2, \ldots M\}}(x) \mbox{ is a greedy sum of } x- S_M(x)\\
    & \ <\ \varepsilon.
\end{align*}

We prove (b). Without loss of generality, we can assume that $\alpha_i > 0$ for all $i$. 

Case 1: If $x$ is finitely supported, then set $\alpha = \min_{n\in \supp (x)} |e^*_n(x)|$. There exists $i_0$ such that for all $i\ge i_0$, we have $\alpha_i < \alpha$. Hence, for all $i\ge i_0$, $\sum_{n: |e^*_n(x)|\ge \alpha_i} e^*_n(x)e_n  = x$. 

Case 2: $x$ is infinitely supported. Let $\varepsilon > 0$. Let $N$ be as in part (a). Choose $\delta > 0$ such that $|\{n: |e^*_n(x)| \ge \delta\}| > N$. Let $i_0$ be such that for all $i\ge i_0$, we have $\alpha_i < \delta$. Then for each $i\ge i_0$, 
$\sum_{n: |e^*_n(x)|\ge \alpha_i}e^*_n(x)e_n$ is a greedy sum of order at least $N$. By part (a), 
$$\left\|x-\sum_{n: |e^*_n(x)|\ge \alpha_i}e^*_n(x)e_n\right\| < \varepsilon.$$

The proof of (c) is similar to the proof of (b).
\end{proof}

\begin{proof}[Proof of Theorem \ref{m1} item (i)]
We prove the forward implication. Let $\mathbf C_q$ be the quasi-greedy constant. 
Let $x\in \mathbb{X}$ and $G^{a, b, t}_m(x) := P_G(x)$, where $|G| = m$, be an $(a, b, t)$-weak greedy sum of $x$. Pick $E\subset G$ with $|E| < a$ and $F\subset \mathbb{N}\backslash G$ with $|F| < b$ such that $P_{G\backslash E}(x)$ is a $t$-weak greedy set of $x-P_{E\cup F}(x)$. By Theorem \ref{WKT}, there exists a constant $C(t)$ verifying
$$\|x-P_{G\backslash E}(x)\|\ \le\ C(t)\|x-P_{E\cup F}(x)\|.$$
Therefore,
\begin{align*}
    \|x-P_G(x)\|&\ \le\ \|x-P_{G\backslash E}(x)\| + \|P_E(x)\|\\
    &\ \le\ C(t)\|x-P_{E\cup F}(x)\| + |E|\mathbf k\|x\|, \mbox{ where }\mathbf k = \sup_n \|e_n\|\|e_n^*\|\\
    &\ \le\ C(t)(\|x\| + (|E|+|F|)\mathbf k\|x\|) + (a-1)\mathbf k\|x\|\\
    &\ \le\ C(t)(\|x\| + (a+b-2)\mathbf k\|x\|) + (a-1)\mathbf k\|x\|\\
    &\ \le\ ((\max A-1) \cdot \mathbf k + C(t)(1+(\max A + \max B-2)\mathbf k))\|x\|.
\end{align*}
Hence, $\mathcal{B}$ has the $(A, B, t)$-quasi-greedy property.

Now, we prove the backward implication. Assume that $\mathcal{B}$ has the $(A, B, t)$-quasi-greedy property with constant $\mathbf Q$. Let $x\in \mathbb{X}$. We shall show that $\mathcal{G}_m(x)\rightarrow x$. If $x$ is finitely supported, then there is nothing to prove. Assume that $x$ is infinitely supported. Let $\mathcal{G}_m(x) = P_G (x)$ be a greedy sum of $x$ of order $m$ with $m>\max A$. Then $G\in G(x, a, b, t)$ for all $(a, b)\in A\times B$. By the $(A, B, t)$-quasi-greedy property, $\|\mathcal{G}_m(x)\|\le \mathbf Q\|x\|$, implying quasi-greediness.\end{proof}

\begin{proof}[Proof of Theorem \ref{m1} item (ii)] Assume that $\mathcal{B}$ is quasi-greedy with constant $\mathbf C_q$. Consider a sequence of $(a_m, 1, t)$-weak greedy sums $(G_m^{a_m, 1, t}: = P_{\Lambda_m}(x))_{m=\max A}^\infty$, where $|\Lambda_m|\rightarrow \infty$. Let $\alpha_m: = \max\{|e_n^*(x)|: n\notin \Lambda_m\}$. It must be that $\lim_{m\rightarrow\infty} \alpha_m = 0$; otherwise, there exists an $\varepsilon > 0$ such that $\alpha_m\ge \varepsilon$ infinitely often, which contradicts that $\lim_{n\rightarrow \infty}|e_n^*(x)|= 0$. 

For each $m$, define 
\begin{align*}
    B_m^1 &\ =\ \{n\, :\, |e_n^*(x)| > \alpha_m\}\mbox{ and }B_m^2 \ =\ \{n\, :\, |e_n^*(x)| \ge t\alpha_m\}, \\
    \Lambda_m^1 &\ =\ B_m^1 \backslash L_{\Lambda_m}, \mbox{ where } L_{\Lambda_m} = \{n\notin \Lambda_m\,:\, |e_n^*(x)| > \alpha_m\},\\
    \Lambda_m^2&\ =\ B_m^2 \cup K_{\Lambda_m}, \mbox{ where } K_{\Lambda_m} = \{n\in \Lambda_m\, :\, |e_n^*(x)| < t\alpha_m\}.
\end{align*}
Since $\alpha_m = \max\{|e_n^*(x)|: n\notin \Lambda_m\}$, we know that $L_{\Lambda_m} = \emptyset$. Also, $|K_{\Lambda_m}| < \max A$. Write
$$P_{\Lambda_m} (x)\ =\ P_{B_m^1}(x) + P_{(\Lambda_m\cap (B_m^2\backslash B_m^1))\cup K_{\Lambda_m}}(x).$$
By Lemma \ref{e110802}, we know that $P_{B_m^1}(x)\rightarrow x$. It remains to show that 
$$P_{(\Lambda_m\cap (B_m^2\backslash B_m^1))\cup K_{\Lambda_m}}(x)\rightarrow 0.$$
Let $\mathbf k := \sup_{n}\|e_n\|\|e_n^*\|$.
By Lemma \ref{l1} and the fact that $|K_{\Lambda_m}| < \max A$, we have  
\begin{align*}\|P_{(\Lambda_m\cap (B_m^2\backslash B_m^1))\cup K_{\Lambda_m}}(x)\|&\ \le\ \|P_{\Lambda_m\cap (B_m^2\backslash B_m^1)}(x)\| + \|P_{K_{\Lambda_m}}(x)\|\\
&\ \le\  8\mathbf C_q^3\frac{1}{t}\|P_{B_m^2\backslash B_m^1}(x)\| + \mathbf k(\max A-1)\|x-P_{B_m^2}(x)\|,
\end{align*}
which approaches $0$ due to Lemma \ref{e110802}. This completes our proof of the forward implication.

We prove the backward implication. Let $x\in \mathbb{X}$. By assumption,
$ G^{a_m,1, t}_{m}(x)\rightarrow x$
for any greedy approximation $(G^{a_m, 1, t}_m (x))_{m=1}^\infty$. Let $(\mathcal{G}_r(x) := P_{\Lambda_r}x)_{r= 1}^\infty$ be the greedy approximation of $x$. Whenever $r > \max A$, we have $\Lambda_r\in G(a_m, 1, t)$ for any $a_m\in A$. Therefore, $\mathcal{G}_r(x)\rightarrow x$, as desired. 
\end{proof}

\begin{proof}[Proof of Theorem \ref{m1} item (iii)] If $(e_n)_{n=1}^\infty$ is unconditional, then it has the $(A, B, t)$-quasi-greedy property for any $A, B\subset \mathbb{N}$.
We prove the forward implication. Assume that $A\cup B$ is infinite, and $(e_n)_{n=1}^\infty$ has the $(A, B, t)$-quasi-greedy property with constant $\mathbf Q$. Suppose, for a contradiction, that $(e_n)_{n=1}^\infty$ is conditional. Then there exists an $x\in \mathbb{X}$ and a sequence of finite sets of positive integers $(F_j)_{j=1}^\infty$ such that $\|P_{F_j}(x)\| \ge 2^j\|x\|$ and $\lim_{j\rightarrow\infty} |F_j| = \infty$.

Case 1: The set $A$ is infinite. Let $n_1 < n_2 < n_3 < \cdots $ be a sequence in $A$ such that $\lim_{j\rightarrow\infty} n_j = \infty$ and $n_j > |F_j|$. Choose $N > \max F_j$ sufficiently large such that $y:= S_N(x)$ satisfies $\|y\|/\|x\|\le 2$. Let $m_j\in \{k\le N: |e^*_k(y)| = \max_{n}|e^*_n(y)|\}$. Define
$$G_j: = \{m_j\}\cup F_j\cup A_j,$$
where $\min A_j > N$ and $|G_j| = |A_j| + |F_j\cup \{m_j\}| = n_j$. By definition, $G_j\in G(y, n_j, b, t)$ for all $b\in B$. Due to $(A, B, t)$-quasi-greediness, we have $\|P_{G_j}y\| \le  \mathbf Q\|y\|$. Therefore,
\begin{align*}
    2^j\|x\| &\ \le\ \|P_{F_j}(x)\|\ =\ \|P_{F_j}(y)\|\ \le \ (2\mathbf K_b+1)\|P_{F_j\cup \{m_j\}}(y)\|\\
    &\ = \  (2\mathbf K_b+1)\|P_{G_j}(y)\| \ \le\ (2\mathbf K_b+1)\mathbf Q\|y\| \ \le\ 2(2\mathbf K_b+1)\mathbf Q\|x\|.
\end{align*}
Letting $j\rightarrow\infty$, we have a contradiction. 

Case 2: The set $B$ is infinite and $\min A = s$. Let $n_1 < n_2 < n_3 < \cdots$ be a sequence in $B$. Pick $j_0$ such that for all $j\ge j_0$, $|F_j| > s$. Fix $p\ge j_0$ and choose $N > \max F_j$ sufficiently large such that $y:= S_N(x)$ satisfies $\|y\|/\|x\|\le 2$. Choose $n_k$ such that $N<n_k$. Then
$F_p\in G(y, s, n_k, t)$, since $s\le |F_p|$ and 
$|e_{\ell(n_k)}^*(y, F_p)| = 0$. By $(A,B, t)$-quasi-greediness, 
we have
\begin{align*}
    2^p\|x\| \ \le\ \|P_{F_p}(x)\| \ =\ \|P_{F_p} (y)\|\ \le\ \mathbf Q\|y\|\ \le \ 2\mathbf Q\|x\|. 
\end{align*}
Letting $p\rightarrow\infty$, we have a contradiction. 
This completes our proof. 
\end{proof}

\section{Future research}
We would like to discuss two open questions for future studies:
\begin{enumerate}
    \item Can we fully characterize functions $f$ that make \eqref{e110902} equivalent to greediness. Theorem \ref{mm1} shows that all regular functions do so, while Section \ref{expo} shows that sparse functions do not. Unfortunately, the regular functions and sparse functions do not cover all possible functions from $\mathbb{N}$ to $\mathbb{R}$. An example of functions that belong to neither class is $f(n) = 1/n^p$ for any fixed $p> 0$. 
    \item Dilworth, Kalton, and D. Kutzarova \cite{DKK} introduced the concept of semi-greedy bases and showed that semi-greediness and almost greediness are equivalent in Banach spaces with finite cotype. Bern\'{a} \cite{B2} strengthened the result by showing the equivalence for general Banach spaces. As a result, our Theorem \ref{m2} also characterizes semi-greediness. However, can we characterize semi-greediness using $\mathscr{C}\mathcal{G}_m(x)$ and $\mathcal{D}^f_m(x)$? Here $\mathscr{C}\mathcal{G}_m(x)$ is the so-called \textbf{Thresholding Chebyshev greedy sum} (see \cite[page 73]{DKK}.) More specifically, is it possible to put some conditions on $f$ such that 
    $$\|x-\mathscr{C}\mathcal{G}_m(x)\|\ \le\ C_f\mathcal{D}_m^f(x)$$
    implies semi-greediness? 
\end{enumerate}

\ \\
\end{document}